\newtheorem{thm}{Theorem}[section]
\newtheorem{defn}[thm]{Definition}
\newtheorem{corollary}[thm]{Corollary}
\newtheorem{lemma}[thm]{Lemma}
\newtheorem{remark}[thm]{Remark}
\newtheorem{assumption}[thm]{Assumption}
\newcommand\E{{\mathbb E}}
\newcommand\bP{\mathbb{P}}
\newcommand\bR{\mathbb{R}}
\newcommand\bH{\mathbb{H}}
\newcommand\bZ{\mathbb{Z}}
\newcommand\bD{\mathbb{D}}
\newcommand\bS{\mathbb{S}}
\newcommand\bN{\mathbb{N}}
\newcommand\cB{\mathcal{B}}
\newcommand\cD{\mathcal{D}}
\newcommand\cF{\mathcal{F}}
\newcommand\cH{\mathcal{H}}
\newcommand\cP{\mathcal{P}}
\newcommand\cO{\mathcal{O}}
\newcommand\frH{\mathfrak{H}}
\newcommand{\nnrm}[2]{\ensuremath{\| #1 \|_{#2}}}           
\newcommand{\gnnrm}[2]{\ensuremath{\big\| #1 \big\|_{#2}}}  
\newcommand{\sgnnrm}[2]{\ensuremath{\Big\| #1 \Big\|_{#2}}} 
\newcommand{\nrklam}[1]{(#1)} 
\newcommand{\rklam}[1]{\left(#1\right)}                     
\newcommand{\grklam}[1]{\big(#1\big)}                       
\newcommand{\sgrklam}[1]{\Big(#1\Big)}                      
\newcommand{\rrklam}[1]{\Bigg(#1\Bigg)}
\newcommand{\ggklam}[1]{\big\{#1\big\}}                     
\newcommand{\sggklam}[1]{\Big\{#1\Big\}}                    
\newcommand{\neklam}[1]{[#1]}                   
\newcommand{\geklam}[1]{\big [#1 \big ]}                    
\newcommand{\sgeklam}[1]{\Big [#1 \Big ]}                   
\newcommand{\reklam}[1]{\Bigg [#1 \Bigg ]}                  
\newcommand{\wP}{\mathds{P}}
\newcommand{\ijk}{\ensuremath{{i,j,k}}}                             
\newcommand{\jk}{\ensuremath{{j,k}}}                               
\newcommand{\supp}{\ensuremath{\mathop{\operatorname{supp}}}}      
\newcommand{\gdomain}{\ensuremath{G}}
\newcommand{\domain}{\ensuremath{\mathcal{O}}}   
\newcommand{\domainc}{\ensuremath{M}}
\newcommand{\laplacedwp}{\ensuremath{\Delta^D_{p,w}}}              
\newcommand{\laplacedw}{\ensuremath{\Delta^D_{2,w}}} 
\title{
On the $L_q(L_p)$-regularity and Besov smoothness of stochastic parabolic equations on bounded Lipschitz domains\thanks{This work has been supported by the Deutsche Forschungsgemeinschaft (DFG, grants DA 360/13-1, DA 360/13-2, DA 360/11-1, DA 360/11-2,  SCHI 419/5-1, SCHI 419/5-2) and a doctoral scholarship of the Philipps-Universit\"{a}t Marburg. The research of the third author has been also supported by Basic Science Research Program through the National Research Foundation of Korea (NRF) funded by the Ministry of Education, Science and Technology (2011-0005597).}
}
\author{Petru~A.~Cioica, Kyeong-Hun~Kim,  Kijung~Lee,  Felix~Lindner}
\date{December 4, 2012}
\begin{document}
\maketitle

\begin{abstract}
We investigate the regularity of linear stochastic parabolic equations with zero Dirichlet boundary condition on bounded Lipschitz domains $\mathcal{O}\subseteq \mathbb{R}^d$ with both theoretical and numerical purpose. We use N.V. Krylov's framework of stochastic parabolic weighted Sobolev spaces $\mathfrak{H}^{\gamma,q}_{p,\theta}(\mathcal{O},T)$. The summability parameters $p$ and $q$ in space and time may differ. Existence and uniqueness of solutions in these spaces is established and the H\"older regularity in time is analysed. Moreover, we prove a general embedding of weighted $L_p(\mathcal{O})$-Sobolev spaces into the scale of Besov spaces $B^\alpha_{\tau,\tau}(\mathcal{O})$, $1/\tau=\alpha/d+1/p$, $\alpha>0$.
This leads to a H\"older-Besov regularity result for the solution process.
The regularity in this Besov scale determines the order of convergence that can 
be achieved by certain nonlinear approximation schemes.
\end{abstract}
\bigskip
\textbf{Keywords:} Stochastic partial differential equations, Lipschitz domain, $L_q(L_p)$-theory, weighted Sobolev spaces, Besov spaces, quasi-Banach spaces, embedding theorems, H{\"o}lder regularity in time, nonlinear approximation, wavelets, adaptive numerical methods, square root of Laplacian operator. 
\\[.5em]
\textbf{AMS 2010 Subject Classification:} Primary: 60H15; secondary: 46E35, 35R60. 

\section{Introduction}\label{Introduction}

Let $\domain\subseteq\mathbb R^d$ be a bounded Lipschitz domain,
$T\in(0,\infty)$ and let
$(w^k_t)_{t\in[0,T]}$, $k\in\mathbb N$, be independent
one-dimensional standard Wiener processes defined on a probability
space $(\Omega,\mathcal F,\mathbb P)$. We are interested in the
regularity of the solutions to parabolic stochastic partial
differential equations (SPDEs, for short) with zero Dirichlet
boundary condition of the form
\begin{equation}\label{eq:mainEq}
\left.
\begin{alignedat}{3}
du&=(a^{ij}&&u_{x^ix^j}\,+f)dt+(\sigma^{ik}u_{x^i}+g^k)dw^k_t \quad \text{ on }\Omega\times[0,T]\times\domain,\\
u&=0&&\quad\text{ on } \Omega\times(0,T]\times\partial\domain,\\
u(0,\cdot)&=u_0&&\quad\text{ on }\Omega\times\domain,
\end{alignedat}
\quad\right\}
\end{equation}
where the indices $i$ and $j$ run from $1$ to $d$ and the index $k$ runs through $\bN=\{1,2,\ldots\}$.
Here and in the sequel we use the summation convention on the repeated indices $i,j,k$.
The coefficients $a^{ij}$ and $\sigma^{ik}$ depend on $(\omega,t)\in\Omega\times[0,T]$.
The force terms $f$ and $g^k$ depend on
$(\omega,t,x)\in\Omega\times[0,T]\times\domain$.
By the nature of the problem, in particular by the bad contribution of the infinitesimal differences of the Wiener processes, the second spatial derivatives of the solution may blow up at the boundary $\partial\domain$ even if the boundary is smooth, see, e.g., \cite{Kry1994}.
Hence, a natural way to deal with problems of type \eqref{eq:mainEq} is to consider $u$ as a stochastic process with values in weighted Sobolev spaces on $\domain$ that allow the derivatives of functions from these spaces to blow up near the boundary.
This approach has been initiated and developed by N.V.\ Krylov and collaborators, first as an $L_2$-theory for smooth domains $\domain$ (see \cite{Kry1994}), then
as an $L_p$-theory ($p\geq2$) for the half space (\cite{KryLot1999, KryLot1999b}), for smooth domains (\cite{Kim2004, KimKry2004}), and for general bounded domains allowing Hardy's inequality such as bounded Lipschitz domains (\cite{Kim2011}). Existence and uniqueness of solutions have been established within specific stochastic parabolic weighted Sobolev spaces, denoted by $\frH^{\gamma}_{p,\theta}(\domain,T)$ in \cite{Kim2011}.
These spaces consist of elements $\tilde u$ of the form $d\tilde u=\tilde fdt+\tilde g^kdw^k_t$, where $\tilde u$, $\tilde f$ and $\tilde g^k$, considered as stochastic processes with values in certain weighted $L_p(\domain)$-Sobolev spaces, are $L_p$-integrable w.r.t.\ $\wP\otimes dt$. We refer to Section~\ref{StWS-00} for the exact definition.

In this article we treat regularity issues concerning the solution $u$ of problem \eqref{eq:mainEq} which arise, besides others, in the context of adaptive numerical approximation methods.

The starting point of our considerations was the question whether we can improve the Besov regularity results in \cite{CioDahKin+2011} in time direction. In \cite{CioDahKin+2011} the spatial regularity of $u$ is measured in the scale of Besov spaces
\begin{equation}\label{NAscale}\tag{$\ast$}
B^{\alpha}_{\tau,\tau}(\domain),\quad\frac 1\tau=\frac\alpha d+\frac1 p,\quad\alpha>0,
\end{equation}
where $p\geq 2$ is fixed. Note that for $\alpha>(p-1)d/p$ the sumability parameter $\tau$ becomes less than one, so that in this case $B^\alpha_{\tau,\tau}(\domain)$ is not a Banach space but a quasi-Banach space. It is a known result from approximation theory that the smoothness of a target function $f\in L_p(\domain)$ within the scale \eqref{NAscale} determines the rate of convergence that can be achieved by adaptive and other nonlinear approximation methods if the approximation error is measured in $L_p(\domain)$;
see \cite[Chapter 4]{Coh2003}, \cite{DeV1998} or the introduction of \cite{CioDahKin+2011}.
Based on the $L_p$-theory in \cite{Kim2011}, it is shown in \cite{CioDahKin+2011} that the solution $u$ to problem \eqref{eq:mainEq} satisfies
\begin{equation}\label{oldBesovResult}
u\in L_\tau(\Omega\times[0,T],\cP,\bP\otimes dt;B^{\alpha}_{\tau,\tau}(\domain)),\quad\frac 1\tau=\frac\alpha d+\frac1 p,
\end{equation}
for certain $\alpha>0$ depending on the smoothness of $u_0$, $f$ and $g^k$, $k\in\bN$.
In general, the spatial regularity of $u$ in the Sobolev scale $W^s_p(\domain)$, $s\geq0$, which determines the order of convergence for uniform approximation methods in $L_p(\domain)$, is strictly less than the spatial regularity of $u$ in the scale \eqref{NAscale}.
It can be due to, e.g., the irregular behaviour of the noise at the boundary or the irregularities of the boundary itself; see \cite[Chapter 4]{Lin2011} for the latter case.
This justifies the use of nonlinear approximation methods such as adaptive wavelet methods for the numerical treatment of SPDEs, cf. \cite{CioDahDoe+2012,CioDahDoe+2012b}.
The proof of \eqref{oldBesovResult} relies on characterizations of Besov spaces by wavelet expansions and on weighted Sobolev norm estimates for $u$, resulting from the solvability of the problem \eqref{eq:mainEq} within the spaces $\frH^\gamma_{p,\theta}(\domain,T)$.

An obvious approach to improve \eqref{oldBesovResult} with respect to regularity in time is to try to combine the existing H{\"o}lder estimates in time for the elements of the spaces $\frH^\gamma_{p,\theta}(\domain,T)$ (see \cite[Theorem 2.9]{Kim2011}) with the wavelet arguments in \cite{CioDahKin+2011}. However, it turns out that a satisfactory result requires a more subtle strategy in three different aspects.

Firstly, we need an extension of the $L_p$-theory in \cite{Kim2011} to an $L_q(L_p)$-theory for SPDEs dealing with stochastic parabolic weighted Sobolev spaces $\frH^{\gamma,q}_{p,\theta}(\domain,T)$ with possibly different summability parameters $q$ and $p$ in time and space respectively.
These spaces consist of elements $\tilde u$ of the form $d\tilde u=\tilde fdt+\tilde g^kdw^k_t$, where $\tilde u$, $\tilde f$ and $\tilde g^k$, considered as stochastic processes with values in suitable weighted $L_p(\domain)$-Sobolev spaces, are $L_q$-integrable w.r.t.\ $\wP\otimes dt$. Such an extension is needed to obtain better H{\"o}lder estimates in time in a second step.
Satisfactory existence an uniqueness results concerning solutions in the spaces $\frH^{\gamma,q}_{p,\theta}(\domain,T)$ have been established in \cite{Kim2009} for domains $\domain$ with $C^1$-boundary. Unfortunately, the techniques used there do not work on general Lipschitz domains. 
Also, the $L_q(L_p)$-results that have been obtained in \cite{NeeVerWei2012} within the semigroup approach to SPDEs do not directly suit our purpose: On the one hand, for general Lipschitz domains $\cO$ the domains of the fractional powers of the leading linear differential operator cannot be characterized in terms of Sobolev or Besov spaces as in the case of a smooth domains $\cO$; see, e.g., the introduction of \cite{CioDahKin+2011} for details. On the other hand, even in the case of a smooth domain $\cO$ we need regularity in terms \emph{weighted} Sobolev spaces to obtain the optimal regularity in the scale \eqref{NAscale}.

Secondly, once we have established the solvability of SPDEs within the spaces $\frH^{\gamma,q}_{p,\theta}(\domain,T)$, we have to exploit the $L_q(L_p)$--regularity of the solution and derive improved results on the H\"older regularity in time for large $q$. 
For $\domain=\bR^d_+$ this has been done by Krylov \cite{Kry2001}. It takes quite delicate arguments to apply these results to the case of bounded Lip\-schitz domains via a boundary flattening argument.

Thirdly, in order to obtain a reasonable H{\"o}lder-Besov regularity result, it is necessary to generalize the wavelet arguments applied in \cite{CioDahKin+2011} to a wider range of smoothness parameters. This requires more sophisticated estimates.

In this article we tackle and solve the tasks described above. We organize the article as follows.
In Section~2 we recall the definition and basic properties of the (deterministic) weighted Sobolev spaces
$H^\gamma_{p,\theta}(\gdomain)$ introduced in \cite{Lot2000} (see also \cite[Chapter 6]{Tri1995}) on general domains $\gdomain\subseteq\bR^d$ with non-empty boundary.
In Section~3 we give the definition of the spaces $\frH^{\gamma,q}_{p,\theta}(\gdomain,T)$ and
specify the concept of a solution for equations of type \eqref{eq:mainEq} in these spaces. Moreover, we show that if we have a solution $u\in\frH^{\gamma,q}_{p,\theta}(\gdomain,T)$ with low regularity $\gamma\geq 0$, but $f$ and the $g^k$'s have high $L_q(L_p)$-regularity, then we can lift up the regularity of the solution (Theorem~\ref{thm:liftReg}).
In this sense the spaces $\frH^{\gamma,q}_{p,\theta}(\gdomain,T)$ are the right ones for our regularity analysis of SPDEs.

Section 4 is devoted to the solvability of Eq.\ \eqref{eq:mainEq} in $\frH^{\gamma,q}_{p,\theta}(\domain,T)$, $\domain\subseteq\bR^d$ being a bounded Lipschitz domain. The focus lies on the case $q>p\geq2$ and we restrict our considerations to equations with additive noise, i.e. $\sigma^{ik}\equiv0$.
In Subsection~\ref{subsec:LqLp_heat} we consider equations on domains with small Lipschitz constants and derive a result for general integrability parameters $q\geq p\geq2$ (Theorem~\ref{thm:LpLq}). We use an $L_q(L_p)$-regularity result for deterministic parabolic equations from \cite{DonKim2011} and an estimate for stochastic integrals in UMD spaces from \cite{NeeVerWei2007} to obtain a certain low $L_q(L_p)$-regularity of the solution. Then the regularity is lifted up with the help of Theorem~\ref{thm:liftReg}.
In Subsection~\ref{subsec:LqLp_heat}, we consider the stochastic heat equation on general bounded Lipschitz domains. Here we use the results from \cite{NeeVerWei2012} on maximal $L_q$-regularity of stochastic evolution equations (see also \cite{NeeVerWei2012b} and \cite{NeeVerWei2007}) to derive existence and uniqueness of a solution with low regularity. A main ingredient will be to prove that the domain of the square root of the weak Dirichlet Laplacian on $L_p(\domain)$ coincides with the closure of the test functions $C^\infty_0(\domain)$ in the $L_p(\domain)$-Sobolev space of order one (Lemma~\ref{thm:laplacedwpsqrt}). This stays true only for a certain range of $p\in[2,p_0)$ with $p_0>3$. Thus, so does our result (Theorem \ref{thm:heat_LqLp}).
In a second step, we again lift up the regularity by using Theorem~\ref{thm:liftReg}.
In both settings we derive suitable a-priori estimates.

In Section~\ref{StWS} we present our result on the H{\"o}lder regularity in time of the elements of $\frH^{\gamma,q}_{p,\theta}(\domain,T)$ (Theorem~\ref{thm:mainHoelder}). It is an extension of the H{\"o}lder estimates in time for the elements of
$\frH^{\gamma,q}_{p,\theta}(T)=\frH^{\gamma,q}_{p,\theta}(\bR^d_+,T)$
in \cite{Kry2000} to the case of bounded Lipschitz domains.
The implications for the H\"older regularity of the solutions of SPDEs are described in Theorem~\ref{thm:Hoelder_SPDEs}.

In Section~\ref{Bes-WS} we pave the way for the analysis of the spatial regularity of the solutions of SPDEs in the scale \eqref{NAscale}.
We  discuss the relationship between the weighted Sobolev spaces $H^\gamma_{p,\theta}(\domain)$ and Besov spaces. Our main result in this section, Theorem \ref{thm:WSBes-ptau}, is a general embedding of the spaces $H^\gamma_{p,d-\nu p}(\domain)$, $\gamma,\nu>0$, into the Besov scale \eqref{NAscale}. Its proof is an extension of the wavelet arguments in the proof of \cite[Theorem 3.1]{CioDahKin+2011}, where only integer valued smoothness parameters $\gamma$ are considered. It can also be seen as an extension of and a supplement to the Besov regularity results for deterministic
elliptic equations in \cite{DahDeV1997} and \cite{Dah1998,Dah1999,Dah2002,DahSic2008}.
To the best of our knowledge, no such general embedding has been proven before.
In the course of the discussion we also enlighten the fact that, for the relevant range of parameters $\gamma$ and $\nu$, the spaces $H^\gamma_{p,d-\nu p}(\domain)$ act like Besov spaces $B^{\gamma\wedge\nu}_{p,p}(\domain)$ with zero trace on the boundary (Remark \ref{rem:Dirichlet}).

In Section~\ref{RegHB} the results of the previous sections are combined in order to determine the H{\"o}lder-Besov regularity of the elements of the stochastic parabolic spaces $\frH^{\gamma,q}_{p,\theta}(\domain,T)$ and of the solutions of SPDEs within these spaces.
The related result in \cite{CioDahKin+2011} is significantly improved in several aspects; see Remark~\ref{rem:rel_LpLp} for a detailed comparison.
We obtain an estimate of the form
\begin{equation*}
\E\|u\|^q_{C^\kappa([0,T];B^\alpha_{\tau,\tau}(\domain))}
\leq C \nnrm{u}{\frH^{\gamma,q}_{p,\theta}(\domain,T)}^q,\quad\frac 1\tau=\frac\alpha d+\frac1 p,
\end{equation*}
for certain $\alpha$ depending on the smoothness and weight parameters $\gamma$ and $\theta$ and for certain $\kappa$ depending on
$q$ and $\alpha$ (Theorem~\ref{thm:HoeBes}).
Using the a-priori estimates from Section~\ref{SPDEs}, the right hand side of the above inequality can be estimated by suitable norms of $f$ and $g$ if $u$ is the solution to the corresponding SPDE (Theorem~\ref{thm:HoelderBesov_SPDEs}).

Let us mention the related work \cite{AimGom2012} on the Besov regularity  for the deterministic heat equation. The authors study the regularity of temperatures in terms of anisotropic Besov spaces of type
$B^{\alpha/2,\alpha}_{\tau,\tau}((0,T)\times\domain)$,
$1/\tau=\alpha/d+1/p$.
However, the range of admissible values for the parameter $\tau$ is a priori restricted to $(1,\infty)$, so that $\alpha$ is always less than $d(1-1/p)$. In our article the parameter $\tau$ in \eqref{NAscale} may be any positive number, including in particular the case where $\tau$ is less than $1$ and where $B^\alpha_{\tau,\tau}(\domain)$ is not a Banach space but a quasi-Banach space.

\vspace{0.2cm}
\noindent\textbf{Notation and Conventions.} Throughout this
paper, $\domain$ always denotes a bounded Lipschitz domain in
$\bR^d$, $d\geq 1$, as specified in Definition~\ref{domain} below. 
General subsets of $\bR^d$ are denoted by $\gdomain$. We write $\partial\gdomain$ for their boundary (if it is not empty) and $\gdomain^\circ$ for the interior.  $\bN:=\{1,2,\ldots\}$ denotes the set of strictly positive integers whereas $\bN_0:=\bN\cup\{0\}$.  Let $(\Omega,\cF,\wP)$
be a complete probability space and $\{\cF_{t},t\geq0\}$ be an
increasing filtration of $\sigma$-fields $\cF_{t}\subset\cF$, each
of which contains all $(\cF,\bP)$-null sets. By  $\cP$ we denote the
predictable $\sigma$-field generated by $\{\cF_{t},t\geq0\}$ and  we
assume that $\{(w^{1}_{t})_{t\in[0,T]}, (w^{2}_{t})_{t\in[0,T]},\ldots\}$ are independent
one-dimensional Wiener processes w.r.t.\ $\{\cF_{t},t\geq0\}$.
For $\kappa\in(0,1)$ and a quasi-Banach space $(X,\|\cdot\|_X)$ we
denote by $C^\kappa([0,T];X)$ the H{\"o}lder space of continuous
$X$-valued functions on $[0,T]$ with finite norm
$\nnrm{\cdot}{C^\kappa([0,T];X)}$ defined by
\begin{align*}
[u]_{C^\kappa([0,T];X)}&:=\sup_{s,t\in[0,T]}\frac{\|u(t)-u(s)\|_X}{|t-s|^\kappa},\\
\nnrm{u}{C([0,T];X)}&:=\sup_{t\in[0,T]}\|u(t)\|_X,\\
\nnrm{u}{C^\kappa([0,T];X)}&=\nnrm{u}{C([0,T];X)}+[u]_{C^\kappa([0,T];X)}.
\end{align*}
For $1<p<\infty$, $L_p(A,\Sigma,\mu;X)$ denotes the space of $\mu$-strongly measurable and $p$-Bochner integrable functions with values in $X$ on a $\sigma$-finite measure space $(A,\Sigma,\mu)$, endowed with the usual $L_p$-Norm. 
We write $L_p(\gdomain)$ instead of $L_p(\gdomain,\cB(\gdomain),\lambda^d;\bR)$ if $G\in\cB(\bR^d)$, where $\cB(\gdomain)$ and $\cB(\bR^d)$ are the Borel-$\sigma$-fields on $\gdomain$ and $\bR^d$.
Recall the Hilbert space $\ell_2:=\ell_2(\bN)=\{\mathbf{a}=(\mathbf{a}^{1},\mathbf{a}^{2},\ldots):|\mathbf{a}|_{\ell_2}=(\sum_k |\mathbf{a}^{k}|^2)^{1/2}<\infty\}$ with the inner product $\langle\mathbf{a},\mathbf{b}\rangle_{\ell_2}=\sum_i \mathbf{a}^{k}\mathbf{b}^{k}$, for $\mathbf{a},\mathbf{b}\in\ell_2$.
The notation $C^\infty_0(\gdomain)$ is used for the space of infinitely differentiable test functions with compact support in a domain $\gdomain\subseteq\bR^d$.
For any distribution $f$ on $G$ and any $\varphi\in C^\infty_0(\gdomain)$, $(f,\varphi)$ denotes the application of $f$ to $\varphi$. 
Furthermore, for any multi-index $\alpha=(\alpha_1,\ldots,\alpha_d)\in\bN_0^d$, we write $D^\alpha f=\frac{\partial^{|\alpha|}f}{\partial x_1^{\alpha_1}\ldots\partial x_d^{\alpha_d}}$ for the corresponding (generalized) derivative w.r.t. $x=(x_1,\ldots,x_d)\in\gdomain$, where $|\alpha|=\alpha_1+\ldots+\alpha_d$. 
By making slight abuse of notation, for $m\in\bN_0$, we write $D^m f$ for any (generalized) $m$-th order derivative of $f$ and for the vector of all $m$-th order derivatives of $f$. E.g.\ if we write $D^m f\in X$, where $X$ is a function space on $G$, we mean $D^\alpha f\in X$ for all $\alpha\in\bN_0^d$ with $|\alpha|=m$.
We also use the notation $f_{x^i x^j}=\frac{\partial^{2}f}{\partial x^i\partial x^j},\;f_{x^i}=\frac{\partial f}{\partial x^i}$. The notation $f_x$ (respectively $f_{xx}$) is used synonymously  for $Df:= D^1f$ (respectively for $D^2f$), whereas $\nnrm{f_x}{X}:=\sum_{i} \nnrm{u_{x^i}}{X}$ (respectively $\nnrm{f_{xx}}{X}:=\sum_{i,j} \nnrm{f_{x^i x^j}}{X}$). 
Moreover, $\Delta f:=\sum_i f_{x^i x^i}$, whenever it makes sense.
Given $p\in[1,\infty)$ and $m\in\bN_0$, $W^m_p(\gdomain)$ denotes the classical Sobolev space consisting of all $f\in L_p(\gdomain)$ such that 
$
 |f|_{W^m_p(\gdomain)}:= 
\sup_{\alpha\in\bN_0^d,\,|\alpha|=m}\nnrm{D^\alpha f}{L_p(\gdomain)}$
is finite. It is normed via $\nnrm{f}{W^m_p(\gdomain)}:=\nnrm{f}{L_p(\gdomain)}+|f|_{W^m_p(\gdomain)}$.
The closure of $C^\infty_0(\domain)$ in $W^{1}_p(\domain)$ is denoted by $\mathring{W}^1_p(\domain)$ and is normed by $\nnrm{f}{\mathring{W}^1_p(\domain)}:=(\sum_{i}\nnrm{f_{x^i}}{L_p(\domain)}^p)^{1/p}$.
If we have two quasi-normed spaces $(X_i,\nnrm{\cdot}{X_i})$, $i=1,2$, $X_1\hookrightarrow X_2$ means that $X_1$ is continuously linearly embedded in $X_2$. For a compatible couple $(X_1,X_2)$ of quasi-Banach spaces,  $[X_1,X_2]_\eta$ denotes the interpolation space of exponent $\eta\in(0,1)$ arising from the complex interpolation method. In general, $N$ will denote a positive finite constant, which may differ from line to line. The notation $N=N(a_1,a_2,\ldots)$ is used to emphasize the dependence of the constant $N$ on the set of parameters $\{a_1, a_2, \ldots\}$. In general, this set will \emph{not} contain all the parameters $N$ depends on.
$A\sim B$ means that $A$ and $B$ are equivalent.

\setcounter{equation}{0}
\section{Weighted Sobolev spaces}\label{WS}

We start by recalling the definition and some basic properties of the 
(deterministic and stationary) 
weighted Sobolev spaces $H^\gamma_{p,\theta}(\gdomain)$ introduced in \cite{Lot2000}. These spaces will serve as state spaces for the solution processes $u=(u(t))_{t\in[0,T]}$ to SPDEs of type \eqref{eq:mainEq} and they will play a fundamental role in all the forthcoming sections.

For $p\in(1,\infty)$ and $\gamma \in \bR$, let
$H^{\gamma}_p:=H^{\gamma}_p(\bR^d):=(1-\Delta)^{-\gamma/2}L_p(\bR^d)$ 
be the spaces of Bessel potentials, endowed with the norm
\begin{equation*}
\|u\|_{H^{\gamma}_p}:=\|(1-\Delta)^{\gamma/2}u\|_{L_p(\bR^d)}:=\|\cF^{-1}[(1+|\xi|^2)^{\gamma/2}\cF(u)(\xi)]\|_{L_p(\bR^d)},
\end{equation*}
where $\cF$ denotes the Fourier transform. It is well known that if $\gamma$ is a nonnegative integer, then
\begin{equation*}
H^{\gamma}_p
=\ggklam{u\in L_p \, :\, D^\alpha u\in L_p \text{ for all } \alpha \in\bN_0^d \text{ with } |\alpha |\leq \gamma}.
\end{equation*}

Let $\gdomain\subseteq \bR^d$ be an arbitrary domain with non-empty boundary $\partial\gdomain$. We denote by $\rho(x):=\rho_G(x):=\text{dist}(x,\partial\gdomain)$ the distance of a point $x\in\gdomain$ to the boundary $\partial\gdomain$. Furthermore, we fix  a bounded infinitely differentiable function $\psi$ defined on $\gdomain$ such that for all $x\in\gdomain$,
\begin{equation}\label{eq:PsiDist}
\rho(x)\leq N \psi(x),
\quad \rho(x)^{m-1}|D^{m}\psi(x)|\leq N(m)<\infty  \text{ for all } m\in \bN_0,
\end{equation}
where $N$ and $N(m)$ do not depend on $x\in\gdomain$. For a detailed construction of such a function see,  e.g., \cite[Chapter 3, Section 3.2.3]{Tri1995}.
Let $\zeta\in C^{\infty}_{0}(\bR_{+})$ be a non-negative  function
satisfying
\begin{equation}\label{eq:Part_start}
\sum_{n\in\bZ}\zeta(e^{n+t})>c>0 \text{ for all } t\in\bR. 
\end{equation}
Note that any non-negative smooth function $\zeta\in C^{\infty}_0(\bR_+)$ with $\zeta>0$ on $[e^{-1},e]$ satisfies (\ref{eq:Part_start}).
For $x\in \gdomain$ and $n\in\bZ$,  define
$$
\zeta_{n}(x):=\zeta(e^{n}\psi(x)).
$$
Then, there exists $k_0>0$ such that, for all $n\in\bZ$,  $\text{supp}\,\zeta_n \subset \gdomain_n:= \{x\in \gdomain:
e^{-n- k_0}<\rho(x)<e^{-n+ k_0}\}$, i.e.,
$\zeta_n \in C^{\infty}_0(\gdomain_n)$.
Moreover, $|D^m \zeta_n(x)|\leq N(\zeta,m) e^{mn}$ for all $x\in\gdomain$ and $m \in \bN_0$, and $\sum_{n\in\bZ}\zeta_{n}(x)\geq \delta >0 \text{ for all } x\in\gdomain$.
For $p\in (1,\infty)$ and $\gamma, \theta\in\bR$, we denote by $H^{\gamma}_{p,\theta}(\gdomain)$ the space of all
distributions $u$ on $\gdomain$ such that
\begin{equation*}
\|u\|_{H^{\gamma}_{p,\theta}(\gdomain)}^{p}:= \sum_{n\in\bZ}
e^{n\theta} \|\zeta_{-n}(e^{n} \cdot)u(e^{n}
\cdot)\|^p_{H^{\gamma}_p} < \infty.
\end{equation*}
It is well-known that
\begin{equation*}
L_{p,\theta}(\gdomain):=H^0_{p,\theta}(\gdomain)=L_p(\gdomain,\rho^{\theta-d}
dx),
\end{equation*}
and that, if $\gamma$ is a positive integer,
\begin{equation*}
H^{\gamma}_{p,\theta}(\gdomain)
=
\ggklam{u\in L_{p,\theta}(\gdomain) \, : \, 
\rho^{|\alpha|} D^\alpha u\in L_{p,\theta}(\gdomain)
\text{ for all } \alpha\in\bN_0^d \text{ with } |\alpha|\leq \gamma},
\end{equation*}
\begin{equation*}
\|u\|^p_{H^{\gamma}_{p,\theta}(\gdomain)}\sim \sum_{|\alpha|\leq
\gamma}\int_{\gdomain}\big|\rho^{|\alpha|}D^{\alpha}u\big|^p \rho^{\theta-d}\,dx;
\end{equation*}
see, e.g., \cite[Proposition 2.2]{Lot2000}. This is the reason why the space $H^\gamma_{p,\theta}(\gdomain)$ is called weighted Sobolev space of order $\gamma$, with summability parameter $p$ and weight parameter $\theta$.

For $p\in(1,\infty)$ and $\gamma\in\bR$ we write $H^\gamma_p(\ell_2)$ for the collection of all sequences $g=(g^{1},g^{2},\ldots)$ of distributions on $\bR^d$ with $g^{k}\in H^\gamma_p$ for each $k\in\bN$ and
\begin{equation*}
\nnrm{g}{H^\gamma_p(\ell_2)}
:=
\nnrm{g}{H^\gamma_p(\bR^d;\ell_2)}
:=
\nnrm{|(1-\Delta)^{\gamma/2}g|_{\ell_2}}{L_p}
:=
\sgnnrm{\sgrklam{\sum_{k=1}^{\infty}|(1-\Delta)^{\gamma/2}g^{k}|^2}^{1/2}}{L_p}
<
\infty.
\end{equation*}
Analogously,  for $\theta\in\bR$, a sequence $g=(g^{1},g^{2},\ldots)$ of distributions on $\gdomain$ is in $H^\gamma_{p,\theta}(\gdomain;\ell_2)$ if, and only if, $g^{k}\in H^\gamma_{p,\theta}(\gdomain)$ for each $k\in\bN$ and
\begin{equation*}
\nnrm{g}{H^{\gamma}_p(\gdomain;\ell_2)}^p
:=
\sum_{n\in\bZ} e^{n\theta} \|\zeta_{-n}(e^{n} \cdot)g(e^{n}
\cdot)\|^p_{H^{\gamma}_p(\ell_2)}
<
\infty.
\end{equation*}

Now we present some useful properties of the space
$H^{\gamma}_{p,\theta}(\gdomain)$ taken from \cite{Lot2000}, see  also \cite{Kry1999,Kry1999b}.

\begin{lemma}\label{lem:collection}
\textbf{\textup{(i)}} The space $C^{\infty}_0(\gdomain)$ is dense in
$H^{\gamma}_{p,\theta}(\gdomain)$.

\noindent\textbf{\textup{(ii)}} Assume that $\gamma-d/p=m+\nu$ for some $m\in\bN_0$, $\nu\in (0,1]$ and that $i,j\in\bN_0^d$ are multi-indices such that $|i|\leq m$ and $|j|=m$. Then for any $u\in H^{\gamma}_{p,\theta}(\gdomain)$, we have
$$
\psi^{|i|+\theta/p}D^iu \in C(\gdomain), \quad
\psi^{m+\nu+\theta/p}D^ju\in C^{\nu}(\gdomain),
$$
$$
|\psi^{|i|+\theta/p}D^iu|_{C(\gdomain)}+[\psi^{m+\nu+\theta/p}D^ju]_{C^{\nu}(\gdomain)}\leq
N \|u\|_{ H^{\gamma}_{p,\theta}(\gdomain)}.
$$

\noindent\textbf{\textup{(iii)}} 
$u\in H^\gamma_{p,\theta}(\gdomain)$ if, and only if, $u, \psi u_x\in H^{\gamma-1}_{p,\theta}(\gdomain)$ and
$$
\|u\|_{H^{\gamma}_{p,\theta}(\gdomain)} 
\leq 
N \|\psi u_x\|_{H^{\gamma-1}_{p,\theta}(\gdomain)}
+
N \|u\|_{H^{\gamma-1}_{p,\theta}(\gdomain)}
\leq 
N \|u\|_{H^{\gamma}_{p,\theta}(\gdomain)}.
$$
Also, $u\in H^\gamma_{p,\theta}(\gdomain)$ if, and only if, $u, (\psi u)_x\in H^{\gamma-1}_{p,\theta}(\gdomain)$ and
$$
\|u\|_{H^{\gamma}_{p,\theta}(\gdomain)} 
\leq 
N \|(\psi u)_x\|_{H^{\gamma-1}_{p,\theta}(\gdomain)}
+
N \|u\|_{H^{\gamma-1}_{p,\theta}(\gdomain)}
\leq 
N \|u\|_{H^{\gamma}_{p,\theta}(\gdomain)}.
$$

\noindent\textbf{\textup{(iv)}} For any $\nu, \gamma \in \bR$,
$\psi^{\nu}H^{\gamma}_{p,\theta}(\gdomain)=H^{\gamma}_{p,\theta-p\nu}(\gdomain)$
and
$$
\|u\|_{H^{\gamma}_{p,\theta-p\nu}(\gdomain)} 
\leq 
N \|\psi^{-\nu}u\|_{H^{\gamma}_{p,\theta}(\gdomain)}
\leq
N \|u\|_{H^{\gamma}_{p,\theta-p\nu}(\gdomain)}.
$$

\noindent\textbf{\textup{(v)}} If $\gamma\in (\gamma_0,\gamma_1)$ then, for any $\varepsilon>0$, there exists a constant $N=N(\gamma_0,\gamma_1,\theta,p,\varepsilon)$, such that
\begin{align*}
\|u\|_{H^{\gamma}_{p,\theta}(\gdomain)}
&\leq 
\varepsilon \|u\|_{H^{\gamma_1}_{p,\theta}(\gdomain)}
+
N(\gamma_0,\gamma_1,\theta,p,\varepsilon)\|u\|_{H^{\gamma_0}_{p,\theta}(\gdomain)}.
\end{align*}
Also, if $\theta\in (\theta_0,\theta_1)$ then, for any $\varepsilon>0$, there exists a constant $N=N(\theta_0,\theta_1,\gamma,p,\varepsilon)$, such that
\begin{align*}
\|u\|_{H^{\gamma}_{p,\theta}(\gdomain)}
&\leq 
\varepsilon \|u\|_{H^{\gamma}_{p,\theta_0}(\gdomain)}
+
N(\theta_0,\theta_1,\gamma,p,\varepsilon)\|u\|_{H^{\gamma}_{p,\theta_1}(\gdomain)}.
\end{align*}

\noindent\textbf{\textup{(vi)}}  There exists a constant $c_0>0$ depending on $p$, $\theta$, $\gamma$ and the function $\psi$ such that, for all $c\geq c_0$, the operator $\psi^2\Delta-c$ is a homeomorphism from $H^{\gamma+1}_{p,\theta}(\gdomain)$ to $H^{\gamma-1}_{p,\theta}(\gdomain)$. 
\end{lemma}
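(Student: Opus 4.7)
The plan is to exploit the dyadic decomposition underlying the definition of $H^\gamma_{p,\theta}(\gdomain)$, reducing each of the parts (i)--(vi) to a corresponding statement about the unweighted Bessel potential space $H^\gamma_p(\bR^d)$ applied to the rescaled pieces $u_n(\cdot):=\zeta_{-n}(e^n\cdot)\,u(e^n\cdot)$. The key geometric fact I would use throughout is that on $\supp\zeta_{-n}$ one has $\psi(x)\sim e^{-n}$, and by \eqref{eq:PsiDist} the derivatives satisfy $|D^m\zeta_{-n}(x)|\leq N e^{-mn}$; thus the change of variables $x\mapsto e^n x$ converts the $\psi$-weighted derivative operators $\psi\partial_{x^i}$ into ordinary derivatives on a fixed scale, after which all estimates can be transferred from the standard spaces on $\bR^d$.

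For (i), given $u\in H^\gamma_{p,\theta}(\gdomain)$, I would consider the partial sums of $u=\big(\sum_n\zeta_{-n}\big)^{-1}\sum_n\zeta_{-n}u$, which have compact support in $\gdomain$, and approximate them by $C^\infty_0(\bR^d)$-functions exploiting density of test functions in $H^\gamma_p(\bR^d)$; convergence of the defining series then transfers the approximation to $H^\gamma_{p,\theta}(\gdomain)$. For (ii), I would apply the Sobolev embedding $H^\gamma_p(\bR^d)\hookrightarrow C^{m,\nu}$ to each $u_n$: under $x\mapsto e^n x$ the $C^{m,\nu}$-seminorm rescales with $e^{-n(m+\nu)}$ and $D^i$ contributes $e^{in}$, so combined with $\psi\sim e^{-n}$ this reproduces the weights $\psi^{|i|+\theta/p}$. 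For (iii), I would invoke the standard equivalence $\|v\|_{H^\gamma_p}\sim\|v\|_{H^{\gamma-1}_p}+\|v_x\|_{H^{\gamma-1}_p}$ piecewise: differentiation on the rescaled side produces a factor $e^n$, which paired with $\psi\sim e^{-n}$ is precisely $\psi u_x$; the variant with $(\psi u)_x$ follows by expanding $(\psi u)_x=\psi u_x+\psi_x u$ and absorbing $\psi_x u$ using (iv). Part (iv) reduces to the observation that multiplication by $\psi^\nu$ acts on each rescaled piece as multiplication by a smooth function of size $e^{-n\nu}$ with controlled derivatives, so the weighted $\ell_p$-norm shifts $\theta$ to $\theta-p\nu$.

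For (v) I would apply the classical interpolation inequality $\|v\|_{H^\gamma_p}\leq\varepsilon\|v\|_{H^{\gamma_1}_p}+N_\varepsilon\|v\|_{H^{\gamma_0}_p}$ piecewise to handle the smoothness parameter, and for the weight parameter use a Young-type inequality to split $e^{n\theta}$ between $e^{n\theta_0}$ and $e^{n\theta_1}$, exploiting that $\theta\in(\theta_0,\theta_1)$. Part (vi) is the deepest and proceeds by a perturbation argument: using (iii) I would rewrite $\psi^2\Delta u$ in terms of the natural weighted vector fields $\psi\partial_{x^i}$ plus lower-order terms, so that on each dyadic piece after rescaling the operator becomes a uniformly bounded perturbation of the standard $\Delta-c$ on $\bR^d$, which is a homeomorphism $H^{\gamma+1}_p\to H^{\gamma-1}_p$ for every $c>0$. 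Gluing the pieces back together and estimating the commutators of $\psi^2\Delta$ with the localizers $\zeta_{-n}$ by means of (v) yields invertibility once $c$ exceeds a threshold $c_0=c_0(p,\theta,\gamma,\psi)$.

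I expect (vi) to be the main obstacle: global invertibility cannot be read off from the rescaled local pictures alone, because commuting $\psi^2\Delta$ with $\zeta_{-n}$ produces lower-order cross terms that couple neighbouring scales. Controlling them requires (iii), so that derivatives are expressed in the correct weighted sense, together with (v), so that the perturbative contributions can be absorbed into $-c$ provided $c$ is sufficiently large; this is precisely where the threshold $c_0$ enters. The remaining parts are essentially scaling-and-summation arguments, but one must check that the $\ell_p$-series converge uniformly and that the equivalent norms are stable under the approximation and truncation procedures employed in (i)--(iv).
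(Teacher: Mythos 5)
The paper offers no proof of this lemma: it is imported verbatim from the cited references (see the sentence preceding the statement, which attributes it to \cite{Lot2000}, \cite{Kry1999}, \cite{Kry1999b}, in particular \cite[Proposition 2.2]{Lot2000}). Your localization--rescaling strategy --- reduce everything to $H^\gamma_p(\bR^d)$ via the pieces $\zeta_{-n}(e^n\cdot)u(e^n\cdot)$, using $\psi\sim e^{-n}$ on $\supp\zeta_{-n}$ and the derivative bounds \eqref{eq:PsiDist} --- is exactly the strategy of those references, so the approach is the right one and parts (i), (iii), (iv), (v) are essentially complete modulo routine commutator bookkeeping (which Remark~\ref{rem:EquivWS}(i) is designed to handle).

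Two steps are genuinely incomplete as written. First, in (ii) the piecewise Sobolev embedding $H^\gamma_p\hookrightarrow C^{m,\nu}$ only controls the H\"older quotient for pairs $x,y$ lying in the same dyadic shell (more precisely, with $|x-y|\lesssim\rho(x)$); for pairs in well-separated shells one has $|x-y|\gtrsim\max(\rho(x),\rho(y))$ and must instead bound the difference by the two sup-norms, trading $|x-y|^{-\nu}$ against $\psi^\nu$ and invoking the already-established $C(\gdomain)$-estimate with the exponent $m+\theta/p$. Without this case distinction the seminorm estimate does not follow. Second, in (vi) your perturbation argument, even granting that the commutator and variable-coefficient errors are absorbed for $c\geq c_0$, yields only the a priori estimate $\|u\|_{H^{\gamma+1}_{p,\theta}}\leq N\|(\psi^2\Delta-c)u\|_{H^{\gamma-1}_{p,\theta}}$, hence injectivity and closed range; surjectivity does not come for free. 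You need either the method of continuity starting from an operator known to be onto (e.g.\ the case $p=2$, or the model operator on the half space), or an explicit approximate inverse $R_c=\sum_n\eta_{-n}(a_n\Delta-c)^{-1}\zeta_{-n}$ glued from the local resolvents, with $(\psi^2\Delta-c)R_c=I+E_c$ and $\|E_c\|<1$ for $c$ large so that a Neumann series finishes the argument. As it stands, the phrase ``yields invertibility once $c$ exceeds a threshold'' papers over precisely this point.
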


\begin{remark}\label{rem:NoMiddle}
Assertions (vi) and (iv) in Lemma \ref{lem:collection} imply the following: If $u\in H^{\gamma}_{p,\theta-p}(\gdomain)$ and $\Delta u \in H^{\gamma}_{p,\theta+p}(\gdomain)$, then $u\in H^{\gamma+2}_{p,\theta-p}(\gdomain)$ and there exists a constant $N$, which does not depend on $u$, such that
\begin{equation*}
\|u\|_{H^{\gamma+2}_{p,\theta-p}(\gdomain)}
\leq N 
\|\Delta u\|_{H^{\gamma}_{p,\theta+p}(\gdomain)}+N \|u\|_{H^{\gamma}_{p,\theta-p}(\gdomain)}.
\end{equation*}
\end{remark}

A proof of the following equivalent characterization of the weighted Sobolev spaces $H^\gamma_{p,\theta}(\gdomain)$ can be found in \cite[Proposition 2.2]{Lot2000}.

\begin{lemma}\label{lem:EquivWS}
Let $\{\xi_n : n\in\bZ\} \subseteq C^{\infty}_0(\gdomain)$ be such that  for all $n\in\bZ$ and $m\in\bN_0$,
\begin{equation}\label{eq:EquivWS}
|D^m\xi_n|\leq N(m)\, c^{nm} \quad\text{ and }\quad 
 \operatorname{supp} \xi_n \subseteq \{x\in \gdomain:
c^{-n-k_0}<\rho(x)<c^{-n+k_0}\}
\end{equation}
 for some
$c>1$ and $k_0>0$, where the constant $N(m)$ does not depend on $n\in\bZ$ and $x\in\gdomain$. Then, for any $u\in H^{\gamma}_{p,\theta}(\gdomain)$,
\begin{equation*}
\sum_{ n\in\bZ}  c^{n\theta}\|\xi_{-n}(c^n\cdot)u(c^n\cdot)\|^p_{H^{\gamma}_{p}}
\leq N\,
\|u\|^p_{H^{\gamma}_{p,\theta}(\gdomain)}.
\end{equation*}
If in addition
\begin{equation}\label{eq:EquivWS-b}
\sum_{ n\in\bZ} \xi_n(x)\geq\delta>0\text{ for all } x\in\gdomain
\end{equation}
then the converse inequality also holds.
\end{lemma}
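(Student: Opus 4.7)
The plan is to show that the $\xi$-based weighted sum is uniformly equivalent to the $\zeta$-based sum that defines $\|u\|_{H^\gamma_{p,\theta}(\gdomain)}$. The key geometric observation I would establish first is that, by comparing the support conditions in \eqref{eq:EquivWS} with those of the $\zeta_n$'s recalled above Lemma~\ref{lem:collection}, one has $\operatorname{supp}\xi_{-n}\cap\operatorname{supp}\zeta_{-k}\neq\emptyset$ only when $|k-n\log c|\leq K$ for some universal $K=K(c,k_0)$ (with $\log$ the natural logarithm). Hence the two partitions couple sparsely: each $\xi_{-n}$ interacts with at most $O(1)$ many $\zeta_{-k}$'s, and conversely.

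For the first (unconditional) inequality, I would use the multiplicative decomposition
\begin{equation*}
\xi_{-n}(x)\,u(x)\;=\;\sum_{k\in I_n}\frac{\xi_{-n}(x)\,\zeta_{-k}(x)}{S(x)}\,u(x),\qquad S:=\sum_{k\in\bZ}\zeta_{-k}\geq\delta>0,
\end{equation*}
where $I_n$ is the finite index set identified above. After the change of variables $y\mapsto c^n y$, the cutoffs $\eta_{n,k}(y):=\xi_{-n}(c^n y)\zeta_{-k}(c^n y)/S(c^n y)$ have $C^m$-seminorms bounded independently of $n,k$: this is a chain-rule calculation using $|D^m\xi_{-n}|\leq N(m)c^{-nm}$, the analogous bound $|D^m\zeta_{-k}|\leq N(m)e^{-km}$, and the fact that $c^n$ and $e^k$ are comparable for $k\in I_n$. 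Since smooth functions with uniformly bounded derivatives of all orders act as pointwise multipliers on $H^\gamma_p(\bR^d)$ with operator norm controlled by finitely many such seminorms (a classical Bessel-potential fact), this yields
\begin{equation*}
\|\xi_{-n}(c^n\cdot)u(c^n\cdot)\|_{H^\gamma_p}\;\leq\;N\sum_{k\in I_n}\|\zeta_{-k}(c^n\cdot)u(c^n\cdot)\|_{H^\gamma_p}.
\end{equation*}
A further dilation $z=(c^n e^{-k})y$, with factor $c^n e^{-k}\in[e^{-K},e^K]$, converts each right-hand term into $\|\zeta_{-k}(e^k\cdot)u(e^k\cdot)\|_{H^\gamma_p}$ up to a constant depending only on $\gamma$, $p$, $d$, and $K$ (dilation by a factor in a compact subset of $(0,\infty)$ is bounded on $H^\gamma_p(\bR^d)$). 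Since $c^{n\theta}/e^{k\theta}$ is likewise uniformly bounded for $k\in I_n$, multiplying by $c^{n\theta}$, raising to the $p$-th power, and summing in $n$ — with each $k$ appearing in at most $\lceil 2K/\log c\rceil+1$ of the sets $I_n$ — produces the desired upper bound $\sum_n c^{n\theta}\|\xi_{-n}(c^n\cdot)u(c^n\cdot)\|^p_{H^\gamma_p}\leq N\|u\|^p_{H^\gamma_{p,\theta}(\gdomain)}$.

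For the converse under the additional assumption \eqref{eq:EquivWS-b}, I would simply swap the roles of $\xi$ and $\zeta$: with $T:=\sum_{n}\xi_{-n}\geq\delta>0$, decompose $\zeta_{-k}u=\sum_{n\in J_k}(\zeta_{-k}\xi_{-n}/T)u$ and rerun exactly the same rescaling-and-multiplier argument to bound $e^{k\theta}\|\zeta_{-k}(e^k\cdot)u(e^k\cdot)\|^p_{H^\gamma_p}$ by an $O(1)$-fold sum of $\xi$-side contributions, then sum in $k$. The main technical point to verify carefully is the pointwise multiplier and dilation-boundedness properties on $H^\gamma_p(\bR^d)$ for arbitrary real $\gamma$; both are classical (see, e.g., \cite[Chapter~2]{Tri1995}) and their constants depend only on finitely many supremum norms of derivatives — precisely the quantities uniformly controlled by the scaling assumptions in \eqref{eq:EquivWS}.
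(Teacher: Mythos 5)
Your proof is correct: the sparse-overlap observation between $\operatorname{supp}\xi_{-n}$ and $\operatorname{supp}\zeta_{-k}$ (nonempty only for $|k-n\log c|\leq K$), the decomposition through $\xi_{-n}\zeta_{-k}/S$ with uniformly bounded derivatives after rescaling by $c^n$, the pointwise-multiplier and dilation bounds on $H^\gamma_p(\bR^d)$, and the $O(1)$ overlap counting in the final summation all hold as you describe, and the symmetric argument with $T=\sum_n\xi_{-n}\geq\delta$ correctly isolates where hypothesis \eqref{eq:EquivWS-b} is needed. The paper gives no proof of its own but defers to \cite[Proposition 2.2]{Lot2000}, and your argument is essentially the standard one found there, so there is nothing further to compare.
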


\begin{remark}\label{rem:EquivWS}
\textup{\textbf{(i)}} 
It is easy to check that both 
$$\ggklam{\xi^{(1)}_n:=e^{-n} (\zeta_{n})_{x^i} \,:\, n\in\bZ} 
\quad\text{ and }\quad 
\ggklam{\xi^{(2)}_n:=e^{-2n}(\zeta_{n})_{x^ix^j}\,:\, n\in\bZ}$$ 
satisfy \eqref{eq:EquivWS} with $c:=e$. Therefore,
\begin{equation*}
\sum_{ n\in\bZ} e^{n\theta}
\sgrklam{\| e^n(\zeta_{-n})_{x^i}(e^n\cdot) u(e^n\cdot)\|^p_{H^{\gamma}_p}
+
\|e^{2n}(\zeta_{-n})_{x^ix^j}(e^n\cdot) u(e^n\cdot)\|^p_{H^{\gamma}_p}}
\leq N
\|u\|^p_{H^{\gamma}_{p,\theta}(\gdomain)}.
\end{equation*}

\noindent{\textup{\textbf{(ii)}}} 
Given $k_1\geq 1$, fix a function $\tilde{\zeta}\in C^\infty_0(\bR_+)$ with
\begin{equation*}
\tilde{\zeta}(t)=1 \quad \text{for all} \quad t\in\sgeklam{\frac{1}{N}\,2^{-k_1}\, ,\, N(0)\,2^{k_1}},
\end{equation*}
where $N$ and $N(0)$ are as in \eqref{eq:PsiDist}. Then, the sequence $\{\xi_n:n\in\bZ\}\subseteq C^\infty_0(\gdomain)$ defined by
\begin{equation*}
\xi_n := \tilde{\zeta}(2^n\psi(\cdot)),\qquad n\in\bZ,
\end{equation*}
fulfils the conditions \eqref{eq:EquivWS} and \eqref{eq:EquivWS-b} from Lemma \ref{lem:EquivWS} with $c=2$ and a suitable $k_0>0$. Furthermore, 
\begin{equation*}
\xi_n(x)=1 \quad \text{for all} \quad x\in \rho^{-1}\grklam{2^{-n}\geklam{2^{-k_1},2^{k_1}}}.
\end{equation*}
\end{remark}

In this paper, $\domain$ {\bf will always denote a bounded
Lipschitz domain in $\bR^d$}. More precisely:

\begin{defn}\label{domain}
We call a bounded domain $\cO\subset\bR^d$ a Lipschitz domain if, and only if, for any
$x_0=(x^1_0,x'_0)\in
\partial \cO$, there exists
 a Lipschitz continuous function
$\mu_0:\bR^{d-1}\to \bR$  such that, upon relabeling and reorienting
the  coordinate  axes if necessary, we have
\begin{itemize}
\item[{\bf(i)}] $\cO \cap B_{r_0}(x_0)=\{x=(x^1,x')\in B_{r_0}(x_0):
x^1>\mu_0(x')\}$, and
\item[{\bf(ii)}] $|\mu_0(x')-\mu_0(y')|\leq K_0|x'-y'|$, for any $x',y'\in
\bR^{d-1}$,
\end{itemize}
where $r_0,K_0$ are independent of $x_0$.
\end{defn}

\begin{remark}\label{kuf}
Recall that for a bounded Lipschitz domain $\domain\subseteq\bR^d$, 
\begin{equation*}
\mathring{W}^1_p(\domain) = H^{1}_{p,d-p}(\domain)
\end{equation*}
with equivalent norms. This follows from \cite[Theorem~9.7]{Kuf1980} and Poincar{\'e}'s inequality.
\end{remark}

\setcounter{equation}{0}
\section{Stochastic parabolic weighted Sobolev spaces and SPDEs}\label{StWS-00}

In this section, we first introduce the stochastic parabolic spaces $\frH^{\gamma,q}_{p,\theta}(\gdomain,T)$ for arbitrary domains $\gdomain\subseteq\bR^d$ with non-empty boundary in analogy to the spaces $\frH^{\gamma,q}_{p,\theta}(T)=\frH^{\gamma,q}_{p,\theta}(\bR^d_+,T)$ from \cite{Kry2000,Kry2001}. 
Then we show that they are suitable to serve as solution spaces for equations of type \eqref{eq:mainEq} in the following sense: If we have a solution $u\in\frH^{\gamma,q}_{p,\theta}(\gdomain,T)$ with low regularity $\gamma\geq 0$, but $f$ and the $g^k$'s have high $L_q(L_p)$-regularity, then we can lift up the regularity of the solution (Theorem~\ref{thm:liftReg}).

\begin{defn}
Let $G$ be a domain in $\bR^d$ with non-empty boundary. For  $p,q\in(1,\infty)$, $\gamma,\theta\in\bR$   and $T\in (0,\infty)$ we define
\begin{align*}
\bH^{\gamma,q}_{p,\theta}(\gdomain,T)&
:= L_q(\Omega\times [0,T], \cP ,\wP\otimes  dt; H^{\gamma}_{p,\theta}(\gdomain)),\\
\bH^{\gamma,q}_{p,\theta}(\gdomain,T;\ell_2)&
:= L_q(\Omega\times [0,T], \cP,\wP\otimes  dt; H^{\gamma}_{p,\theta}(\gdomain;\ell_2)),\\
U^{\gamma,q}_{p,\theta}(\gdomain)&
:=L_q(\Omega,\cF_0,\wP; \psi^{1-2/q} H^{\gamma-2/q}_{p,\theta}(\gdomain)).
\end{align*}
If $p=q$ we also write $\bH^{\gamma}_{p,\theta}(\gdomain,T)$, $\bH^{\gamma}_{p,\theta}(\gdomain,T;\ell_2)$ and $U^{\gamma}_{p,\theta}(\gdomain)$ instead of $\bH^{\gamma,p}_{p,\theta}(\gdomain,T)$, $\bH^{\gamma,p}_{p,\theta}(\gdomain,T;\ell_2)$ and $U^{\gamma,p}_{p,\theta}(\gdomain)$ respectively.
\end{defn}

From now on let
\begin{align*}
p\in[2,\infty),\quad q\in[2,\infty),\quad \gamma\in\bR, \quad \theta\in\bR.
\end{align*}

\begin{defn}\label{def:StWS}
Let $G$ be a domain in $\bR^d$ with non-empty boundary. 
We write $u \in \frH^{\gamma,q}_{p,\theta}(\gdomain,T)$  if, and only if,
 $u\in \bH^{\gamma,q}_{p,\theta-p}(\gdomain,T)$,
$u(0,\cdot)\in U^{\gamma,q}_{p,\theta}(\gdomain)$,
and there exist some
$f \in \bH^{\gamma-2,q}_{p,\theta+p}(\gdomain,T)$ and
$g \in \bH^{\gamma-1,q}_{p,\theta}(\gdomain,T;\ell_2)$ such that
\begin{equation*}
du=f\,dt +g^k \,dw^k_t
\end{equation*}
in the sense of distributions. That is, for any $\varphi \in
C^{\infty}_{0}(\gdomain)$,  with probability one,  the equality
\begin{equation*}
(u(t,\cdot),\varphi)= (u(0,\cdot),\varphi) + \int^{t}_{0}
(f(s,\cdot),\varphi) \, ds + \sum^{\infty}_{k=1} \int^{t}_{0}
(g^k(s,\cdot),\varphi)\, dw^k_s
\end{equation*}
holds for all $t\in[0,T]$,  where the series is assumed to converge uniformly on $[0,T]$ in probability.  In this situation
we  write $\bD u:=f$ and $\bS u:=g$.
The norm in $\frH^{\gamma,q}_{p,\theta}(\gdomain,T)$ is defined as
$$
\|u\|_{\frH^{\gamma,q}_{p,\theta}(\gdomain,T)}:=
\|u\|_{\bH^{\gamma,q}_{p,\theta-p}(\gdomain,T)} + \|\bD
u\|_{\bH^{\gamma-2,q}_{p,\theta+p}(\gdomain,T)}
 +
\|\bS u\|_{\bH^{\gamma-1,q}_{p,\theta}(\gdomain,T;\ell_2)} +
\|u(0,\cdot)\|_{U^{\gamma,q}_{p,\theta}(\gdomain)}.
$$
If $p=q$ we also write $\frH^{\gamma}_{p,\theta}(\gdomain,T)$ instead of $\frH^{\gamma,p}_{p,\theta}(\gdomain,T)$.
\end{defn}

\begin{remark}\label{rem:Rd-spaces}
Replacing $\gdomain$ by $\bR^d$ and omitting the weight parameter $\theta$ and the weight function $\psi$ in the definitions above, one obtains the spaces 
$\bH^{\gamma,q}_p(T)=\bH^{\gamma,q}_p(\bR^d,T)$, 
$\bH^{\gamma,q}_{p}(T;\ell_2)=\bH^{\gamma,q}_{p}(\bR^d,T;\ell_2)$,
$U^{\gamma,q}_p=U^{\gamma,q}_p(\bR^d)$, and
$\mathscr{H}^{\gamma,q}_p(T)$ as introduced in \cite[Definition 3.5]{Kry2001}. The latter are denoted by $\mathcal{H}^{\gamma,q}_{p}(T)$ in \cite{Kry2000}; if  $q=p$ they coincide with the spaces $\mathcal{H}^{\gamma}_p(T)$ introduced in \cite[Definition 3.1]{Kry1999}.
\end{remark}

We consider initial value problems of the form
\begin{equation}\label{eq:mainEqa}
du = (a^{ij} u_{x^i x^j} + f)dt +(\sigma^{ik} u_{x^i} + g^k) d w^k_t, \qquad u(0,\cdot)=u_0,
\end{equation}
on an arbitrary domain $\gdomain\subseteq\bR^d$ with non-empty boundary. 
We use the following solution concept.
\begin{defn}\label{def:solution}
We say that a stochastic process $u\in \frH^{\gamma,q}_{p,\theta}(\gdomain,T)$ is a solution of Eq.\ \eqref{eq:mainEqa} if, and only if, 
\begin{equation*}
u(0,\cdot)=u_0,\quad 
\bD u =  a^{ij} u_{x^i x^j} + f, \quad \text{and} \quad
\bS u = \grklam{\sigma^{ik} u_{x^i} + g^k}_{k\in\bN},
\end{equation*} 
in the sense of Definition \ref{def:StWS}.
\end{defn}

\begin{remark} 
Here and in the sequel we use the summation convention on the repeated indices $i,j,k$.
The question, in which sense, for a bounded Lipschitz domain $\domain\subseteq\bR^d$, the elements of $\frH^{\gamma,q}_{p,\theta}(\domain,T)$ fulfil a zero Dirichlet boundary condition as in Eq.\ \eqref{eq:mainEq}, will be answered in Remark~\ref{rem:Dirichlet}. 
\end{remark}

We make the following assumptions on the coefficients in Eq.\ \eqref{eq:mainEqa}. Throughout this paper, whenever we will talk about this equation, we will assume that they are fulfilled. 

\begin{assumption}\label{ass:coeff}
\textbf{\textup{(i)}} 
The coefficients $a^{ij}=a^{ij}(\omega,t)$ and
$\sigma^{ik}=\sigma^{ik}(\omega,t)$
are predictable. They do not depend on $x\in\gdomain$. Furthermore, $a^{ij}=a^{ji}$ for $i,j\in\{1,\ldots,d\}$.

\noindent\textbf{\textup{(ii)}} 
There exist  constants  $\delta_0,K>0$ such that  for any
$(\omega,t)\in\Omega\times[0,T]$ and $\lambda \in \bR^d$,
\begin{equation*}
\delta_0 |\lambda|^2 \leq \bar{a}^{ij}(\omega,t) \lambda^{i} \lambda^{j}
 \leq K |\lambda|^2,
\end{equation*}
where $\bar{a}^{ij}(\omega,t):=a^{ij}(\omega,t)-\frac{1}{2}(\sigma^{i\cdot}(\omega,t),\sigma^{j\cdot}(\omega,t))_{\ell_2}$, with 
$\sigma^{i\cdot}(\omega,t)=\grklam{\sigma^{ik}(\omega,t)}_{k\in\bN}\in\ell_2$.
\end{assumption}

We will use the following result taken from \cite[Lemma 2.3]{Kry2000}. 

\begin{lemma}\label{lem:Kry2000_2.3}
Let $p\geq 2$, $m\in\bN$, and, for
$i=1,2,\ldots,m$,
\begin{equation*}
\lambda_i\in (0,\infty), \quad 
\gamma_i \in \bR, \quad 
u^{(i)}\in \cH^{\gamma_i +2}_p(T), \quad u^{(i)}(0,\cdot)=0.
\end{equation*}
Denote $\Lambda_i:=(\lambda_i-\Delta)^{\gamma_i/2}$. Then
\begin{align*}
\E\sgeklam{\int^T_0 \prod_{i=1}^m \|\Lambda_i \Delta u^{(i)}\|^p_{ L_p} dt}
&\leq  N 
\sum_{i=1}^m \E\sgeklam{ \int^T_0 \left(\|\Lambda_i f^{(i)}\|^p_{ L_p} +\|\Lambda_ig^{(i)}_x\|^p_{ L_p(\ell_2)}\right) 
\prod_{\substack{j=1 \\ j\neq i}}^{m} \|\Lambda_j \Delta u^{(j)}\|^p_{ L_p} dt}\\
&\quad + N 
\sum_{1\leq i<j\leq m} \E \sgeklam{ \int^T_0 \|\Lambda_ig^{(i)}_x\|^p_{
L_p(\ell_2)} \|\Lambda_jg^{(j)}_x\|^p_{L_p(\ell_2)} \prod_{\substack{k=1 \\ k\neq i,j}}^{m}\|\Lambda_k
\Delta u^{(k)}\|^p_{ L_p} dt},
\end{align*}
where $f^{(i)}:=\bD u^{(i)}-a^{rs}u^{(i)}_{x^rx^s}$,  $g^{(i)k}:=\bS^k u^{(i)}-\sigma^{rk}u^{(i)}_{x^r}$ and $L_p(\ell_2):=H^0_p(\ell_2)$. The constant $N$ depends only on $m$, $d$, $p$, $\delta_0$, and $K$. 
\end{lemma}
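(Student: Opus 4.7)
The plan is to run It\^o's formula on the product $\Phi(t):=\prod_{i=1}^m \phi_i(t)$, where $\phi_i(t):=\|v^{(i)}(t)\|_{L_p}^p$ and $v^{(i)}:=\Lambda_i\Delta u^{(i)}$, extracting the inequality by the usual combination of spatial integration by parts, stochastic parabolicity, and Young's inequality. By a density argument I may assume that everything is smooth in $x$ and that only finitely many $k\in\bN$ contribute, so that It\^o's formula applies rigorously. Since $\Lambda_i$ commutes with $\Delta$ and with $\partial_{x^r}$, each $v^{(i)}$ solves
\begin{equation*}
dv^{(i)}=(a^{rs}v^{(i)}_{x^rx^s}+\Lambda_i\Delta f^{(i)})\,dt+(\sigma^{rk}v^{(i)}_{x^r}+\Lambda_i\Delta g^{(i)k})\,dw^k_t,\qquad v^{(i)}(0)=0.
\end{equation*}

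Applying It\^o to each $\phi_i$ and combining the integration by parts on the $a^{rs}v^{(i)}_{x^rx^s}$-drift with the quadratic-variation contribution from $\sigma^{rk}v^{(i)}_{x^r}$ produces the non-positive ``parabolicity'' term $-p(p-1)\int|v^{(i)}|^{p-2}\bar a^{rs}v^{(i)}_{x^r}v^{(i)}_{x^s}\,dx$, which is bounded by $-p(p-1)\delta_0\int|v^{(i)}|^{p-2}|v^{(i)}_x|^2\,dx$ via Assumption~\ref{ass:coeff}(ii) and then discarded. In the remaining drift and in the stochastic-integral piece of $d\phi_i$, the two $\partial_x$-derivatives hidden in $\Lambda_i\Delta f^{(i)}$ and $\Lambda_i\Delta g^{(i)k}$ are transferred onto the nonlinear factor $|v^{(i)}|^{p-2}v^{(i)}$ by a double integration by parts (using the self-adjointness of $\Lambda_i$). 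Combining with H\"older's inequality in $x$, the drift of $\phi_i$ is then bounded by a multiple of $\|\Lambda_i f^{(i)}\|_{L_p}\phi_i^{(p-1)/p}+\|\Lambda_i g^{(i)}_x\|^2_{L_p(\ell_2)}\phi_i^{(p-2)/p}$ together with small multiples of the already-discarded parabolicity integral, which serve to absorb the residual $|v^{(i)}|^{p-3}|v^{(i)}_x|^2$-type remainders arising when $\Delta$ is commuted through $v\mapsto|v|^{p-2}v$.

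By the It\^o product rule, $d\Phi=\sum_i\prod_{j\neq i}\phi_j\,d\phi_i+\sum_{i<j}\prod_{k\neq i,j}\phi_k\,d\langle\phi_i,\phi_j\rangle$, and the cross-variations $d\langle\phi_i,\phi_j\rangle$, coming exclusively from the stochastic integrals, are estimated by Cauchy--Schwarz in the $\ell_2$-index of the integrands $\sigma^{rk}v^{(i)}_{x^r}+\Lambda_i\Delta g^{(i)k}$ after one further integration by parts; this yields precisely the pair-indexed sum on the right-hand side, with its characteristic $\|\Lambda_i g^{(i)}_x\|^p_{L_p(\ell_2)}\|\Lambda_j g^{(j)}_x\|^p_{L_p(\ell_2)}$ structure. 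Finally, Young's inequality with exponents $p$, $p/(p-1)$ (respectively $p/2$, $p/(p-2)$) separates $\|\Lambda_i f^{(i)}\|^p_{L_p}$ and $\|\Lambda_i g^{(i)}_x\|^p_{L_p(\ell_2)}$ from the accompanying products of $\phi$'s, matching the single-indexed sum of the claim; any leftover multiples of $\prod_k\phi_k$ are controlled by Gronwall's inequality after taking expectations and integrating in $t$. The main obstacle is the bookkeeping in the integration by parts that transfers $\Lambda_i\Delta$ through the nonlinearity $|v|^{p-2}v$: it generates intermediate $|v|^{p-3}|v_x|^2$-type remainders that must be reabsorbed into the discarded parabolicity term, and one must take care that no more than a single $\partial_x$ ends up on $g^{(i)k}$ and none on $f^{(i)}$, so that only the claimed norms $\|\Lambda_i f^{(i)}\|_{L_p}$ and $\|\Lambda_i g^{(i)}_x\|_{L_p(\ell_2)}$ appear on the right-hand side.
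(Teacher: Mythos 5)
The paper offers no proof of this lemma: it is quoted verbatim from \cite[Lemma 2.3]{Kry2000}, so your argument has to stand on its own. It does not, and the failure is at the step where you claim that ``the two $\partial_x$-derivatives hidden in $\Lambda_i\Delta f^{(i)}$ and $\Lambda_i\Delta g^{(i)k}$ are transferred onto the nonlinear factor $|v^{(i)}|^{p-2}v^{(i)}$ by a double integration by parts'' with only absorbable $|v|^{p-3}|v_x|^2$-type remainders. Write $v=v^{(i)}=\Lambda_i\Delta u^{(i)}$. Two integrations by parts turn $p\int|v|^{p-2}v\,\Lambda_i\Delta f^{(i)}\,dx$ into $p\int\Delta\bigl(|v|^{p-2}v\bigr)\,\Lambda_if^{(i)}\,dx$, and $\Delta\bigl(|v|^{p-2}v\bigr)=(p-1)|v|^{p-2}\Delta v+(p-1)(p-2)|v|^{p-4}v\,|v_x|^2$. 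The first summand contains $\Delta v=\Lambda_i\Delta(\Delta u^{(i)})$, a fourth-order derivative of $u^{(i)}$, which is controlled by nothing on the right-hand side and in particular not by the discarded parabolicity term $-\delta_0p(p-1)\int|v|^{p-2}|v_x|^2\,dx$, which only sees first derivatives of $v$. Integrating by parts once instead leaves $\|\Lambda_if^{(i)}_x\|_{L_p}$, one derivative too many on $f^{(i)}$. The quadratic-variation term has the same defect: $\int|v|^{p-2}\,|\Lambda_i\Delta g^{(i)k}|^2\,dx$ involves the square of $\Lambda_i(g^{(i)k}_x)_x$, and no integration by parts reduces it to $\|\Lambda_ig^{(i)}_x\|^2_{L_p(\ell_2)}\|v\|^{p-2}_{L_p}$ plus remainders absorbable into the parabolicity term.

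This is a structural obstruction, not bookkeeping. Take $m=1$, $\sigma\equiv0$, $g\equiv0$ and $f$ deterministic: the lemma then asserts $\int_0^T\|\Delta u\|^p_{L_p}\,dt\leq N\int_0^T\|f\|^p_{L_p}\,dt$ for $\partial_tu=a^{rs}u_{x^rx^s}+f$, $u(0)=0$, i.e.\ maximal $L_p(L_p)$-regularity of the heat operator; for $p\neq2$ this is a Calder\'on--Zygmund theorem and cannot be produced by It\^o's formula, integration by parts and the ellipticity of $\bar a^{rs}$ alone. Likewise the case $f\equiv0$ is Krylov's stochastic Littlewood--Paley estimate $\E\int\|u_{xx}\|^p_{L_p}\leq N\,\E\int\|g_x\|^p_{L_p(\ell_2)}$. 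Any correct proof must therefore take these one-factor harmonic-analytic estimates as external input. Your It\^o product skeleton --- the decomposition of $d\Phi$ into the diagonal drift terms and the $i<j$ cross-variations, followed by Young and Gronwall --- does capture the combinatorics by which the $m$-fold product estimate is assembled from one-factor information, but it cannot generate the one-factor bounds themselves, and as written the per-factor drift estimate is false. This is presumably why the present paper does not reprove the lemma and simply cites \cite{Kry2000}.
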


Now we are able to prove that if we have a solution $u\in \frH^{\gamma+1,q}_{p,\theta}(\gdomain,T)$ to Eq.\ \eqref{eq:mainEqa} and if the regularity of the forcing terms $f$ and $g$ is high then we can lift the regularity of the solution. Note that in the next theorem there is no restriction, neither on the shape of the domain $\gdomain \subseteq\bR^d$ nor on the parameters $\theta,\gamma\in\bR$.

\begin{thm}\label{thm:liftReg}
Let  $\gdomain\subseteq \bR^d$ be an arbitrary domain with non-empty boundary.
Let $\gamma\in\bR$, $p\geq2$ and  $q=pm$ for some $m\in\bN$. Let $f\in
\bH^{\gamma,q}_{p,\theta+p}(\gdomain,T)$,  
$g\in\bH^{\gamma+1,q}_{p,\theta}(\gdomain,T;\ell_2)$ and let 
$u\in\frH^{\gamma+1,q}_{p,\theta}(\gdomain,T)$ be a solution to Eq.\ \eqref{eq:mainEqa} with $u_0 = 0$.
Then  $u\in\frH^{\gamma+2,q}_{p,\theta}(\gdomain,T)$, and
\begin{equation*}
\|u\|^q_{\bH^{\gamma+2,q}_{p,\theta-p}(\gdomain,T)} 
\leq N
\sgrklam{
\|u\|^q_{\bH^{\gamma+1,q}_{p,\theta-p}(\gdomain,T)}
+
\|f\|_{\bH^{\gamma,q}_{p,\theta+p}(\gdomain,T)}^{q}
+
\|g\|^{q}_{\bH^{\gamma+1,q}_{p,\theta}(\gdomain,T;\ell_2)}},
\end{equation*}
where the constant $N\in (0,\infty)$ does not depend on $u$, $f$ and $g$.
\end{thm}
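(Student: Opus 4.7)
The plan is to split the desired estimate into a deterministic elliptic lift plus a localised $\mathbb{R}^d$-estimate. First, I would reduce to a bound on $\Delta u$: applied pointwise in $(\omega,t)$, raised to the $q$-th power and integrated, Remark~\ref{rem:NoMiddle} together with the embedding $\bH^{\gamma+1,q}_{p,\theta-p}\hookrightarrow\bH^{\gamma,q}_{p,\theta-p}$ shows that the stated estimate follows from
\[
\|\Delta u\|^q_{\bH^{\gamma,q}_{p,\theta+p}(\gdomain,T)}
\le N\sgrklam{\|u\|^q_{\bH^{\gamma+1,q}_{p,\theta-p}(\gdomain,T)}+\|f\|^q_{\bH^{\gamma,q}_{p,\theta+p}(\gdomain,T)}+\|g\|^q_{\bH^{\gamma+1,q}_{p,\theta}(\gdomain,T;\ell_2)}}.
\]
Once this is in hand, the drift $\bD u=a^{ij}u_{x^ix^j}+f$ automatically lies in $\bH^{\gamma,q}_{p,\theta+p}(\gdomain,T)$ (using Lemma~\ref{lem:collection}(iv), which gives $u_{xx}\in H^\gamma_{p,\theta+p}$ when $u\in H^{\gamma+2}_{p,\theta-p}$) and the diffusion $\bS u=(\sigma^{ik}u_{x^i}+g^k)_k$ in $\bH^{\gamma+1,q}_{p,\theta}(\gdomain,T;\ell_2)$, completing the $\frH^{\gamma+2,q}_{p,\theta}$-estimate.

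For the $\Delta u$-bound I would localise and rescale: for each $n\in\bZ$, set $v^{(n)}(t,x):=\zeta_{-n}(e^nx)\,u(t,e^nx)$, extended by zero to $\mathbb{R}^d$. A chain- and product-rule computation shows that $v^{(n)}$ solves an SPDE on $\mathbb{R}^d$ with initial value $0$, leading coefficient $e^{-2n}a^{ij}$, noise coefficient $e^{-n}\sigma^{ik}$, and forcing terms $F^{(n)}$, $G^{(n),k}$ that assemble $\zeta_{-n}(e^n\cdot)f(t,e^n\cdot)$, $\zeta_{-n}(e^n\cdot)g^k(t,e^n\cdot)$ together with commutator terms involving $(\zeta_{-n})_{x^i}(e^n\cdot)$, $(\zeta_{-n})_{x^ix^j}(e^n\cdot)$ acting on $u(t,e^n\cdot)$ and $u_{x^i}(t,e^n\cdot)$. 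The time change $\tau:=e^{-2n}t$ together with the induced Brownian rescaling $\widetilde{w}^k_\tau:=e^{-n}w^k_{e^{2n}\tau}$ brings the equation to standard form on $[0,e^{-2n}T]$ with the original ellipticity constants $(\delta_0,K)$ from Assumption~\ref{ass:coeff}. I then apply Lemma~\ref{lem:Kry2000_2.3} with all $m$ copies of $u^{(i)}$ equal to this rescaled $v^{(n)}$, $\gamma_i=\gamma$, and $\Lambda_i=(\lambda-\Delta)^{\gamma/2}$, using $q=pm$; Young's inequality absorbs the mixed products $\prod_{j\neq i}\|\Lambda\Delta v^{(n)}\|^p_{L_p}$ into the left-hand side to yield a pure $L_q(L_p)$-bound for $\Lambda\Delta v^{(n)}$ in terms of $F^{(n)}$ and $G^{(n)}_x$.

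Undoing the time change, multiplying by $e^{n(\theta-p)}$ and summing over $n\in\bZ$, the left-hand side reassembles, via the identity $\zeta_{-n}(e^n\cdot)\Delta u(e^n\cdot)=e^{-2n}\Delta v^{(n)}+(\text{commutator})$ and the defining formula of $H^\gamma_{p,\theta+p}(\gdomain)$, into $\|\Delta u\|^q_{\bH^{\gamma,q}_{p,\theta+p}(\gdomain,T)}$. On the right-hand side, the summed contributions of $\zeta_{-n}(e^n\cdot)f(t,e^n\cdot)$ and $\zeta_{-n}(e^n\cdot)g^k(t,e^n\cdot)$ reassemble into $\|f\|^q_{\bH^{\gamma,q}_{p,\theta+p}(\gdomain,T)}$ and $\|g\|^q_{\bH^{\gamma+1,q}_{p,\theta}(\gdomain,T;\ell_2)}$, while the commutator contributions reassemble into $\|u\|^q_{\bH^{\gamma+1,q}_{p,\theta-p}(\gdomain,T)}$ via Lemma~\ref{lem:EquivWS} applied to the cut-off families $\{e^{-n}(\zeta_{-n})_{x^i}(e^n\cdot)\}$ and $\{e^{-2n}(\zeta_{-n})_{x^ix^j}(e^n\cdot)\}$, which satisfy the admissibility condition~\eqref{eq:EquivWS} by Remark~\ref{rem:EquivWS}(i). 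The main obstacle is the exact bookkeeping of the $e^n$-factors through three independent sources, namely the chain rule on $u(e^n\cdot)$, the derivatives of $\zeta_{-n}$, and the Itô rescaling of time and Brownian motion; they must cancel exactly against the prescribed weights $e^{n(\theta\pm p)}, e^{n\theta}$ dictated by the weighted Sobolev norms on $\gdomain$. A secondary technicality is that Lemma~\ref{lem:Kry2000_2.3} is an a priori estimate assuming $v^{(n)}\in\cH^{\gamma+2}_p(T)$; this membership is secured by a standard mollification/approximation argument combined with uniqueness of solutions to the linear $\mathbb{R}^d$-Cauchy problem, which identifies the mollified solution with $v^{(n)}$ itself.
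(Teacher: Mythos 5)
Your reduction to a bound on $\|\Delta u\|_{\bH^{\gamma,q}_{p,\theta+p}(\gdomain,T)}$ via Remark~\ref{rem:NoMiddle}, the product-rule decomposition of $\zeta_{-n}\Delta u$ with commutators handled by Remark~\ref{rem:EquivWS}(i) and Lemma~\ref{lem:collection}, and the use of Lemma~\ref{lem:Kry2000_2.3} plus Young's inequality are all the right ingredients and match the paper. However, there is a genuine gap in how you deploy Lemma~\ref{lem:Kry2000_2.3}. You apply it for each fixed $n$ with all $m$ copies $u^{(i)}$ equal to the single localized function $v^{(n)}$, which yields a bound on $\E\int_0^T\|\Lambda\Delta v^{(n)}\|^{pm}_{L_p}\,dt$ for each $n$, and you then sum over $n$. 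But the target norm is
\begin{equation*}
\|\Delta u\|^q_{\bH^{\gamma,q}_{p,\theta+p}(\gdomain,T)}
=\E\int_0^T\Big(\sum_{n\in\bZ}e^{n(\theta+p)}\|(\zeta_{-n}\Delta u(t))(e^n\cdot)\|^p_{H^\gamma_p}\Big)^{m}dt ,
\end{equation*}
in which the sum over $n$ sits \emph{inside} the $m$-th power. Expanding that power produces a sum over $m$-tuples $(n_1,\dots,n_m)$ of products of $m$ generally \emph{different} factors; your diagonal application only controls the terms with $n_1=\dots=n_m$, and since $\sum_n a_n^m\le(\sum_n a_n)^m$ the inequality goes the wrong way — the off-diagonal cross terms are exactly what is missing. (Minkowski's inequality in $L_m(\Omega\times[0,T])$ does not repair this: it would force an $\ell_1$-type sum of the data norms over $n$, which is not dominated by the $\ell_p$-weighted sums defining $\|f\|_{\bH^{\gamma,q}_{p,\theta+p}}$ and $\|g\|_{\bH^{\gamma+1,q}_{p,\theta}}$.)

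This is precisely why Lemma~\ref{lem:Kry2000_2.3} is stated in multilinear form for $m$ possibly distinct functions with distinct operators $\Lambda_i$: the paper applies it to the tuple $(\zeta_{-n_1}u,\dots,\zeta_{-n_m}u)$ with $\Lambda_{n_i}=(e^{-2n_i}-\Delta)^{\gamma/2}$, the shifts $e^{-2n_i}$ coming from the scaling identity $\|v(c\,\cdot)\|^p_{H^\gamma_p}=c^{p\gamma-d}\|(c^{-2}-\Delta)^{\gamma/2}v\|^p_{L_p}$ rather than from an actual change of variables. Your proposed time change $\tau=e^{-2n}t$ is an additional obstruction here: for a tuple with $n_1\neq n_2$ the required time rescalings are incompatible, so the multilinear estimate could not even be applied to the rescaled functions simultaneously. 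The fix is to drop the space–time rescaling entirely, keep $u^{(n)}=\zeta_{-n}u$ on $[0,T]$ with the original coefficients, expand the $m$-th power into the sum over $(n_1,\dots,n_m)$, and invoke Lemma~\ref{lem:Kry2000_2.3} in its full multilinear form before applying H\"older's and Young's inequalities to reassemble the weighted norms.
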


\begin{proof} 
The case $m=1$, i.e., $p=q$ is covered by \cite[Lemma 3.2]{Kim2011}. 
Therefore, let $m\geq 2$. According to Remark~\ref{rem:NoMiddle} it is enough to show that
\begin{equation*}
\|\Delta u\|^q_{\bH^{\gamma,q}_{p,\theta+p}(\gdomain,T)}
\leq N
\sgrklam{
\|u\|^q_{\bH^{\gamma+1,q}_{p,\theta-p}(\gdomain,T)}
+
\|f\|_{\bH^{\gamma,q}_{p,\theta+p}(\gdomain,T)}^{ q} 
+
\|g\|^{q}_{\bH^{\gamma+1,q}_{p,\theta}(\gdomain,T;\ell_2)}
}.
\end{equation*}
Using the definition of weighted Sobolev spaces from Section \ref{WS}, we observe that
\begin{align*}
\nnrm{\Delta u}{\bH^{\gamma,q}_{p,\theta+p}(\gdomain,T)}^q
&=
\E \sgeklam{
\int_0^T \sgrklam{
\sum_{n\in\bZ} e^{n(\theta+p)} \nnrm{(\zeta_{-n}\Delta u(t))(e^n\cdot)}{H^{\gamma}_p}^p
}^m  dt} \\
&\leq N\,
\E \sgeklam{
\int_0^T \sgrklam{
\sum_{n\in\bZ} e^{n(\theta+p)} 
\sgrklam{
\nnrm{\Delta(\zeta_{-n}u(t))(e^n\cdot)}{H^{\gamma}_p}^p\\
&\phantom{\leq N \E \int_0^T ( \sum e^{n(\theta+p}( }+
\nnrm{(\Delta \zeta_{-n}u(t))(e^n\cdot)}{H^{\gamma}_p}^p
+
\nnrm{(\zeta_{-n x}u_x(t))(e^n\cdot)}{H^{\gamma}_p}^p
}
}^m  dt}.
\end{align*}
(Here $\zeta_{-n x}u_x$ is meant to be a scalar product in $\bR^d$.)
Now we can use Jensen's inequality and Remark \ref{rem:EquivWS}(i) to obtain
\begin{align*}
\nnrm{\Delta u}{\bH^{\gamma,q}_{p,\theta+p}(\gdomain,T)}^q
&\leq N \,
\E \sgeklam{
\int_0^T \sgrklam{
\sum_{n\in\bZ} e^{n(\theta+p)} 
\nnrm{\Delta(\zeta_{-n}u(t))(e^n\cdot)}{H^{\gamma}_p}^p}^m \\
&\phantom{N \E \sgeklam{\int_0^T \sgrklam{\sum_{n\in\bZ} e^{n(\theta+p)}}}}+
\nnrm{u(t)}{H^{\gamma}_{p,\theta-p}(\gdomain)}^q
+
\nnrm{u_x(t)}{H^{\gamma}_{p,\theta}(\gdomain)}^q
dt}.
\end{align*}
An application of Lemma \ref{lem:collection}(iii) and (iv) leads to
\begin{align*}
\nnrm{\Delta u}{\bH^{\gamma,q}_{p,\theta+p}(\gdomain,T)}^q
&\leq N\, 
\E \sgeklam{
\int_0^T \sgrklam{
\sum_{n\in\bZ} e^{n(\theta+p)} 
\nnrm{\Delta(\zeta_{-n}u(t))(e^n\cdot)}{H^{\gamma}_p}^p}^m dt}
+ N\,
\nnrm{u}{\bH^{\gamma+1,q}_{p,\theta-p}(\gdomain,T)}^q.
\end{align*}
Therefore, it is enough to estimate the first term on the right hand side, 
\begin{align*}
\E \sgeklam{ &
\int_0^T \sgrklam{
\sum_{n\in\bZ} e^{n(\theta+p)} 
\nnrm{\Delta(\zeta_{-n}u(t))(e^n\cdot)}{H^{\gamma}_p}^p}^m dt} \\
&=
\E \sgeklam{\int_0^T \sum_{n_1,\ldots,n_m\in\bZ} \!\!\!\! e^{\grklam{\sum_{i=1}^m n_i}(\theta+p)}
\prod_{i=1}^m \nnrm{\Delta(\zeta_{-n_i}u(t))(e^{n_i}\cdot)}{H^{\gamma}_p}^p
\, dt}.
\end{align*}
Tonelli's theorem together with the relation
\begin{equation}\label{eq:trivial}
\|u(c\,\cdot)\|^p_{H^{\gamma}_p}=c^{p\gamma-d}\|(c^{-2}-\Delta)^{\gamma/2}u\|^p_{
L_p} \quad \text{for } c\in (0,\infty),
\end{equation}
applied to $\Delta u^{(n_i)}$ with $u^{(n)}:=\zeta_{-n}u$ for $n\in \bZ$, show that we only have to handle
\begin{equation*}
\sum_{n_1,\ldots,n_m\in\bZ} \!\!\!\! e^{\grklam{\sum_{i=1}^m n_i}(\theta+p+p\gamma-d)}\,
\E \sgeklam{\int_0^T 
\prod_{i=1}^m \nnrm{(e^{-2 n_i}-\Delta)^{\gamma/2}\Delta u^{(n_i)}(t)}{L_p}^p
\, dt}.
\end{equation*}
Note that since $u\in \frH^{\gamma+1,q}_{p,\theta}(\gdomain,T)$ solves Eq.\ \eqref{eq:mainEqa} with vanishing initial value, $u^{(n)}$ is a solution of the equation
\begin{equation*}
dv=(a^{rs} v_{x^rx^s}
+f^{(n)})dt+(\sigma^{rk} v_{x^r}+g^{(n)k})dw^k_t, \qquad v(0,\cdot)=0,
\end{equation*}
on $\bR^d$, where 
$f^{(n)}=-2 a^{rs} (\zeta_{-n})_{x^{s}} u_{x^r} 
- a^{rs} (\zeta_{-n})_{x^rx^s} u+ \zeta_{-n} f$ and
$g^{(n)k}=-\sigma^{rk} (\zeta_{-n})_{x^r} u + \zeta_{-n} g^k$. Furthermore, applying \cite[Theorem 4.10]{Kry1999}, we have $u^{(n)}\in \cH^{\gamma+2}_{p}(T)$. Thus, we can use Lemma \ref{lem:Kry2000_2.3} to obtain
\begin{align*}
\E \sgeklam{\int_0^T 
\prod_{i=1}^m \nnrm{(e^{-2 n_i}-\Delta)^{\gamma/2}\Delta u^{(n_i)}(t)}{L_p}^p
\, dt}
&\leq  N 
\sum_{i=1}^m \grklam{I_{n_i}+I\!\! I_{n_i}}
+ N \sum_{1\leq i<j \leq m} \!\!I\!\! I\!\! I_{n_i n_j}
\end{align*}
where we denote
\begin{align*}
I_{n_i} &:= \E \sgeklam{ \int^T_0 \|\Lambda_{n_i} f^{(n_i)}(t)\|^p_{ L_p} 
\prod_{\substack{j=1 \\ j\neq i}}^{m} \|\Lambda_{n_j} \Delta u^{(n_j)}(t)\|^p_{ L_p} dt},\\
I\!\! I_{n_i} &:= \E \sgeklam{ \int^T_0 \|\Lambda_{n_i}g^{(n_i)}_x(t)\|^p_{L_p(\ell_2)}
\prod_{\substack{j=1 \\ j\neq i}}^{m} \|\Lambda_{n_j} \Delta u^{(n_j)}(t)\|^p_{ L_p} dt},\\
I\!\!I\!\!I_{n_i n_j} &:= \E \sgeklam{ \int^T_0 \|\Lambda_{n_i}g^{(n_i)}_x(t)\|^p_{
L_p(\ell_2)} \|\Lambda_{n_j}g^{(n_j)}_x (t)\|^p_{L_p(\ell_2)} \prod_{\substack{k=1 \\ k\neq i,j}}^{m}\|\Lambda_{n_k}
\Delta u^{(n_k)}(t)\|^p_{ L_p} dt},
\end{align*}
with $\Lambda_{n}:=(e^{-2n}-\Delta)^{\gamma/2}$.
Thus, it is enough to find a proper estimate for
\begin{equation*}
\sum_{n_1,\ldots,n_m\in\bZ} \!\!\!\! e^{\grklam{\sum_{i=1}^m n_i}(\theta+p+p\gamma-d)}
\sgrklam{\sum_{i=1}^m \grklam{I_{n_i}+I\!\!I_{n_i}}+\sum_{1\leq i<j\leq m} I\!\!I\!\!I_{n_i n_j}}.
\end{equation*}
Applying \eqref{eq:trivial} first, followed by  Tonelli's theorem, then H\"older's
and Young's 
inequality,  
leads to
\begin{align*}
&\sum_{n_1,\ldots,n_m\in\bZ} \!\!\!\! e^{\grklam{\sum_{i=1}^m n_i}(\theta+p+p\gamma-d)}
\sum_{i=1}^m I_{n_i}\\
&\,\, =
\sum_{n_1,\ldots,n_m\in\bZ} \!\!\!\! e^{\grklam{\sum_{i=1}^m n_i}(\theta+p)}
\sum_{i=1}^m 
\E \sgeklam{ \int^T_0 \| f^{(n_i)}(t,e^{n_i}\cdot)\|^p_{ H^{\gamma}_p} 
\prod_{\substack{j=1 \\ j\neq i}}^{m} \| \Delta u^{(n_j)}(t,e^{n_j}\cdot)\|^p_{ H^{\gamma}_p} \, dt}\\
&\,\, \leq N\,
\E \sgeklam{ \int^T_0 
\sgrklam{\sum_{n\in\bZ} e^{n(\theta +p)} \nnrm{f^{(n)}(t,e^n\cdot)}{H^\gamma_p}^p}
\sgrklam{\sum_{n\in\bZ} e^{n(\theta +p)} \nnrm{\Delta u^{(n)}(t,e^n\cdot)}{H^\gamma_p}^p}^{m-1}
dt}\\
&\,\, \leq N(\varepsilon)\, 
\E \sgeklam{\int_{0}^{T} \sgrklam{\sum_{n\in\bZ} e^{n(\theta +p)} \nnrm{f^{(n)}(t,e^n\cdot)}{H^\gamma_p}^p}^{\frac{q}{p}} \,dt}
+ \varepsilon \,
\E \sgeklam{ \int^T_0 
\sgrklam{\sum_{n\in\bZ} e^{n(\theta +p)} \nnrm{\Delta u^{(n)}(t,e^n\cdot)}{H^\gamma_p}^p}^\frac{q}{p}
dt}.
\end{align*}
Using the definition of $f^{(n)}$ and arguing as at the beginning of the proof, we get
\begin{align*}
\sum_{n\in\bZ} e^{n(\theta +p)} \nnrm{f^{(n)}(t,e^n\cdot)}{H^\gamma_p}^p
&\leq N\sgrklam{
\nnrm{u_x(t)}{H^\gamma_{p,\theta}(\gdomain)}^p
+ 
\nnrm{u(t)}{H^\gamma_{p,\theta-p}(\gdomain)}^p
+
\nnrm{f(t)}{H^{\gamma}_{p,\theta+p}(\gdomain)}^p}\\
&\leq N\sgrklam{
\nnrm{u(t)}{H^{\gamma+1}_{p,\theta-p}(\gdomain)}^p
+
\nnrm{f(t)}{H^{\gamma}_{p,\theta+p}(\gdomain)}^p}.
\end{align*}
Moreover,
\begin{align*}
\sum_{n\in\bZ} e^{n(\theta +p)} \nnrm{\Delta u^{(n)}(t,e^n\cdot)}{H^\gamma_p}^p
&\leq
\sum_{n\in\bZ} e^{n(\theta +p)} \nnrm{(\Delta \zeta_{-n} u (t))(e^n\cdot)}{H^\gamma_p}^p	\\
&\phantom{\leq }+
\sum_{n\in\bZ} e^{n(\theta +p)} \nnrm{(\zeta_{-nx} u_x (t))(e^n\cdot)}{H^\gamma_p}^p
+
\sum_{n\in\bZ} e^{n(\theta +p)} \nnrm{(\zeta_{-n} \Delta u(t))(e^n\cdot)}{H^\gamma_p}^p	\\
&\leq N\sgrklam{ 
\nnrm{u(t)}{H^{\gamma}_{p,\theta-p}(\gdomain)}^p
+
\nnrm{u_x(t)}{H^{\gamma}_{p,\theta}(\gdomain)}^p
+
\nnrm{\Delta u}{H^{\gamma}_{p,\theta+p}(\gdomain)}^p}	\\
&\leq N\sgrklam{
\nnrm{u(t)}{H^{\gamma+1}_{p,\theta-p}(\gdomain)}^p
+
\nnrm{\Delta u}{H^{\gamma}_{p,\theta+p}(\gdomain)}^p}.
\end{align*}
Combining the last three estimates, we obtain for any $\varepsilon>0$ a constant $N(\varepsilon)\in (0,\infty)$, such that
\begin{align*}
&\sum_{n_1,\ldots,n_m\in\bZ} \!\!\!\! e^{\grklam{\sum_{i=1}^m n_i}(\theta+p+p\gamma-d)} \sum_{i=1}^m I_{n_i}
\leq 
\varepsilon\, \nnrm{\Delta u}{\bH^{\gamma,q}_{p,\theta+p}(\gdomain,T)}^q
+
N(\varepsilon)\, \sgrklam{
\nnrm{f}{\bH^{\gamma,q}_{p,\theta+p}(\gdomain,T)}^q
+
\nnrm{u}{\bH^{\gamma+1,q}_{p,\theta-p}(\gdomain,T)}^q}.
\end{align*}
Using similar arguments we obtain
\begin{align*}
\sum_{n_1,\ldots,n_m\in\bZ} \!\!\!\! e^{\grklam{\sum_{i=1}^m n_i}(\theta+p+p\gamma-d)} &\sgrklam{
\sum_{i=1}^m I\!\!I_{n_i}
+\sum_{1\leq i<j\leq m} I\!\!I\!\!I_{n_i n_j}}\\
&\leq 
\varepsilon\, \nnrm{\Delta u}{\bH^{\gamma,q}_{p,\theta+p}(\gdomain,T)}^q
+
N(\varepsilon)\, \sgrklam{
\nnrm{g}{\bH^{\gamma+1,q}_{p,\theta}(\gdomain,T;\ell_2)}^q
+
\nnrm{u}{\bH^{\gamma+1,q}_{p,\theta-p}(\gdomain,T)}^q},
\end{align*}
which finishes the proof.
\end{proof}

Iterating this result has the following consequence.

\begin{corollary}\label{cor:liftReg}
Let $\gamma\geq 1$, $p\in[2,\infty)$ and $q=mp$ for some $m\in\bN$.
Furthermore, assume that 
$f\in \bH^{\gamma-2,q}_{p,\theta+p}(\gdomain,T)$,
$g\in\bH^{\gamma-1,q}_{p,\theta}(\gdomain,T;\ell_2)$, and that
$u\in 
\bH^{0,q}_{p,\theta-p}(\gdomain,T)$ satisfies
Eq.\ \eqref{eq:mainEqa} with $u_0=0$. 
Then $u\in\frH^{\gamma,q}_{p,\theta}(\gdomain,T)$, and 
\begin{equation*}
\|u\|^q_{\frH^{\gamma,q}_{p,\theta}(\gdomain,T)}
\leq N
\sgrklam{
\|u\|^q_{\bH^{0,q}_{p,\theta-p}(\gdomain,T)}
+
\|f\|^q_{\bH^{\gamma-2,q}_{p,\theta+p}(\gdomain,T)}
+
\|g\|^q_{\bH^{\gamma-1,q}_{p,\theta}(\gdomain,T;\ell_2)}},
\end{equation*}
where the constant $N\in(0,\infty)$ does not depend on $u$, $f$ and $g$.
\end{corollary}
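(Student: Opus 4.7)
The plan is to prove the corollary by bootstrapping the regularity of $u$ via repeated application of Theorem~\ref{thm:liftReg}. Fix an integer $n \geq \gamma$ (for instance $n := \lceil \gamma \rceil$) and set $\gamma_0 := \gamma - n \leq 0$. The first task is to establish the base case $u \in \frH^{\gamma_0, q}_{p,\theta}(\gdomain, T)$. Since $\gamma_0 \leq 0$, the embedding $H^0_{p,\theta - p}(\gdomain) \hookrightarrow H^{\gamma_0}_{p,\theta - p}(\gdomain)$ (inherited from the standard Bessel potential embedding on $\bR^d$ through the definition of the weighted norm) gives $u \in \bH^{\gamma_0, q}_{p,\theta - p}(\gdomain, T)$. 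Two successive applications of parts (iii) and (iv) of Lemma~\ref{lem:collection} to $u \in \bH^{0, q}_{p,\theta - p}$ yield $u_{x^i} \in \bH^{-1, q}_{p,\theta}(\gdomain, T)$ and $u_{x^i x^j} \in \bH^{-2, q}_{p,\theta + p}(\gdomain, T)$. Together with the boundedness of $a^{ij}$ and of $|\sigma^{i \cdot}|_{\ell_2}$ from Assumption~\ref{ass:coeff}, and with the embeddings $\bH^{\gamma - 2, q}_{p,\theta + p} \hookrightarrow \bH^{\gamma_0 - 2, q}_{p,\theta + p}$ and $\bH^{\gamma - 1, q}_{p,\theta}(\gdomain, T; \ell_2) \hookrightarrow \bH^{\gamma_0 - 1, q}_{p,\theta}(\gdomain, T; \ell_2)$, this shows $\bD u = a^{ij} u_{x^i x^j} + f \in \bH^{\gamma_0 - 2, q}_{p,\theta + p}(\gdomain, T)$ and $\bS u = (\sigma^{ik} u_{x^i} + g^k)_{k \in \bN} \in \bH^{\gamma_0 - 1, q}_{p,\theta}(\gdomain, T; \ell_2)$; since $u(0, \cdot) = 0$, this confirms $u \in \frH^{\gamma_0, q}_{p,\theta}(\gdomain, T)$.

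Next, I would apply Theorem~\ref{thm:liftReg} exactly $n$ times. At the $k$-th iteration ($k = 1, \ldots, n$) the current regularity of $u$ is $\gamma_0 + k - 1$, and I would invoke the theorem with its internal parameter set to $\gamma_0 + k - 2$, thereby producing $u \in \frH^{\gamma_0 + k, q}_{p,\theta}(\gdomain, T)$. The required hypotheses $f \in \bH^{\gamma_0 + k - 2, q}_{p,\theta + p}$ and $g \in \bH^{\gamma_0 + k - 1, q}_{p,\theta}(\ell_2)$ follow from the assumed regularities of $f$ and $g$ by embedding, since $k \leq n$ implies $\gamma_0 + k - 2 \leq \gamma - 2$ and $\gamma_0 + k - 1 \leq \gamma - 1$. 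The condition $u(0, \cdot) = 0$ holds throughout. After $n$ iterations one arrives at $u \in \frH^{\gamma_0 + n, q}_{p,\theta}(\gdomain, T) = \frH^{\gamma, q}_{p,\theta}(\gdomain, T)$.

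For the quantitative estimate, I would chain the a priori bounds from Theorem~\ref{thm:liftReg} across the $n$ iterations, replacing each intermediate norm by its extremal counterpart via embedding. This telescoping produces
\begin{equation*}
\|u\|^q_{\bH^{\gamma, q}_{p,\theta - p}(\gdomain, T)}
\leq N \sgrklam{
\|u\|^q_{\bH^{0, q}_{p,\theta - p}(\gdomain, T)}
+ \|f\|^q_{\bH^{\gamma - 2, q}_{p,\theta + p}(\gdomain, T)}
+ \|g\|^q_{\bH^{\gamma - 1, q}_{p,\theta}(\gdomain, T; \ell_2)}}.
\end{equation*}
The remaining components of the $\frH^{\gamma, q}_{p,\theta}$-norm, namely $\|\bD u\|_{\bH^{\gamma - 2, q}_{p,\theta + p}}$ and $\|\bS u\|_{\bH^{\gamma - 1, q}_{p,\theta}(\ell_2)}$, follow immediately from $\bD u = a^{ij} u_{x^i x^j} + f$, $\bS u = (\sigma^{ik} u_{x^i} + g^k)_k$ together with Lemma~\ref{lem:collection}(iii)--(iv) applied to $u \in \bH^{\gamma, q}_{p,\theta - p}$, while $\|u(0, \cdot)\|_{U^{\gamma, q}_{p,\theta}(\gdomain)} = 0$. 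The main obstacle is purely organizational: setting up the base case at a non-positive $\gamma_0$ so that the embedding into low-regularity Bessel potential spaces is available, and ensuring that each iteration step exactly matches the hypotheses of Theorem~\ref{thm:liftReg}; in particular, the fractional part of $\gamma$ is absorbed by the shift $\gamma_0 = \gamma - n$ rather than by any deeper interpolation argument. No substantive analytic difficulty arises beyond what is already resolved in the proof of Theorem~\ref{thm:liftReg}.
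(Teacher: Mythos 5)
Your proof is correct and takes exactly the route the paper intends: the paper dispatches this corollary with the single remark that it follows by iterating Theorem~\ref{thm:liftReg}, and your bootstrap --- establishing the base case at the non-positive level $\gamma_0=\gamma-\lceil\gamma\rceil$ via Lemma~\ref{lem:collection}(iii)--(iv) to absorb the fractional part, then applying the theorem $\lceil\gamma\rceil$ times with the hypotheses on $f$ and $g$ supplied by embedding, and telescoping the a~priori bounds --- is precisely that iteration written out in full.
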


\begin{remark}
An extension of the results above to the case where the coefficients depend on the space variable $x\in\gdomain$ can be proved along the lines of \cite{Kim2008, Kim2011}. Also, the symmetry of $a^{ij}$ can be dropped. To keep the expositions at a reasonable level, we do not discuss these cases.
\end{remark}

\setcounter{equation}{0}
\section{Solvability of SPDEs within $\frH^{\gamma,q}_{p,\theta}(\domain,T)$}\label{SPDEs}

In this section we prove existence and uniqueness of solutions to equations of type \eqref{eq:mainEqa} on bounded Lipschitz domains $\domain\subseteq\bR^d$ in the spaces $\frH^{\gamma,q}_{p,\theta}(\domain,T)$. 
We are mainly interested in the case $q>p$. 
The main ingredient will be Corollary~\ref{cor:liftReg} which allows us to lift up the regularity of the solution once we have established a certain low $L_q(L_p)$-regularity and if $f$ and the $g^k$'s have high $L_q(L_p)$-regularity. 
In this section we restrict ourselves to equations of type \eqref{eq:mainEqa} with $\sigma\equiv 0$ and vanishing initial condition, i.e., we consider the problem
\begin{equation}\label{eq:mainEqa2}
du = (a^{ij}u_{x^ix^j}+f)dt +g^kdw^k_t,\qquad u(0,\cdot)=0.
\end{equation}
We expect, however, that the lifting argument in Corollary~\ref{cor:liftReg} can be used to derive similar results for general equations of type \eqref{eq:mainEqa}.
We establish existence of solutions with low $L_q(L_p)$-regularity  in two different ways which correspond to two different restrictions in our assumptions. 
First, in Subsection \ref{subsec:LqLp_Dong} we consider Lipschitz domains with sufficiently small Lipschitz constants. Here we use an $L_q(L_p)$-regularity result for deterministic PDEs and basic estimates for stochastic integrals in UMD Banach spaces to derive a result for general integrability parameters $q\geq p\geq2$.
Then, in Subsection \ref{subsec:LqLp_heat} we consider the case of general bounded Lipschitz domains. 
Applying techniques from the semigroup approach to stochastic evolution equations in Banach spaces, we are able prove existence and uniqueness of solutions of the stochastic heat equation in $\frH^{\gamma,q}_{p,d}(\domain,T)$ for integrability parameters $p\in[2,p_0)$ and $q\geq p$.

\subsection{A result for domains with small Lipschitz constant}\label{subsec:LqLp_Dong}

We need the following result concerning existence and uniqueness of
solutions to SPDEs of the form \eqref{eq:mainEqa} in
$\frH^{\gamma}_{p,\theta}(\domain,T)=\frH^{\gamma,p}_{p,\theta}(\domain,T)$, i.e., for the case $p=q$. 
It is taken from \cite{Kim2011}, see Theorem 2.12 and Remark 2.13 therein. 
Note that it also holds under weaker assumptions on the parameters and for more general equations than stated here.

\begin{thm}\label{thm:LpLp}
Let $\domain$ be a bounded Lipschitz domain in $\bR^d$ and $\gamma\in\bR$. For  $i,j\in\{1,\ldots,d\}$ and $k\in\bN$, let $a^{ij}$, $\sigma^{ik}$ be given coefficients satisfying Assumption \ref{ass:coeff}.
\\
\emph{\textbf{(i)}} For $p\in[2,\infty)$, there exists a constant
$\kappa_0\in(0,1)$, depending only on $d$, $p$, $\delta_0$, $K$ and $\domain$, such that  for any
$\theta\in(d+p-2-\kappa_0,d+p-2+\kappa_0)$, $f\in\bH^{\gamma}_{p,\theta+p}(\domain,T)$,
$g\in\bH_{p,\theta}^{\gamma+1}(\mathcal O,T;\ell_2)$ and $u_0\in
U^{\gamma+2}_{p,\theta}(\mathcal O)$, Eq.\ \eqref{eq:mainEqa} has a
unique solution $u$ in the class $\mathfrak
H^{\gamma+2}_{p,\theta}(\domain,T)$. For this solution
\begin{equation}\label{eq:LpLpEst}
\|u\|_{\frH^{\gamma+2}_{p,\theta}(\domain,T)}^p\leq
N\sgrklam{\|f\|_{\bH^{\gamma}_{p,\theta+p}(\domain,T)}^p
+\|g\|_{\bH^{\gamma+1}_{p,\theta}(\domain,T;\ell_2)}^p
+\|u_0\|_{U^{\gamma+2}_{p,\theta}(\domain)}^p
},
\end{equation}
where the constant $N$ depends only on $d$, $p$, $\gamma$, $\theta$,
$\delta_0$, $K$, $T$ and $\mathcal O$.
\\
\emph{\textbf{(ii)}} There exists $p_0>2$, such that the following
statement holds: if $p\in[2,p_0)$, then there exists a constant
$\kappa_1\in(0,1)$, depending only on $d$, $p$, $\delta_0$, $K$ and $\domain$, such that  for any
$\theta\in(d-\kappa_1,d+\kappa_1)$, $f\in\bH^{\gamma}_{p,\theta+p}(\domain,T)$,
$g\in\bH_{p,\theta}^{\gamma+1}(\domain,T;\ell_2)$ and $u_0\in
U^{\gamma+2}_{p,\theta}(\domain)$,  Eq.\ \eqref{eq:mainEqa} has a
unique solution $u$ in the class
$\frH^{\gamma+2}_{p,\theta}(\domain,T)$. For this solution, estimate
\eqref{eq:LpLpEst} holds.
\end{thm}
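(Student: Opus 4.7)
My plan would be to follow the standard three-step scheme from the Krylov school: (a) establish the a priori estimate \eqref{eq:LpLpEst} for sufficiently smooth solutions, (b) conclude existence by the method of continuity from a model problem, and (c) derive uniqueness from the a priori estimate. Before anything else, I would reduce the problem to $\gamma=0$ with vanishing initial data. The lifting in $\gamma$ is provided by Corollary \ref{cor:liftReg} in its $p=q$ version, and the reduction $u_0\neq 0 \leadsto u_0=0$ is handled by subtracting a time-independent extension of $u_0$ constructed via the isomorphism $\psi^2\Delta-c : H^{\gamma+1}_{p,\theta}(\domain) \to H^{\gamma-1}_{p,\theta}(\domain)$ of Lemma \ref{lem:collection}(vi), whose boundary decay matches the weighted structure.

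The main work is the a priori estimate for the equation
\begin{equation*}
du = (a^{ij}u_{x^ix^j} + f)\,dt + g^k\,dw^k_t, \qquad u(0,\cdot) = 0,
\end{equation*}
with $u\in\frH^{2}_{p,\theta}(\domain,T)$. I would build it by a partition of unity on $\domain$: on boundary patches, straighten $\partial\domain$ by a Lipschitz change of variables and reduce to the half-space estimate of \cite{KryLot1999, KryLot1999b}; on interior patches, the equation is essentially on $\bR^d$ with known $L_p$-estimates. Summing the pieces requires absorbing the commutator terms produced by localization and flattening into $\bH^{0}_{p,\theta+p}(\domain,T)$, for which Lemma \ref{lem:collection}(iii)--(v) gives the correct weight manipulations. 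The admissible range $\theta\in(d+p-2-\kappa_0, d+p-2+\kappa_0)$ comes precisely from the critical weight $\theta=d+p-2$, for which the Hardy-type identification $\mathring{W}^1_p(\domain)=H^1_{p,d-p}(\domain)$ of Remark \ref{kuf} makes the quadratic-variation term produced by the stochastic integral controllable after applying Ito's formula to $\|u\|_{L_{p,\theta-p}}^p$; the width $\kappa_0$ is governed by the Lipschitz constant of $\partial\domain$ and $p$, through the stability of Hardy's inequality under small perturbations of the weight.

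Once the a priori estimate is in hand, existence is obtained by the method of continuity along the path $(a^{ij}_\lambda, \sigma^{ik}_\lambda) = ((1-\lambda)\delta^{ij}+\lambda a^{ij}, \lambda\sigma^{ik})$, $\lambda\in[0,1]$, which preserves Assumption \ref{ass:coeff}(ii); solvability at $\lambda=0$ reduces to the stochastic heat equation, which is handled by smoothing $f$ and $g$ and passing to the limit in the estimate. Uniqueness is immediate from \eqref{eq:LpLpEst} applied to the difference of two solutions. For part \textbf{(ii)}, the admissible weight interval centres on $\theta=d$, where the Hardy identification above is borderline; here I would replace it by the identification of the domain of the square root of the weak Dirichlet Laplacian on $L_p(\domain)$ with $\mathring{W}^1_p(\domain)$, which is valid only for $p\in[2,p_0)$ (this is Lemma~\ref{thm:laplacedwpsqrt} of the present paper and accounts for the extra restriction on $p$).

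The main obstacle is the a priori estimate and, within it, the determination of the admissible weight windows $\kappa_0$ and $\kappa_1$. One has to run the Ito energy identity on the half-space with very careful tracking of the weight dependence, and then absorb the boundary-localized commutators generated by the Lipschitz change of variables; such absorption succeeds only in a narrow neighbourhood of the critical weight, the width of which is tied to the Lipschitz constant of $\partial\domain$ and, in part (ii), to the failure of $p$-independent domain identification for $\sqrt{-\Delta_D}$ beyond $p_0$. Every other ingredient --- lifting in $\gamma$, reducing the initial data, method of continuity, uniqueness --- is by now routine within the Krylov framework.
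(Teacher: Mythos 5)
The first thing to note is that the paper does not prove Theorem~\ref{thm:LpLp} at all: it is imported verbatim from \cite{Kim2011} (Theorem~2.12 and Remark~2.13 there), so any comparison is really with the proof in that reference rather than with an argument in this paper.

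Measured against what is actually needed, your sketch has a genuine gap at its central step. You propose to obtain the a priori estimate by localizing near the boundary and flattening $\partial\domain$ so as to invoke the half-space theory of \cite{KryLot1999,KryLot1999b}. For a bounded \emph{Lipschitz} domain the flattening map $x\mapsto(x^1-\mu_0(x'),x')$ is only bi-Lipschitz, so the transformed equation has leading coefficients involving $D\mu_0$, which are merely bounded and measurable in $x$; these are outside the scope of the half-space results you cite (which require coefficients independent of $x$, or at least continuous/VMO), and there is no room to absorb the resulting error terms since they are of the same order as the main part. This is precisely why the Lipschitz case required the separate treatment of \cite{Kim2008,Kim2011}: part~(i) there rests on a weighted energy/It\^o argument around the distinguished weight $\theta=d+p-2$ that works on any bounded domain admitting Hardy's inequality, with $\kappa_0$ measuring the admissible perturbation of that weight, while the restriction $p<p_0$ in part~(ii) comes from the deterministic theory of the heat equation on Lipschitz domains (Green-function/Jerison--Kenig type estimates), not from Lemma~\ref{thm:laplacedwpsqrt} of the present paper, which is proved here later and for a different purpose (the $L_q(L_p)$ heat equation of Theorem~\ref{thm:heat_LqLp}). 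Note also that the boundary-flattening device \emph{is} legitimately used in this paper in the proof of Theorem~\ref{thm:mainHoelder}, but only because there one transforms the abstract relation $du=f\,dt+g^k\,dw^k_t$ and the spaces $H^{\nu}_{p,\theta}$ for $\nu\in[-1,1]$, without having to preserve the class of second-order operators; that is exactly the feature your argument would need and does not have. The remaining ingredients of your plan (reduction to $\gamma=0$ and $u_0=0$, method of continuity with the interpolation $((1-\lambda)\delta^{ij}+\lambda a^{ij},\lambda\sigma^{ik})$, uniqueness from the estimate) are standard and unobjectionable.
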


Here is the main result of this subsection.

\begin{thm}\label{thm:LpLq}
Let $\domain$ be a bounded Lipschitz domain in $\bR^d$ and let $\gamma\geq 0$. 
For given coefficients $a^{ij}$, $i,j\in\{1,\ldots,d\}$, let Assumption \ref{ass:coeff} be satisfied with $\sigma\equiv 0$. 
Then, for $2\leq p\leq q<\infty$, there exists a constant $c=c(d, p, q, \delta_0, K)$, such that, if the Lipschitz constant $K_0$ in Definition~\ref{domain} satisfies $K_0\leq c$, the following holds: For 
\[f\in\bH^{\gamma,q}_{p,d+p}(\domain,T)\cap\bH^{0,q}_{p,d}(\domain,T)\quad
\text{ and } \quad g\in\bH^{\gamma+1,q}_{p,d}(\domain,T;\ell_2)\cap\bH^{1,q}_{p,d-p}(\domain,T;\ell_2),\]
Eq.~\eqref{eq:mainEqa2} has a unique solution $u\in\frH^{\gamma+2,q}_{p,d}(\domain,T)$. 
Moreover, there exists a constant $N\in (0,\infty)$, which does not depend on $u$, $f$ and $g$, such that
\begin{equation}\label{eq:LpLqEst}
\begin{aligned}
\|u\|_{\frH^{\gamma+2,q}_{p,d}(\domain,T)}	
\leq N \sgrklam{
\|f&\|_{\bH^{\gamma,q}_{p,d+p}(\domain,T)}
+
\|f\|_{\bH^{0,q}_{p,d}(\domain,T)}	\\
&+
\|g\|_{\bH^{\gamma+1,q}_{p,d}(\domain,T;\ell_2)}
+
\|g\|_{\bH^{1,q}_{p,d-p}(\domain,T;\ell_2)}
}.
\end{aligned}
\end{equation}
\end{thm}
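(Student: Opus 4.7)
The plan is to construct the solution first with low spatial regularity, namely in $\bH^{0,q}_{p,d-p}(\domain,T)$, and then invoke Corollary~\ref{cor:liftReg} to promote it to $\frH^{\gamma+2,q}_{p,d}(\domain,T)$. Accordingly, I decompose $u=v+w$ with
\[
dw=(a^{ij}w_{x^ix^j}+f)\,dt,\qquad w(0,\cdot)=0,
\]
\[
dv=a^{ij}v_{x^ix^j}\,dt+g^k\,dw^k_t,\qquad v(0,\cdot)=0.
\]
Since $a^{ij}=a^{ij}(\omega,t)$ is independent of $x$, the equation for $w$ is, for a.e.\ fixed $\omega$, a deterministic uniformly parabolic PDE with merely measurable coefficients in $t$. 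I apply pathwise the $L_q(L_p)$-regularity theorem of Dong and Kim~\cite{DonKim2011}, which is valid on bounded Lipschitz domains provided the Lipschitz constant $K_0$ is sufficiently small in terms of $d,p,q,\delta_0,K$. Integrating in $\omega$ and using the identification $\mathring{W}^1_p(\domain)=H^1_{p,d-p}(\domain)$ (Remark~\ref{kuf}) together with $L_p(\domain)=H^0_{p,d}(\domain)$, this produces
\[
\|w\|_{\bH^{0,q}_{p,d-p}(\domain,T)}+\|w_{xx}\|_{\bH^{0,q}_{p,d}(\domain,T)}\leq N\,\|f\|_{\bH^{0,q}_{p,d}(\domain,T)}.
\]

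For $v$, I view the equation as a stochastic evolution problem on $L_p(\domain)$ driven by the analytic semigroup generated by $a^{ij}\partial_{x^ix^j}$ with zero Dirichlet boundary conditions, so that $v$ is represented by a stochastic convolution. The $L_q$-estimate for stochastic integrals in the UMD space $L_p(\domain)$ from van Neerven, Veraar and Weis~\cite{NeeVerWei2007}, combined with the $\tfrac12$-order smoothing of the semigroup and Hardy's inequality at the boundary, then yields $v\in\bH^{1,q}_{p,d-p}(\domain,T)$ together with a matching estimate in terms of $\|g\|_{\bH^{1,q}_{p,d-p}(\domain,T;\ell_2)}$ and $\|g\|_{\bH^{0,q}_{p,d}(\domain,T;\ell_2)}$. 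Adding the two contributions, $u=v+w\in\bH^{0,q}_{p,d-p}(\domain,T)$ is a distributional solution of Eq.~\eqref{eq:mainEqa2} in the sense of Definition~\ref{def:solution}. Plugging this low-regularity information into Corollary~\ref{cor:liftReg}---first for $q=mp$, $m\in\bN$, and extending to arbitrary $q\in[p,\infty)$ either by inspection of that proof or by interpolation on the bounded interval $[0,T]$---lifts $u$ to $\frH^{\gamma+2,q}_{p,d}(\domain,T)$ with the full estimate~\eqref{eq:LpLqEst}. Uniqueness reduces, via the embedding $\frH^{\gamma+2,q}_{p,d}(\domain,T)\hookrightarrow\frH^{\gamma+2,p}_{p,d}(\domain,T)$ valid on $[0,T]$, to uniqueness in the $p=q$ class, which is Theorem~\ref{thm:LpLp} (with $\theta=d$ lying in the admissibility window thanks to the smallness of $K_0$).

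The principal obstacle is the bookkeeping between the unweighted $L_p$-spaces appearing in the Dong--Kim and UMD ingredients and the weighted scale $H^\gamma_{p,\theta}(\domain)$ native to Corollary~\ref{cor:liftReg}; the identification $\mathring{W}^1_p(\domain)=H^1_{p,d-p}(\domain)$ from Remark~\ref{kuf} is the crucial bridge here. A second delicate point is the twofold role of the smallness constraint on $K_0$: it must be small enough for the Dong--Kim deterministic estimate to apply at the prescribed $(p,q)$, and simultaneously small enough for $\theta=d$ to lie in the admissibility window of Theorem~\ref{thm:LpLp} (so that uniqueness is available at $p=q$). Finally, matching the restriction $q=mp$ of Corollary~\ref{cor:liftReg} with the full range $q\in[p,\infty)$ required by the theorem is a technicality that should be handled by interpolation between $q=p$ and $q=mp$ on the bounded interval $[0,T]$, or by extending the proof of Corollary~\ref{cor:liftReg} verbatim to non-integer ratios.
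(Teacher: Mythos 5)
Your overall architecture (establish a low-regularity $L_q(L_p)$ solution, lift it with Corollary~\ref{cor:liftReg}, reduce uniqueness to the $p=q$ theory) matches the paper, and your treatment of the deterministic part via the pathwise Dong--Kim estimate is fine. But your decomposition $u=v+w$ differs from the paper's in a way that creates a genuine gap in the stochastic part. The paper does \emph{not} solve a stochastic evolution equation for the noise contribution; it subtracts the \emph{plain} stochastic integral $w(t)=\sum_k\int_0^t g^k(s)\,dw^k_s$ (no semigroup), which lies in $\bH^{1,q}_{p,d-p}(\domain,T)$ directly by the elementary UMD type-2 estimate of \cite[Corollary~3.10]{NeeVerWei2007} applied with target space $H^1_{p,d-p}(\domain)=\mathring W^1_p(\domain)$ -- no smoothing is needed because $g$ is already assumed to have one weighted derivative. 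The residual $\bar u=u-w$ then solves a pathwise deterministic equation with the extra divergence-form forcing $a^{ij}w_{x^ix^j}$, which is exactly the kind of data \cite[Theorem~8.1]{DonKim2011} accepts.

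Your route instead represents $v$ as a stochastic convolution and invokes ``$\tfrac12$-order smoothing of the semigroup'' to land in $\bH^{1,q}_{p,d-p}$. This is stochastic maximal $L^p$-regularity, and it fails to be available here for two reasons. First, $a^{ij}(\omega,t)\partial_{x^ix^j}$ has random, merely measurable-in-$t$ coefficients, so it generates no single analytic semigroup; the results of \cite{NeeVerWei2012,NeeVerWei2012b} used in this paper are for an autonomous generator, and that is precisely why the paper's Subsection~\ref{subsec:LqLp_heat} restricts to the Laplacian when it takes the semigroup route. Second, even for the Laplacian, identifying the half-derivative smoothing gain with membership in $\mathring W^1_p(\domain)$ requires $D((-\laplacedwp)^{1/2})=\mathring W^1_p(\domain)$, i.e.\ Lemma~\ref{thm:laplacedwpsqrt}, which on a Lipschitz domain is only established for $p\in[2,p_0)$ with $p_0>3$; that restriction appears in Theorem~\ref{thm:heat_LqLp} but must \emph{not} appear in Theorem~\ref{thm:LpLq}, which allows all $p\in[2,\infty)$. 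Smallness of $K_0$ is not shown anywhere in the paper to remove this restriction. A smaller correction: for uniqueness you should embed into $\frH^{\gamma+2}_{2,d}(\domain,T)$ and use Theorem~\ref{thm:LpLp}(ii) with $p=2$ (where $\theta=d$ is always admissible), rather than into the $p=q$ class, since the admissibility window of Theorem~\ref{thm:LpLp}(i) is centered at $d+p-2\neq d$ for $p>2$ and part (ii) requires $p<p_0$.
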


\begin{remark}
We note that every bounded $C^1$-domain $\domain\subseteq\bR^d$ is a Lipschitz domain where the Lipschitz constant $K_0$ in Definition~\ref{domain} can be chosen arbitrarily small. Therefore, the assertion of Theorem~\ref{thm:LpLq} holds for any bounded $C^1$-domain.
\end{remark}

\begin{proof}[Proof of Theorem~\ref{thm:LpLq}]
Since $ \frH^{\gamma+2,q}_{p,d}(\domain,T) \hookrightarrow \frH^{\gamma+2}_{2,d}(\domain,T)$, the uniqueness follows from Theorem~\ref{thm:LpLp}.  Also by Theorem \ref{thm:LpLp}, there exists a solution $u\in \frH^{\gamma+2}_{2,d}(\domain,T)$. We only need to show that  $u\in \frH^{\gamma+2,q}_{p,d}(\domain,T)$ and that it satisfies
(\ref{eq:LpLqEst}).
For all $\varphi\in C_0^\infty(\domain)$, with probability one,
\begin{equation*}\label{exRes2-1}
\big(u(t),\varphi\big)=\int_0^t\big(a^{ij}(s)u_{x^ix^j}(s)+f(s),\varphi\big)ds+
\sum_{k=1}^\infty\int_0^t\big(g^k(s),\varphi\big)dw^k_s, \quad t\in[0,T].
\end{equation*}
Let us define
\[w(t):=\sum_{k=1}^\infty\int_0^t g^k(s)dw^k_s\]
as an infinite sum of, say $H^1_{2,d-2}(\domain)$-valued stochastic integrals. 
This sum converges in the space $\bH^1_{2,d-2}(\domain,T)$ due to It{\^o}'s isometry and since $g\in\bH^{1,q}_{p,d-p}(\domain,T;\ell_2)\hookrightarrow\bH^1_{2,d-2}(\domain,T;\ell_2)$. 
We fix a continuous modification of the $H^1_{2,d-2}(\domain)$-valued process $(w(t))_{t\in[0,T]}$ from now on, which is well-known to exist. For all $\varphi\in C_0^\infty(\domain)$, with probability one,
\begin{equation*}
\sum_{k=1}^{\infty} \int_0^t\big(g^k(s),\varphi\big)dw^k_s=(w(t),\varphi), \quad t\in[0,T].
\end{equation*}
Therefore, setting $\bar u:=u-w$, we know that for all $\varphi\in C_0^\infty(\domain)$, with probability one,
\begin{equation}\label{exRes2-3}
\big(\bar u(t),\varphi\big)=
\int_0^t\big(a^{ij}(s)\bar u_{x^ix^j}(s)+f(s)+a^{ij}(s)w_{x^ix^j}(s),\varphi\big)ds,\quad t\in[0,T].
\end{equation}
It follows that $\bar u$ is the unique solution in $\frH^1_{2,d}(\domain,T)$ to \begin{equation}\label{exRes2-4}
d\bar{u}=(a^{ij}\bar{u}_{x^ix^j}+f+a^{ij}w_{x^ix^j})dt,\quad \bar u(0)=0
\end{equation}

We are going to consider \eqref{exRes2-4} $\omega$-wise and apply an $L_q(L_p)$-regularity result for deterministic PDEs from \cite{DonKim2011}.
To this end, we have to check that in the present situation our notion of a solution fits to the one described therein.
Since $\bar u$, $w\in\bH^1_{2,d-2}(\domain,T)$ and $f\in\bH^{0,q}_{p,d}(\domain,T)\hookrightarrow\bH^{0}_{2,d}(\domain,T)$, we know that, for almost every $\omega\in\Omega$, the mappings $t\mapsto\bar u(\omega,t,\cdot)$ and $t\mapsto w(\omega,t,\cdot)$ belong to $L_2([0,T];H^1_{2,d-2}(\domain))$ and $t\mapsto f(\omega,t,\cdot)$ belongs to $L_2([0,T];L_2(\domain))$.
In particular, by Lemma \ref{lem:collection}, $\bar u_{x^ix^j}(\omega)$ and $w_{x^ix^j}(\omega)$ belong to $L_2([0,T];H^{-1}_{2,d+2}(\domain))$ for all $i,j=1,\ldots,d$, so that \[\int_0^t\{a^{ij}(\omega,s)\bar u_{x^ix^j}(\omega,s)+f(\omega,s)+ a^{ij}(\omega,s)w_{x^ix^j}(\omega,s)\}ds=:\int_0^t\Phi(\omega,s)ds\]
exists as an $H^{-1}_{2,d+2}(\domain)$-valued Bochner integral. This and \eqref{exRes2-3} imply that, for almost all $\omega\in\Omega$,
\begin{equation*}
\big(\bar u(\omega,t),\varphi_k\big)=
\grklam{\int_0^t\Phi(\omega,s)ds
,\varphi_k},\; k\in\bN,\;t\in[0,T],
\end{equation*}
where $\{\varphi_k\}_{k\in\bN}\subseteq C_0^\infty(\domain)$ is supposed to be dense in $H^{1}_{2,d-2}(\domain)$. As a consequence,
\begin{equation*}
\bar u(\omega,t)=\int_0^t\Phi(\omega,s)ds
,\quad t\in[0,T].
\end{equation*}
Standard arguments lead to
\begin{equation*}
\int_0^T\phi'(t)\bar u(\omega,t)dt=-\int_0^T\phi(t)\Phi(\omega,t) dt+\phi(T)\bar u(\omega,T),\quad \phi\in C_0^\infty(\bR),
\end{equation*}
where the integrals are $H^{-1}_{2,d+2}(\domain)$-valued Bochner integrals.
We obtain
\begin{equation}\label{exRes2-8}
\begin{aligned}
\int_0^T\int_\domain&\bar u(\omega) \frac{\partial}{\partial t} h\,dx \,dt\\
&=\int_0^T\int_\domain (a^{ij}(\omega)\bar u_{x_j}(\omega) h_{x^i} - f(\omega)h
+a^{ij}(\omega)w_{x_j}(\omega) h_{x^i})\, dx\, dt+\int_\domain\bar u(\omega,T)h(T)dx
\end{aligned}
\end{equation}
for all test-functions $h=\phi\otimes\varphi$, $\phi\in C_0^\infty(\bR)$, $\varphi\in C_0^\infty(\domain)$.
Using approximation arguments one can verify that \eqref{exRes2-8} even holds for all test-functions $h$ which belong to the space $\cH^1_{2,2}((0,T)\times\domain)$ considered in \cite{DonKim2011} and which  vanish on $(0,T)\times\partial\domain$ in the sense that $h(t)\in \mathring W^1_2(\domain)=H^1_{2,d-2}(\domain)$ for almost all $t\in(0,T)$. 
Moreover, for almost all $\omega\in\Omega$, $\bar u(\omega)$ belongs to the space $\mathring\cH^1_{2,2}((0,T)\times\domain)$ as defined in \cite{DonKim2011} and it vanishes on $(0,T)\times\partial\domain$. Thus, for almost all $\omega\in\Omega$, $\bar u(\omega)$ is the unique solution in $\mathring\cH^1_{2,2}((0,T)\times\domain)$ to
\begin{equation*}
\left\{
\begin{aligned}
-\frac{\partial}{\partial t}v+a^{ij}(\omega)v_{x^ix^j}&=\text{div}\big(a^{1j}(\omega)w_{x^j}(\omega),\ldots,a^{dj}(\omega)w_{x^j}(\omega)\big)^T  -f(\omega)\;&\text{ in }(0,T)\times\domain\\
v&=0&\text{ on }(0,T)\times\partial \domain
\end{aligned}
\right.
\end{equation*}
in the sense of \cite{DonKim2011}. Now we can apply \cite[Theorem 8.1]{DonKim2011} and use the fact that the coefficients $a^{ij}$ are uniformly bounded due to Assumption \ref{ass:coeff} to obtain
\begin{equation}\label{eq:Dong-ap}
\|D\bar u(\omega)\|_{L_q([0,T];L_p(\domain))}\leq N \left( \|Dw(\omega)\|_{L_q([0,T];L_p(\domain))}+\|f(\omega)\|_{L_q([0,T];L_p(\domain))}\right)
\end{equation}
for almost all $\omega\in\Omega$, where the constant $N$ does not depend on $\omega$. 
We remark that the assumption on the Lipschitz constant $K_0$ comes into play at this point: Theorem~8.1 in \cite{DonKim2011} implies that there exists a constant $c=c(d,p,q,\delta_0,K)$ such that, if  $K_0\leq c$, then estimate \eqref{eq:Dong-ap} holds. 
Integration w.r.t. $\bP$ and Hardy's inequality yield
\begin{equation}\label{exRes2-10}
\begin{aligned}
\|u\|_{\bH^{0,q}_{p,d-p}(\domain,T)}
&\leq N\,
\|Du\|_{\bH^{0,q}_{p,d}(\domain,T)}\\
&\leq N\,
\grklam{\|D\bar u\|_{\bH^{0,q}_{p,d}(\domain,T)}
+
\|Dw\|_{\bH^{0,q}_{p,d}(\domain,T)}}\\
&\leq N\,
\grklam{
\|f\|_{\bH^{0,q}_{p,d}(\domain,T)}
+
\|Dw\|_{\bH^{0,q}_{p,d}(\domain,T)}}.
\end{aligned}
\end{equation}

The term $\|Dw\|_{\bH^{0,q}_{p,d}(\domain,T)}$ can be estimated with the help of an inequality for stochastic integrals in UMD Banach spaces with type $2$ taken from \cite{NeeVerWei2007}.  
To this end, let $\gamma\nrklam{\ell_2,H^1_{p,d-p}(\domain)}$ denote the Banach space of $\gamma$-radonifying operators from $\ell_2$ to $H^1_{p,d-p}(\domain)$, see \cite{Nee2010b} for a survey on this class of operators. Furthermore, let $\{\mathbf{e}_k\}_{k\in\bN}$ be the standard orthonormal basis of the Hilbert space $\ell_2$. Then, the stochastic process
\begin{align*}
b:\Omega\times\neklam{0,T}\to \gamma\nrklam{\ell_2,H^1_{p,d-p}(\domain)},
\end{align*}
given by
\begin{align*}
b(\omega,t)\mathbf{a}:=\sum_{k=1}^\infty \langle\mathbf{a},\mathbf{e}_k\rangle_{\ell_2} g^k(\omega,t),
\qquad
\mathbf{a}\in\ell_2,
\end{align*}
is well-defined, and
\begin{equation*}
b\in L_q(\Omega\times[0,T],\cP,\wP\otimes dt;\gamma(\ell_2,H^1_{p,d-p}(\domain))).
\end{equation*}
Moreover,
\begin{equation}\label{eq:eqGamma}
\nnrm{b}{L_q(\Omega\times[0,T];\gamma(\ell_2,H^1_{p,d-p}(\domain)))}
\sim
\nnrm{g}{\bH^{1,q}_{p,d-p}(\domain,T;\ell_2)},
\end{equation}
see \cite[Remark~3.7]{CioDahKin+2011}. 
We remark that $H^{1}_{p,d-p}(\domain)=\mathring{W}^1_p(\domain)$ is a UMD Banach spaces with type $2$, since it is a closed subspace of the classical Sobolev space $W^1_p(\domain)$.
Therefore, by \cite[Corollary~3.10]{NeeVerWei2007}, $(b(t))_{t\in[0,T]}$ is $L_q$-stochastically integrable w.r.t.\ the 
$\ell_2$-cylindrical Brownian motion $(W_{\ell_2}(t))_{t\in[0,T]}$ given by
\begin{equation*}
W_{\ell_2}(t)\mathbf{a}
:=
\sum_{k=1}^\infty \langle\mathbf{a},\mathbf{e}_k\rangle_{\ell_2} w^k_t,
\qquad \mathbf{a}\in\ell_2.
\end{equation*}
By \cite[Corollary~3.9]{NeeVerWei2007}, for every $t\in[0,T]$,
\begin{equation}\label{eq:stochInt}
\int_0^t b(s) \, dW_{\ell_2}(s)
=
\sum_{k=1}^\infty \int_0^t g^k(s) \, dw^k_s = w(t) \qquad
\text{a.s.},
\end{equation}
where the series converges in $L_q(\Omega; H^{1}_{p,d-p}(\domain))$. Thus, we can apply \cite[Corollary~3.10]{NeeVerWei2007} and obtain
\begin{equation*}
\nnrm{w}{\bH^{1,q}_{p,d-p}(\domain,T)}^q
\leq N \,
\|b\|_{L_q(\Omega\times[0,T];\gamma(\ell_2,H^1_{p,d-p}(\domain)))}^q
\leq N\,
\|g\|_{\bH^{1,g}_{p,d-p}(\domain,T;\ell_2)}^q,
\end{equation*}
where we used \eqref{eq:eqGamma} for the last estimate.
Now let $e_1,\ldots,e_d$ be the standard orthonormal basis of $\bR^d$. Note that for every $i\in\{1,\ldots,d\}$, $D^{e_i}$ is a linear and bounded operator from $H^1_{p,d-p}(\domain)$ to the UMD space $L_p(\domain)
$, cf.\ Lemma~\ref{lem:collection}(iii) and (iv). 
Using this and similar arguments as above we obtain
\[D^{e_i}w(t)=\int_0^t D^{e_i}\circ b(s)\, dW_{\ell_2}(s),\]
with $D^{e_i}\circ b$ denoting the (point-wise) composition of the operators $D^{e_i}$ and $b$, and 
\begin{equation}\label{exRes2-12}
\begin{aligned}
\|Dw\|_{\bH^{0,q}_{p,d}(\domain,T)}
&=
\sum_{i=1}^d \nnrm{D^{e_i}w}{\bH^{0,q}_{p,d}(\domain,T)}\\
&=
\sum_{i=1}^d 
\Big\|\int_0^{(\cdot)} D^{e_i}\circ b(s)\,dW_{\ell_2}(s)\Big\|_{L_q(\Omega\times[0,T];L_p(\domain))}\\
&\leq N \,
\sum_{i=1}^d \|D^{e_i} \circ b\|_{L_q(\Omega\times[0,T];\gamma(\ell_2;L_p(\domain)))}\\
&\leq N \,
\|b\|_{L_q(\Omega\times[0,T];\gamma(\ell_2;H^1_{p,d-p}(\domain)))}\\
&\leq N \,
\|g\|_{\bH^{1,q}_{p,d-p}(\domain,T;\ell_2)}.
\end{aligned}
\end{equation}
Combining \eqref{exRes2-10} and \eqref{exRes2-12} and applying Corollary \ref{cor:liftReg} finishes the proof if $q=mp$ with $m\in\bN$. We can get rid of this restriction by following the lines of \cite[Proof of Theorem~2.1, p.~7]{Kry2000} and applying Marcinkiewicz's interpolation theorem.
\end{proof}

\subsection{An $L_q(L_p)$-theory of the heat equation on general bounded Lipschitz domains}\label{subsec:LqLp_heat}

In this subsection we present a first $L_q(L_p)$-theory for the stochastic heat equation
\begin{equation}\label{eq:heat}
du = ( \Delta u + f)\,dt + g^k \,dw^k_t, \qquad u(0,\cdot)=0,
\end{equation}
on general bounded Lipschitz domains $\domain\subseteq\bR^d$. 
We start by presenting the main result of this subsection, which we will prove later on.

\begin{thm}\label{thm:heat_LqLp}
Let $\domain$ be a bounded Lipschitz domain in $\bR^d$ and let $\gamma\geq 0$.
There exists an exponent $p_0$ with $p_0>3$ when $d\geq 3$ and $p_0>4$ when $d=2$ such that
for $p \in [2,p_0)$ and $p\leq q<\infty$, Eq.~\eqref{eq:heat}
has a unique solution $u\in\frH^{\gamma+2,q}_{p,d}(\domain,T)$, provided
\begin{equation*}
f\in\bH^{\gamma,q}_{p,d+p}(\domain,T) \cap \bH^{0,q}_{p,d}(\domain,T)
\qquad\text{and}\qquad
g\in\bH^{\gamma+1,q}_{p,d}(\domain,T;\ell_2) \cap \bH^{1,q}_{p,d-p}(\domain,T;\ell_2).
\end{equation*}
Moreover, there exists a constant $N\in(0,\infty)$, which does not depend on $f$ and $g$, such that
\begin{equation}\label{eq:heat_ap}
\nnrm{u}{\bH^{\gamma+2,q}_{p,d-p}(\domain,T)}^q
\leq N \grklam{
\nnrm{f}{\bH^{0,q}_{p,d}(\domain,T)}^q
+
\nnrm{f}{\bH^{\gamma,q}_{p,d+p}(\domain,T)}^q
+
\nnrm{g}{\bH^{\gamma+1,q}_{p,d}(\domain,T;\ell_2)}^q
}.
\end{equation}
\end{thm}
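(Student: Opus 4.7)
The plan is to proceed in parallel to the proof of Theorem~\ref{thm:LpLq}: first obtain a solution of \emph{low} $L_q(L_p)$-regularity via the semigroup/maximal-regularity approach, and then bootstrap to the target space $\frH^{\gamma+2,q}_{p,d}(\domain,T)$ using the lifting result Corollary~\ref{cor:liftReg}. Uniqueness is not the difficult part: since $\frH^{\gamma+2,q}_{p,d}(\domain,T)\hookrightarrow \frH^{2}_{2,d}(\domain,T)$ for $p,q\geq 2$, it follows immediately from the $L_p=L_2$ uniqueness statement in Theorem~\ref{thm:LpLp}.

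For existence with low regularity I would reformulate \eqref{eq:heat} as an abstract stochastic evolution equation on $L_p(\domain)$,
\begin{equation*}
dU(t)=\laplacedwp U(t)\,dt+f(t)\,dt+B(t)\,dW_{\ell_2}(t), \qquad U(0)=0,
\end{equation*}
where $\laplacedwp$ is the weak Dirichlet Laplacian on $L_p(\domain)$, $W_{\ell_2}$ is the $\ell_2$-cylindrical Brownian motion used already in the proof of Theorem~\ref{thm:LpLq}, and the operator $B(t)\in\gamma(\ell_2,\mathrm{Dom}((-\laplacedwp)^{1/2}))$ corresponds to $g(t)\in\ell_2$-valued data in the sense of \eqref{eq:eqGamma} and \eqref{eq:stochInt}. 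Since $L_p(\domain)$ is a UMD Banach space of type $2$ and $\laplacedwp$ admits a bounded $H^\infty$-calculus of angle less than $\pi/2$, the maximal $L_q$-regularity theory of \cite{NeeVerWei2012} applies and produces a unique solution $U$ together with the a-priori estimate
\begin{equation*}
\E\nnrm{\laplacedwp U}{L_q([0,T];L_p(\domain))}^{q} + \E\nnrm{(-\laplacedwp)^{1/2}U}{L_q([0,T];L_p(\domain))}^{q} \leq N\bigl(\nnrm{f}{\bH^{0,q}_{p,d}(\domain,T)}^{q}+\nnrm{B}{L_q(\Omega\times[0,T];\gamma(\ell_2,\mathrm{Dom}((-\laplacedwp)^{1/2})))}^{q}\bigr).
\end{equation*}

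The main obstacle — and the only place where the $p$-restriction enters — is to translate this abstract estimate back to our weighted Sobolev scale. The key input is Lemma~\ref{thm:laplacedwpsqrt}, which identifies $\mathrm{Dom}((-\laplacedwp)^{1/2})$ with $\mathring{W}^1_p(\domain)=H^1_{p,d-p}(\domain)$ (the last equality is Remark~\ref{kuf}). This Kato-type square-root identification on a bounded Lipschitz domain is known to be valid only on a range $[2,p_0)$, with $p_0>3$ in general and $p_0>4$ when $d=2$, which is exactly the range appearing in the statement. Once this is available, the right-hand side of the abstract estimate reduces to $\nnrm{f}{\bH^{0,q}_{p,d}(\domain,T)}^{q}+\nnrm{g}{\bH^{1,q}_{p,d-p}(\domain,T;\ell_2)}^{q}$ by \eqref{eq:eqGamma}, while on the left-hand side a combination of the identification of $\mathrm{Dom}(\laplacedwp)$ (via elliptic regularity and Remark~\ref{rem:NoMiddle}) together with Hardy's inequality yields, as in \eqref{exRes2-10},
\begin{equation*}
\nnrm{u}{\bH^{0,q}_{p,d-p}(\domain,T)}^{q} \leq N\bigl(\nnrm{f}{\bH^{0,q}_{p,d}(\domain,T)}^{q}+\nnrm{g}{\bH^{1,q}_{p,d-p}(\domain,T;\ell_2)}^{q}\bigr).
\end{equation*}

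With this low-regularity solution in hand, I would apply Corollary~\ref{cor:liftReg} (with $a^{ij}=\delta^{ij}$, $\sigma^{ik}=0$, and smoothness index $\gamma+2$) to conclude $u\in\frH^{\gamma+2,q}_{p,d}(\domain,T)$ together with the full a-priori estimate \eqref{eq:heat_ap}. Corollary~\ref{cor:liftReg} is stated under the restriction $q=mp$ for some $m\in\bN$; as in the last line of the proof of Theorem~\ref{thm:LpLq}, this is removed by Marcinkiewicz-type interpolation between consecutive integer multiples of $p$, following the strategy of \cite[proof of Theorem~2.1]{Kry2000}. The hard step is really Lemma~\ref{thm:laplacedwpsqrt} — everything else is essentially assembling ingredients already prepared in Sections~2 and~3 and the semigroup machinery of \cite{NeeVerWei2012}.
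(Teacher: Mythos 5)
Your proposal is correct and follows essentially the same route as the paper: reformulate \eqref{eq:heat} as an abstract evolution equation for $\laplacedwp$ on $L_p(\domain)$, invoke the maximal $L_q$-regularity results of van Neerven--Veraar--Weis with Lemma~\ref{thm:laplacedwpsqrt} (the square-root domain identification, which is indeed where the restriction $p<p_0$ enters) to get the low-regularity solution and a-priori bound, then lift via Corollary~\ref{cor:liftReg} and remove the $q=mp$ restriction by Marcinkiewicz interpolation. The only cosmetic difference is that the paper derives the base a-priori estimate from the mild-solution representation, treating the deterministic and stochastic convolutions separately, rather than quoting the full maximal-regularity estimate in one stroke.
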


For bounded $C^1$-domains $\gdomain\subseteq\bR^d$ this result has been already proven in \cite{Kim2009}. Unfortunately, the techniques used there will not work if the boundary is assumed to be just Lipschitz continuous. Therefore, we choose to take another way. We will mainly use the fact that the domain of the square root of the negative weak Dirichlet-Laplacian on $L_p(\domain)$ coincides with the closure of the test functions in the $L_p(\domain)$-Sobolev space of order one, at least for the range of $p$ allowed in our assertion. Before we prove this fact let us get more precise and introduce some notations and definitions.

Let $\domain$ be a bounded Lipschitz domain in $\bR^d$. As in \cite[Definition 3.1]{Woo2007}, for arbitrary $p\in(1,\infty)$, we define the weak Dirichlet-Laplacian $\laplacedwp$ on $L_p(\domain)$ as follows:
\begin{align*}
D(\laplacedwp) &:= \ggklam{u\in\mathring{W}^1_{p}(\domain) \, :\, \Delta u \in L_p(\domain)},	\\
\laplacedwp u &:= \Delta u = \delta_{ij} u_{x^ix^j},
\end{align*}
where $\delta_{ij}$ denotes the Kronecker symbol.
If we fix $p\in(p_0/(p_0-1),p_0)$ with $p_0=4+\delta$ when $d=2$ and $p_0=3+\delta$ when $d\geq 3$ where $\delta>0$ is taken from \cite[Proposition 4.1]{Woo2007}, then, the unbounded operator $\laplacedwp$ generates a strongly continuous, analytic semigroup $\ggklam{S_{p}(t)}_{t\geq0}$ of contractions on $L_p(\domain)$, see \cite[Theorem 3.8 and Corollary 4.2]{Woo2007}.
Thus, $(-\laplacedwp)^{1/2}$, the square root of the negative of $\laplacedwp$, can be defined as the inverse of the operator
\begin{equation}\label{eq:laplacedwpnegsqrtinv}
(-\laplacedwp)^{-1/2}:=\pi^{-1/2}\int_0^\infty t^{-1/2} S_p(t) \, dt : L_p(\domain) \to L_p(\domain)
\end{equation}
with domain
\begin{equation*}
D((-\laplacedwp)^{1/2}) := \text{Range}((-\laplacedwp)^{-1/2}),
\end{equation*}
see \cite[Chapter 2.6]{Paz1983}.
Endowed with the norm
\begin{equation*}
\nnrm{u}{D((-\laplacedwp)^{1/2})}:=\nnrm{(-\laplacedwp)^{1/2}u}{L_p(\domain)},
\quad
u\in D((-\laplacedwp)^{1/2}),
\end{equation*}
$D((-\laplacedwp)^{1/2})$ becomes a Banach space. Exploiting the fundamental results from \cite{Woo2007} and \cite{JerKen1995}, we can prove the following identity, which is crucial if we want to apply the results from \cite{NeeVerWei2012} in our setting. 

\begin{lemma}\label{thm:laplacedwpsqrt}
Let $\domain$ be a bounded Lipschitz domain in $\bR^d$. There is an exponent $p_0$ with $p_0>4$ when $d=2$ and $p_0>3$ when $d\geq3$ such that if $p\in[2,p_0)$
\begin{equation*}
D((-\laplacedwp)^{1/2}) = \mathring{W}^1_{p}(\domain)
\end{equation*}
with equivalent norms.
\end{lemma}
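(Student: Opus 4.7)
The plan is to establish the identification $D((-\laplacedwp)^{1/2}) = \mathring{W}^1_p(\domain)$ in two stages: first handle the Hilbert space case $p=2$ via classical form theory, and then extrapolate to the range $p \in (2, p_0)$ by means of Riesz transform bounds on Lipschitz domains, the crucial input being the sharp regularity results of Jerison--Kenig together with the semigroup properties from Wood cited in the lemma.

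For $p=2$, the operator $-\laplacedw$ on $L_2(\domain)$ is the non-negative self-adjoint operator associated with the closed, symmetric, densely defined form
\[
\mathfrak{a}(u,v) := \int_\domain \nabla u \cdot \nabla v \, dx, \qquad u,v \in \mathring{W}^1_2(\domain).
\]
By Kato's second representation theorem, $D((-\laplacedw)^{1/2})$ coincides with the form domain $\mathring{W}^1_2(\domain)$ and $\|(-\laplacedw)^{1/2} u\|_{L_2(\domain)}^2 = \mathfrak{a}(u,u)$, yielding the claim (in fact with equality of norms) in this case.

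For $p \in (2, p_0)$, I would establish the identification through two continuous embeddings. The inclusion $D((-\laplacedwp)^{1/2}) \hookrightarrow \mathring{W}^1_p(\domain)$ reduces to the Riesz transform bound $\|\nabla (-\laplacedwp)^{-1/2} f\|_{L_p(\domain)} \leq N \|f\|_{L_p(\domain)}$ for $f \in L_p(\domain)$; combined with the $L_p$-boundedness of $(-\laplacedwp)^{-1/2}$ already built into its definition in \eqref{eq:laplacedwpnegsqrtinv}, this controls the full $W^1_p$-norm. The vanishing boundary values are inherited from the density of $C^\infty_0(\domain)$ in $D((-\laplacedwp)^{1/2})$, which follows because $D(\laplacedwp) \subseteq D((-\laplacedwp)^{1/2})$ is dense in the graph norm and $C^\infty_0(\domain)$ forms a core for $\laplacedwp$. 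The reverse inclusion $\mathring{W}^1_p(\domain) \hookrightarrow D((-\laplacedwp)^{1/2})$ would then be obtained by duality: the $L_p$-adjoint of $\laplacedwp$ is $\Delta^D_{p',w}$ on $L_{p'}(\domain)$, and since $p \in [2, p_0)$ forces $p' \in (p_0/(p_0-1), 2]$, the Riesz transform bound for $-\Delta^D_{p',w}$ is covered by the first direction applied with the exponent $p'$; dualizing then yields the embedding.

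The main obstacle is verifying the Riesz transform bound $\|\nabla(-\laplacedwp)^{-1/2} f\|_{L_p} \leq N \|f\|_{L_p}$ for all $p$ in the stated range. On a general bounded Lipschitz domain this bound fails for very large $p$, and the threshold $p_0$ with $p_0 > 3$ when $d \geq 3$ and $p_0 > 4$ when $d = 2$ is sharp. It matches exactly the exponent coming from the $L_p$-regularity theory of Jerison--Kenig \cite{JerKen1995} for the Dirichlet Laplacian on Lipschitz domains, which is also the source of the sectoriality and analyticity estimates underlying \cite[Proposition 4.1]{Woo2007}. Once this regularity input is in hand, combining it with the analyticity and contractivity of $\{S_p(t)\}_{t\geq 0}$ and the integral representation of $(-\laplacedwp)^{-1/2}$ closes the argument for the full range of $p$.
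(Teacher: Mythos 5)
Your first embedding, $D((-\laplacedwp)^{1/2}) \hookrightarrow \mathring{W}^1_{p}(\domain)$, is essentially the paper's argument: consistency of the semigroups for different exponents plus the Jerison--Kenig bound for $\nabla(-\laplacedw)^{-1/2}$ on $L_p(\domain)$. (One small simplification: \cite[Theorem~7.5(a)]{JerKen1995} already maps $L_p(\domain)$ into $\mathring{W}^1_p(\domain)$, so you do not need the separate claim that $C^\infty_0(\domain)$ is a core for $\laplacedwp$ -- a claim that is not obviously justified on a Lipschitz domain and is best avoided.) For the reverse embedding you take a genuinely different route. The paper uses positivity of the semigroup to invoke Kalton--Weis and obtain a bounded $H^\infty$-calculus, hence bounded imaginary powers, hence the complex interpolation identity $[L_p(\domain),D(-\laplacedwp)]_{1/2}=D((-\laplacedwp)^{1/2})$; it then identifies $\mathring{W}^1_p(\domain)=H^1_{p,d-p}(\domain)=[H^0_{p,d}(\domain),H^2_{p,d-2p}(\domain)]_{1/2}$ via Lototsky's interpolation result and embeds the weighted scale into the operator scale. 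You instead run the classical reverse-Riesz-transform duality: for $u\in C^\infty_0(\domain)$ pair $(-\laplacedwp)^{1/2}u$ against $g\in L_{p'}(\domain)$, integrate by parts, and use the Riesz transform bound for the adjoint operator on $L_{p'}$. This is a valid and arguably more elementary alternative, but two points need tightening. First, the needed bound is for $p'\in(1,2]$, which is \emph{not} literally "the first direction applied with exponent $p'$", since you set that direction up only for exponents in $[2,p_0)$; you must appeal to the fact that the Jerison--Kenig range $(1,3+\varepsilon)$ covers the dual exponents as well. Second, the identification of the $L_{p'}$-adjoint of $(-\laplacedwp)^{1/2}$ with $(-\Delta^D_{p',w})^{1/2}$ deserves a line of justification (it follows from consistency and the integral representation \eqref{eq:laplacedwpnegsqrtinv}, but it is not automatic). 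What your approach buys is independence from the $H^\infty$-calculus machinery and from the interpolation theory of the weighted spaces $H^\gamma_{p,\theta}(\domain)$; what the paper's approach buys is that the interpolation identity \eqref{eq:domaincomplex} is reused later in the proof of Theorem~\ref{thm:heat_LqLp}, so it is not wasted effort there.
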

\begin{proof}
We fix $p\in[2,p_0)$ with $p_0=4+\delta$ when $d=2$ and $p_0=3+\delta$ when $d\geq 3$ where $\delta>0$ is taken from \cite[Proposition 4.1]{Woo2007}.
We start with the proof of the embedding
\begin{equation}\label{eq:embedd002}
D((-\laplacedwp)^{1/2}) \hookrightarrow \mathring{W}^1_{p}(\domain).
\end{equation}
By \cite[Proposition 4.1]{Woo2007} the semigroups $\ggklam{S_2(t)}_{t\geq0}$ and $\ggklam{S_p(t)}_{t\geq0}$ are consistent, i.e., for all $t\geq0$,
\begin{equation*}
S_2(t)f = S_p(t)f,\qquad f\in L_p(\domain).
\end{equation*}
Using \eqref{eq:laplacedwpnegsqrtinv}, this leads to
\begin{equation*}
(-\laplacedwp)^{-1/2}f = (-\laplacedw)^{-1/2}f,\qquad f\in L_p(\domain),
\end{equation*}
which implies implies the boundedness of the operator
\begin{equation*}
(-\laplacedwp)^{-1/2} : L_p(\domain) \to \mathring{W}^1_p(\domain),
\end{equation*}
since the restriction of $(-\laplacedw)^{-1/2}$ to $L_p(\domain)$ is bounded from $L_p(\domain)$ to $\mathring{W}^1_p(\domain)$, see \cite[Theorem~7.5(a)]{JerKen1995}. Thus
\begin{equation*}
\nnrm{(-\laplacedwp)^{-1/2}f}{\mathring{W}^1_p(\domain)}
\leq N\,
\nnrm{f}{L_p(\domain)},
\qquad f\in L_p(\domain).
\end{equation*}
Consequently, for any $g\in\text{Range}((-\laplacedwp)^{-1/2})=D((-\laplacedwp)^{1/2})$,
\begin{equation*}
\nnrm{g}{\mathring{W}^1_p(\domain)}
\leq N\,
\nnrm{(-\laplacedwp)^{1/2}g}{L_p(\domain)}
= N\,
\nnrm{g}{D((-\laplacedwp)^{1/2})},
\end{equation*}
with a constant $N$ independent of $g$. Embedding \eqref{eq:embedd002} follows.

It remains to prove the converse direction, i.e.,
\begin{equation*}
\mathring{W}^1_p(\domain)
\hookrightarrow
D((-\laplacedwp)^{1/2}).
\end{equation*}
To this end, we first notice that the strongly continuous analytic contraction-semigroup $\ggklam{S_p(t)}_{t\geq 0}$ on $L_p(\domain)$ is positive in the sense of \cite[p.\ 353]{EngNag2000}, see \cite[Lemma 4.4]{Woo2007}. Therefore, by \cite[Corollary~5.2]{KalWei2001}, $(-\laplacedwp)$ has a bounded $H^\infty$-calculus of angle less than $\pi/2$. Consequently, it has bounded imaginary powers. This implies
\begin{equation}\label{eq:domaincomplex}
\geklam{L_p(\domain),D(-\laplacedwp)}_{1/2} = D((-\laplacedwp)^{1/2}),
\end{equation}
see \cite[Theorem~1.15.3]{Tri1995}. By Remark~\ref{kuf} and \cite[Proposition~2.4]{Lot2000},
\begin{equation*}
\mathring{W}^1_p(\domain)
=
H^1_{p,d-p}(\domain)
=
\geklam{H^0_{p,d}(\domain),H^2_{p,d-2p}(\domain)}_{1/2}.
\end{equation*}
Also, by the definition of the weighted Sobolev spaces one easily sees that
\begin{equation*}
H^0_{p,d}(\domain) = L_p(\domain) \qquad\text{and}\qquad H^2_{p,d-2p}(\domain)\hookrightarrow D(-\laplacedwp).
\end{equation*}
Combining these results, we obtain
\begin{equation*}
\mathring{W}^1_p(\domain)
=
\geklam{H^0_{p,d}(\domain),H^2_{p,d-2p}(\domain)}_{1/2}
\hookrightarrow
\geklam{L_p(\domain),D(-\laplacedwp)}_{1/2}
=
D((-\laplacedwp)^{1/2}),
\end{equation*}
which finishes the proof.
\end{proof}

Now we are ready to prove our main result in this subsection.

\begin{proof}[Proof of Theorem~\ref{thm:heat_LqLp}]
As in the proof of Lemma~\ref{thm:laplacedwpsqrt} we fix $p\in[2,p_0)$ with $p_0=4+\delta$ when $d=2$ and $p_0=3+\delta$ when $d\geq 3$ where $\delta>0$ is taken from \cite[Proposition 4.1]{Woo2007}. We start with the case $\gamma=0$. I.e., we assume that $f\in\bH^{0,q}_{p,d}(\domain,T)$ and $g\in\bH^{1,q}_{p,d-p}(\domain,T;\ell_2)$. 
Recall the corresponding notations $(b(t))_{t\in[0,T]}$, $(w(t))_{t\in[0,T]}$ and $W_{\ell_2}$ introduced in the proof of Theorem~\ref{thm:LpLq} and their properties. 
By Lemma~\ref{thm:laplacedwpsqrt} and Eq.~\eqref{eq:domaincomplex} we have
\begin{equation*}
\geklam{L_p(\domain),D(-\laplacedwp)}_{1/2} = H^{1}_{p,d-p}(\domain).
\end{equation*}
Also, $D(\laplacedwp)\hookrightarrow L_p(\domain)$ densely, since $C^\infty_0(\domain)$ is contained in $D(\laplacedwp)$.
Therefore, we can apply \cite[Theorem~4.5(ii)]{NeeVerWei2012} and obtain the existence of a stochastic process
\begin{equation}\label{eq:sol_str}
u\in L_q(\Omega\times[0,T],\cP,\wP\otimes dt;D(-\laplacedwp))
\end{equation}
solving the stochastic evolution equation
\begin{equation*}
\left\{
\begin{aligned}
du(t) - \laplacedwp u(t) \, dt &= f(t)\, dt + b(t)\, dW_{\ell_2}(t),\qquad t\in[0,T]\\
u(0)&=0
\end{aligned}
\right.
\end{equation*}
in the sense of \cite[Definition~4.2]{NeeVerWei2012} with $X_0:=L_p(\domain)$.
Moreover, there exists a version $\tilde{u}$ of $u$, such that the following equality is fulfilled in $L_p(\domain)$ a.s.\ for all $t\in[0,T]$ at once:
\begin{equation*}
\tilde{u}(t)
=
\int_0^t \Delta \tilde{u}(s) \, ds
+
\int_0^t f(s) \, ds
+
\int_0^t b(s) \, dW_{\ell_2}(s).
\end{equation*}
We can fix a continuous versions of the stochastic process $(w(t))_{t\in[0,T]}$, so that by \eqref{eq:stochInt} a.s.\
\begin{equation*}
\tilde{u}(t)
=
\int_0^t \Delta \tilde{u}(s) \, ds
+
\int_0^t f(s) \, ds
+
w(t)
\qquad \text{for all } t\in[0,T] \text{ in } L_p(\domain).
\end{equation*}
Therefore, by Lemma~\ref{thm:laplacedwpsqrt} and \eqref{eq:sol_str}, $u\in\frH^{1,q}_{p,d}(\domain,T)$ and solves Eq.\ \eqref{eq:heat} in the sense of Definition~\ref{def:solution}. Since $\frH^{1,q}_{p,d}(\domain,T)\hookrightarrow \frH^{1,2}_{2,d}(\domain,T)$, the uniqueness follows from \cite[Theorem~2.12]{Kim2011}.
Thus, in order to finish the proof of the basic case $\gamma=0$, we show the a priori estimate
\begin{equation}\label{eq:heat_0}
\nnrm{u}{\bH^{1,q}_{p,d-p}(\domain,T)}^q
\leq N\,
\grklam{
\nnrm{f}{\bH^{0,q}_{p,d}(\domain,T)}^q
+
\nnrm{g}{\bH^{1,q}_{p,d}(\domain,T;\ell_2)}^q
},
\end{equation}
which implies estimate \eqref{eq:heat_ap} for $\gamma=0$.
To this end we will use the fact that the stochastic process $V:[0,T]\times\Omega\to L_p(\domain)$ defined as
\begin{equation*}
V(t)
:=
\int_0^t S_p(t-s) f(s) \, ds
+
\int_0^t S_p(t-s) b(s) \, dW_{\ell_2}(s),
\qquad t\in [0,T],
\end{equation*}
is a version of $u$, see \cite[Proposition~4.4]{NeeVerWei2012}. Since $-\laplacedwp$ has the (deterministic) maximal regularity property (see \cite[Proposition~6.1]{Woo2007}) and $0\in\rho(\laplacedwp)$, we obtain
\begin{equation}\label{eq:heat_f}
\E \reklam{
\sgnnrm{t\mapsto \int_0^t S_p(t-s) f(s) \, ds}{L_q(0,T;H^1_{p,d-p}(\domain))}^q}
\leq N\,
\nnrm{f}{\bH^{0,q}_{p,d}(\domain,T)}^q,
\end{equation}
where we used again Lemma~\ref{thm:laplacedwpsqrt} and \cite[Theorem~9.7]{Kuf1980}. Simultaneously, notice that  $-\laplacedwp$ and $g$ (respectively $b$) fulfil the assumptions of \cite[Theorem~1.1]{NeeVerWei2012b}; we have already checked them in our explanations above. Thus, applying this result, we obtain
\begin{equation}\label{eq:heat_g}
\E \reklam{
\sgnnrm{t\mapsto \int_0^t S_p(t-s) b(s) \, dW_{\ell_2}(s)}{L_q(0,T;H^1_{p,d-p}(\domain))}^q}
\leq N\,
\nnrm{g}{\bH^{0,q}_{p,d}(\domain,T;\ell_2)}^q.
\end{equation}
The constants in \eqref{eq:heat_f} and \eqref{eq:heat_g} do not depend on $f$ and $g$. Therefore, using the last two estimates we obtain the existence of a constant $N$, independent of $f$ or $g$, such that
\begin{equation*}
\nnrm{V}{\bH^{1,q}_{p,d-p}(\domain,T)}^q
\leq N\,
\sgrklam{
\nnrm{f}{\bH^{0,q}_{p,d}(\domain,T)}^q
+
\nnrm{g}{\bH^{0,q}_{p,d}(\domain,T;\ell_2)}^q
}.
\end{equation*}
Since $V$ is just a version of the solution $u$, Eq.~\eqref{eq:heat_0} follows.

In order to prove the assertion for arbitrary $\gamma\geq 0$ and $2\leq p \leq q <\infty$ we can argue as we have done at the end of the proof of Theorem~\ref{thm:LpLq}.
\end{proof}

\setcounter{equation}{0}
\section{H{\"o}lder-Sobolev regularity of elements of $\frH^{\gamma,q}_{p,\theta}(\domain,T)$ and implications for SPDEs}\label{StWS}

In this section we analyse the temporal H\"older regularity of functions in $\frH^{\gamma,q}_{p,\theta}(\domain,T)$, where $\domain$ is a bounded Lipschitz domain in $\bR^d$. 
Our main interest lies on the case $q\neq p$.
As an application, we obtain H\"older-Sobolev regularity for the solutions to SPDEs presented in Section~\ref{SPDEs}.
In combination with the Sobolev type embeddings for the spaces $H^\gamma_{p,\theta}(\domain)$ from the Section~\ref{WS}, we also obtain assertions concerning the H{\"o}lder regularity in time and space for elements of $\frH^{\gamma,q}_{p,\theta}(\domain,T)$ (Corollary \ref{cor:Schauder}). Here is the main result of this section. 

\begin{thm}\label{thm:mainHoelder}
Let  $\domain$ be a bounded Lipschitz domain in $\bR^d$. Let 
$2\leq p \leq q < \infty$,
$\gamma \in \bN$,
$\theta \in\bR$, 
and $u\in \frH^{\gamma,q}_{p,\theta}(\domain,T)$.
Moreover, let
$$
2/q <\bar{\beta}<\beta \leq 1.
$$
Then there exists a constant $N$, which does not depend on $T$ and $u$, such that
\begin{equation}\label{eq:mH01}
\begin{aligned}
\E[\psi^{\beta-1}&u]^q_{C^{\bar{\beta}/2-1/q}([0,T]; H^{\gamma-\beta}_{p,\theta}(\domain))}\\
&\phantom{nnnnnnn}\leq N T^{(\beta-\bar{\beta})q/2}
\sgrklam{\|u\|^q_{\bH^{\gamma,q}_{p,\theta-p}(\domain,T)}+\|\bD u\|^q_{\bH^{\gamma-2,q}_{p,\theta+p}(\domain,T)}+\|\bS u\|^q_{\bH^{\gamma-1,q}_{p,\theta}(\domain,T;\ell_2)}},
\end{aligned}
\end{equation}
and
\begin{equation}\label{eq:mH02}
\begin{aligned}
\E\nnrm{\psi^{\beta-1}u}{C^{\bar{\beta}/2-1/q}([0,T];H^{\gamma-\beta}_{p,\theta}(\domain))}^q
\leq  N & T^{(\beta-\bar{\beta})q/2}
\sgrklam{\E\nnrm{\psi^{\beta-1}u(0,\cdot)}{H^{\gamma-\beta}_{p,\theta}(\domain)}^q +\\ &\|u\|^q_{\bH^{\gamma,q}_{p,\theta-p}(\domain,T)}+\|\bD u\|^q_{\bH^{\gamma-2,q}_{p,\theta+p}(\domain,T)}+\|\bS u\|^q_{\bH^{\gamma-1,q}_{p,\theta}(\domain,T;\ell_2)}}.
\end{aligned}
\end{equation}
\end{thm}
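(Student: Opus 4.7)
The strategy is to reduce to the corresponding estimate on the half-space $\bR^d_+$, which is the content of \cite{Kry2000} with $\psi(x) = x^1$, via a localization and boundary-flattening argument analogous to the one used in \cite{Kim2011} for the case $q = p$. Observe first that \eqref{eq:mH02} is an immediate consequence of \eqref{eq:mH01} together with the trivial inequality
$$
\sup_{t \in [0,T]}\|v(t)\|_{X} \leq \|v(0)\|_X + T^{\bar{\beta}/2 - 1/q}\,[v]_{C^{\bar{\beta}/2 - 1/q}([0,T];X)},
$$
applied to $v = \psi^{\beta-1}u$ and $X = H^{\gamma-\beta}_{p,\theta}(\domain)$. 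Hence the plan is to focus on the seminorm estimate \eqref{eq:mH01}.

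The reduction to $\bR^d_+$ proceeds as follows. Choose a finite open cover $U_0, U_1, \dots, U_M$ of $\overline{\domain}$ with $U_0 \Subset \domain$ and, for each $i \geq 1$, $U_i$ a small ball centered at a point of $\partial\domain$ chosen so that $U_i \cap \domain$ is mapped by a bi-Lipschitz map $\Phi_i$ onto a subset of $\bR^d_+$ with $\Phi_i(U_i \cap \partial\domain) \subseteq \partial\bR^d_+$; such maps exist by Definition \ref{domain}. Let $\{\eta_i\}_{i=0}^{M}$ be a smooth partition of unity subordinate to $\{U_i\}$. Writing $u = \sum_{i=0}^M \eta_i u$ and noting that $\bD(\eta_i u) = \eta_i \bD u$ and $\bS(\eta_i u) = \eta_i \bS u$ (since $\eta_i$ is time-independent), it suffices to estimate each piece $\eta_i u$ separately. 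For the interior piece $\eta_0 u$, the support is separated from $\partial\domain$, so $\psi$ and all its relevant powers are bounded above and below on it, and the weighted norms in \eqref{eq:mH01} are locally equivalent to the unweighted norms of the corresponding quantities; the estimate then follows from the analogous H\"older estimate on the whole space for the spaces $\mathscr{H}^{\gamma,q}_p(T)$ of Remark \ref{rem:Rd-spaces}. For each $i \geq 1$, push $\eta_i u$ forward by $\Phi_i$ to obtain a process $\tilde u_i := (\eta_i u) \circ \Phi_i^{-1}$ supported in $\Phi_i(U_i) \cap \overline{\bR^d_+}$, extend by zero to all of $\bR^d_+$, and apply the half-space version of \eqref{eq:mH01} to $\tilde u_i$. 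Pulling back via $\Phi_i$ and summing over $i$ then yields the claim.

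The main obstacle is to make the change of variables $\Phi_i$ compatible with the weighted Sobolev space structure for arbitrary $\gamma \in \bN$. Since $\partial \domain$ is only Lipschitz, the map $\Phi_i$ has only one classical derivative near the boundary, so for $\gamma \geq 2$ the chain rule produces higher-order derivatives of $\Phi_i^{-1}$ that are not well-defined in the classical sense. The standard remedy is to choose $\Phi_i$ smooth away from $\partial\domain$ with growth $|D^k \Phi_i^{-1}(y)| \lesssim \rho_{\bR^d_+}(y)^{1-k}$ for $k \geq 2$; this is precisely the growth that the equivalent norm of $H^\gamma_{p,\theta}$ from Lemma \ref{lem:collection} absorbs through the weight factor $\rho^{|\alpha|}$ in front of each derivative. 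Verifying the continuity of the induced map between $\frH^{\gamma,q}_{p,\theta}(\domain,T)$ and $\frH^{\gamma,q}_{p,\theta}(\bR^d_+,T)$ for all $\gamma \in \bN$, and checking that the $T$-dependence in the factor $T^{(\beta-\bar{\beta})q/2}$ on the right-hand side is transferred unchanged from the half-space case, are the delicate technical points of the argument.
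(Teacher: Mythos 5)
Your localization and boundary-flattening plan coincides with the paper's argument only for the base case $\gamma=1$; for $\gamma\geq 2$ your route has a genuine gap. The paper's change-of-variables result, Lemma \ref{lem:Transf_rule}, is stated and proved only for smoothness indices in $[-1,1]$, precisely because the flattening map $x\mapsto(x^1-\mu_0(x'),x')$ of a Lipschitz graph is merely bi-Lipschitz. You acknowledge this and propose to repair it with a regularized flattening map that is smooth away from the boundary with $|D^k\Phi_i^{-1}(y)|\lesssim\rho(y)^{1-k}$; but you neither construct such a map (it must simultaneously flatten the boundary, preserve comparability of boundary distances, and have controlled Jacobian as in Lemma \ref{lem:Transf_rule}), nor prove that it induces isomorphisms of $H^{\gamma}_{p,\theta}$ for all $\gamma\in\bN$, nor establish the analogue of Lemma \ref{pullbackEqn} for the transformed equation at higher smoothness. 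This is the hardest part of your argument and it is left as a remark. The paper avoids it entirely: it proves \eqref{eq:mH01} for $\gamma=1$ by essentially your localization/flattening scheme (using Lemma \ref{lem:Transf_rule}, Lemma \ref{pullbackEqn}, the half-space estimate of Krylov for the boundary pieces and the whole-space estimate for the interior piece), and then runs an induction on $\gamma$: if $u\in\frH^{n+1,q}_{p,\theta}(\domain,T)$, then $v:=\psi u_x\in\frH^{n,q}_{p,\theta}(\domain,T)$ with $dv=\psi(\bD u)_x\,dt+\psi(\bS^k u)_x\,dw^k_t$, and Lemma \ref{lem:collection}(iii) and (iv) control the $H^{n+1-\beta}_{p,\theta}$-norm of $u$ by the $H^{n-\beta}_{p,\theta}$-norms of $u$ and $v$, so the induction hypothesis applies. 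Adopting this reduction makes any higher-order transformation theory unnecessary.

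A secondary point: your derivation of \eqref{eq:mH02} from \eqref{eq:mH01} via $\sup_{t}\|v(t)\|_X\leq\|v(0)\|_X+T^{\bar{\beta}/2-1/q}[v]_{C^{\bar{\beta}/2-1/q}([0,T];X)}$ does not reproduce the stated right-hand side, in which the single factor $T^{(\beta-\bar{\beta})q/2}$ multiplies all four terms including the initial datum: your argument yields an initial-value term without that factor and an additional $T$-dependent factor in front of the seminorm contribution, so the resulting constant $N$ would depend on $T$. The paper instead obtains \eqref{eq:mH02} by running the same induction with the norm (rather than seminorm) version of the half-space estimate.
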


Theorem \ref{thm:mainHoelder} and Lemma \ref{lem:collection}(ii) yield the following so-called interior Schauder estimates of functions in $\frH^{\gamma,q}_{p,\theta}(\domain,T)$.

\begin{corollary}\label{cor:Schauder}
Given the setting of Theorem \ref{thm:mainHoelder}, let $u\in \frH^{\gamma,q}_{p,\theta}(\domain,T)$
and
$
\gamma-\beta-d/p=k+\varepsilon
$
where $k\in\bN_0$ and $\varepsilon\in (0,1]$. Then for
$\nu:=\beta-1+\theta/p$ and multi-indices $i,j\in\bN_0^d$ such that
$|i|\leq k$ and $|j|=k$, we have
\begin{equation*}
\begin{aligned}
\E&\sgeklam{\sup_{t\neq s}|t-s|^{-(\bar{\beta} q/2-1)}
\sgrklam{|\psi^{\nu+|i|}D^i(u(t,\cdot)-u(s,\cdot))|^q_{C(\domain)}
+[\psi^{\nu+|j|+\varepsilon}
D^j(u(t,\cdot)-u(s,\cdot))]^q_{C^{\varepsilon}(\domain)}}} \\
&\leq N 
\sgrklam{\|u\|^q_{\bH^{\gamma,q}_{p,\theta-p}(\domain,T)}+\|\bD u\|^q_{\bH^{\gamma-2,q}_{p,\theta+p}(\domain,T)}+\|\bS u\|^q_{\bH^{\gamma-1,q}_{p,\theta}(\domain,T;\ell_2)}},
\end{aligned}
\end{equation*}
where the constant $N\in(0,\infty)$ does not depend on $u$.
In particular, if $u_0=0, \gamma\geq 1$, $\theta\leq d$ and
$r_0:=1-2/q-d/p>0$, then
 for any $r\in (0,r_0)$
\begin{align*}
&\E\reklam{ \sup_{t\leq T} \sup_{x,y\in
\domain}\frac{|u(t,x)-u(t,y)|^q}{|x-y|^{rq}}} <\infty, \\
&\E \reklam{\sup_{x\in \domain}\sup_{t,s\leq T}\frac{|u(t,x)-u(s,x)|^q}{|t-s|^{r q/2}}} <\infty,
\end{align*}
see \cite[Remark 4.8]{Kry2001} for details concerning the last implication.
\end{corollary}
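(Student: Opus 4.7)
The plan is to combine Theorem~\ref{thm:mainHoelder}, which bounds the $L_q(\Omega)$-norm of the temporal $C^{\bar\beta/2-1/q}$ H{\"o}lder seminorm of $\psi^{\beta-1}u$ viewed as a process with values in $H^{\gamma-\beta}_{p,\theta}(\domain)$, with the pointwise weighted Sobolev embedding in Lemma~\ref{lem:collection}(ii) applied (for each fixed $\omega\in\Omega$ and each pair $s,t\in[0,T]$) to the spatial increment $v:=u(t,\cdot)-u(s,\cdot)$.

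First, by Lemma~\ref{lem:collection}(iv) applied with exponent $1-\beta$, the space $\psi^{\beta-1}H^{\gamma-\beta}_{p,\theta}(\domain)$ coincides with $H^{\gamma-\beta}_{p,\theta'}(\domain)$, where $\theta':=\theta+p(\beta-1)$, and the norm equivalence
$$
\|v\|_{H^{\gamma-\beta}_{p,\theta'}(\domain)}\sim\|\psi^{\beta-1}v\|_{H^{\gamma-\beta}_{p,\theta}(\domain)}
$$
holds uniformly in $v$. Hence the H{\"o}lder seminorm controlled by Theorem~\ref{thm:mainHoelder} dominates, up to a constant, the analogous seminorm of $u$ itself in $H^{\gamma-\beta}_{p,\theta'}(\domain)$. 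Next, for each $\omega$ and each pair $(s,t)$, the hypothesis $(\gamma-\beta)-d/p=k+\varepsilon$ places me in the range of Lemma~\ref{lem:collection}(ii); applied to $v\in H^{\gamma-\beta}_{p,\theta'}(\domain)$, it yields
$$
|\psi^{|i|+\theta'/p}D^iv|_{C(\domain)}+[\psi^{k+\varepsilon+\theta'/p}D^jv]_{C^\varepsilon(\domain)}\leq N\,\|v\|_{H^{\gamma-\beta}_{p,\theta'}(\domain)}
$$
for all multi-indices $i,j$ with $|i|\leq k$ and $|j|=k$. The crucial observation is the identity $\theta'/p=\theta/p+(\beta-1)=\nu$, so the weight exponents on the left are exactly $\nu+|i|$ and $\nu+|j|+\varepsilon$, matching the statement.

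To conclude the first display, I would raise the pointwise bound to the $q$-th power, divide by $|t-s|^{\bar\beta q/2-1}$, take the supremum over $s\neq t$ (which commutes with $x\mapsto x^q$), and take expectation. Chaining the resulting inequality with the norm equivalence above and with estimate \eqref{eq:mH01} of Theorem~\ref{thm:mainHoelder} yields the desired $L_q(\Omega)$ bound.

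For the second display, under the additional hypotheses $u_0=0$, $\gamma\geq 1$, $\theta\leq d$ and $r\in(0,r_0)$ with $r_0=1-2/q-d/p>0$, I would calibrate $\bar\beta<\beta\leq 1$ and choose $k=0$, $\varepsilon\in(0,1]$ so that both $\bar\beta/2-1/q>r/2$ and $\gamma-\beta-d/p=\varepsilon>r$; the strict inequality $r<r_0$ together with $\gamma\geq 1$ makes such a calibration possible, and $\theta\leq d$ ensures the weight exponent $\nu$ is appropriately controlled. The passage from the weighted H{\"o}lder bounds of the first display to the unweighted global H{\"o}lder continuity in $t$ and $x$ then follows exactly as in \cite[Remark~4.8]{Kry2001}, exploiting $u(0,\cdot)=0$ and the elementary inequality $|\psi(x)^a-\psi(y)^a|\leq N|x-y|^a$ for $a\in(0,1]$. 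The main obstacle I foresee is merely the careful identification $\theta'/p=\nu$ and the parameter calibration in the second display; the remainder is a routine chaining of the named lemmas.
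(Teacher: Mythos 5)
Your proof is correct and follows essentially the route the paper intends: the paper gives no written argument beyond the remark that Theorem~\ref{thm:mainHoelder} and Lemma~\ref{lem:collection}(ii) yield the corollary, and your chain --- estimate \eqref{eq:mH01}, the identification $\psi^{\beta-1}H^{\gamma-\beta}_{p,\theta}(\domain)=H^{\gamma-\beta}_{p,\theta+p(\beta-1)}(\domain)$ with $\theta'/p=\nu$ via Lemma~\ref{lem:collection}(iv), then the pointwise embedding of Lemma~\ref{lem:collection}(ii) applied to $u(t,\cdot)-u(s,\cdot)$ --- is exactly the intended argument. One small caveat on the final two displays: a single calibration of $(\bar{\beta},\beta)$ satisfying both $\bar{\beta}>r+2/q$ and $\gamma-\beta-d/p>r$ simultaneously only reaches $r<r_0/2$ when $\gamma=1$, so the spatial and the temporal estimates should be calibrated separately (as in the cited Remark~4.8 of \cite{Kry2001}), each then holding for all $r\in(0,r_0)$.
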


Combining Theorem~\ref{thm:mainHoelder} with the results of Section~\ref{SPDEs}, we immediately obtain the following result on the H\"older-Sobolev regularity of solutions of SPDEs.

\begin{thm}\label{thm:Hoelder_SPDEs}
Let $\domain$ be a bounded Lipschitz domain in $\bR^d$. Let $2\leq p\leq q<\infty$, $\gamma\in\bN_0$, $\theta\in\bR$ and 
\begin{align*}
2/q<\bar{\beta}<\beta\leq 1.
\end{align*}
\noindent\textup{\textbf{(i)}} Given the setting of Theorem~\ref{thm:heat_LqLp}, the solution $u\in\frH^{\gamma+2,q}_{p,d}(\domain,T)$ 
of Eq.~\eqref{eq:heat} fulfils
\begin{equation*}
\begin{aligned}
\E\|\psi^{\beta-1}u\|^q_{C^{\bar{\beta}/2-1/q}([0,T]; H^{\gamma+2-\beta}_{p,d}(\domain))}
\leq N 
\sgrklam{
\|f\|^q_{\bH^{0,q}_{p,d}(\domain,T)}
+
\|f\|^q_{\bH^{\gamma,q}_{p,d+p}(\domain,T)}
+
\|g\|^q_{\bH^{\gamma+1,q}_{p,d}(\domain,T;\ell_2)}},
\end{aligned}
\end{equation*}
where the constant $N\in (0,\infty)$ does not depend on $u$, $f$ and $g$.

\noindent\textup{\textbf{(ii)}} Given the setting of Theorem~\ref{thm:LpLq}, the solution $u\in\frH^{\gamma+2,q}_{p,d}(\domain,T)$ of Eq.~\eqref{eq:mainEqa2}
fulfils
\begin{equation*}
\begin{aligned}
\E\|\psi^{\beta-1}u\|^q_{C^{\bar{\beta}/2-1/q}([0,T]; H^{\gamma+2-\beta}_{p,d}(\domain))}	 
\leq N \sgrklam{
\|f&\|_{\bH^{\gamma,q}_{p,d+p}(\domain,T)}^q
+
\|f\|_{\bH^{0,q}_{p,d}(\domain,T)}^q	 \\
&+
\|g\|_{\bH^{\gamma+1,q}_{p,d}(\domain,T;\ell_2)}^q
+
\|g\|_{\bH^{1,q}_{p,d-p}(\domain,T;\ell_2)}^q
},
\end{aligned}
\end{equation*}
where the constant $N\in (0,\infty)$ does not depend on $u$, $f$ and $g$.
\end{thm}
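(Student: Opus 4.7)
The plan is to derive this result as an immediate consequence of three pieces already established in the paper: the temporal H\"older estimate of Theorem~\ref{thm:mainHoelder}, the definition of a solution (Definition~\ref{def:solution}), and the a priori estimates from the solvability results of Section~\ref{SPDEs} (namely Theorem~\ref{thm:heat_LqLp} for part (i) and Theorem~\ref{thm:LpLq} for part (ii)). No new technical ideas beyond those already developed are required; the argument is essentially a concatenation.

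First I would apply Theorem~\ref{thm:mainHoelder} with smoothness parameter $\gamma+2\in\bN$ and weight $\theta=d$ to the solution $u\in\frH^{\gamma+2,q}_{p,d}(\domain,T)$ supplied by the appropriate solvability theorem. Since $u(0,\cdot)=0$ in both settings, the initial-value contribution in~\eqref{eq:mH02} drops out, and the estimate reduces to
\begin{equation*}
\E\|\psi^{\beta-1}u\|^q_{C^{\bar\beta/2-1/q}([0,T];H^{\gamma+2-\beta}_{p,d}(\domain))}\leq N T^{(\beta-\bar\beta)q/2}\sgrklam{\|u\|^q_{\bH^{\gamma+2,q}_{p,d-p}(\domain,T)}+\|\bD u\|^q_{\bH^{\gamma,q}_{p,d+p}(\domain,T)}+\|\bS u\|^q_{\bH^{\gamma+1,q}_{p,d}(\domain,T;\ell_2)}}.
\end{equation*}

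Next, using Definition~\ref{def:solution}, I would replace $\bD u = a^{ij}u_{x^ix^j}+f$ (with $a^{ij}=\delta_{ij}$ in part (i)) and $\bS u = g$. Because the coefficients $a^{ij}$ are uniformly bounded by Assumption~\ref{ass:coeff}(ii), and because two applications of Lemma~\ref{lem:collection}(iii)--(iv) yield
\begin{equation*}
\|u_{x^ix^j}\|_{\bH^{\gamma,q}_{p,d+p}(\domain,T)}\leq N\|u\|_{\bH^{\gamma+2,q}_{p,d-p}(\domain,T)}\leq N\|u\|_{\frH^{\gamma+2,q}_{p,d}(\domain,T)},
\end{equation*}
the right-hand side above is dominated by $N\,T^{(\beta-\bar\beta)q/2}$ times $\|u\|^q_{\frH^{\gamma+2,q}_{p,d}(\domain,T)}+\|f\|^q_{\bH^{\gamma,q}_{p,d+p}(\domain,T)}+\|g\|^q_{\bH^{\gamma+1,q}_{p,d}(\domain,T;\ell_2)}$.

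Finally I would close the argument by invoking the a priori estimate~\eqref{eq:heat_ap} (for part (i)) or~\eqref{eq:LpLqEst} (for part (ii)) to bound $\|u\|_{\frH^{\gamma+2,q}_{p,d}(\domain,T)}$ by exactly the norms of $f$ and $g$ appearing on the right-hand side of the stated inequality, absorbing the $T$-dependent prefactor into the constant. The only non-automatic step is parameter bookkeeping: one has to verify that $\gamma+2\in\bN$ (which holds since $\gamma\in\bN_0$), that $2\leq p\leq q<\infty$ is compatible with both Theorem~\ref{thm:mainHoelder} and the underlying solvability theorem, and that the $p$-range in part (i) stays within $[2,p_0)$ while the Lipschitz-constant smallness condition in part (ii) is satisfied. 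I expect this bookkeeping to be entirely routine and not to present any genuine obstacle, since it is already built into the hypotheses.
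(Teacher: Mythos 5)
Your proposal is correct and matches the paper's approach: the paper gives no separate proof, presenting the theorem as an immediate combination of Theorem~\ref{thm:mainHoelder} (applied with smoothness $\gamma+2$, weight $\theta=d$, and vanishing initial data) with the a priori estimates of Theorems~\ref{thm:heat_LqLp} and~\ref{thm:LpLq}. Your additional step of bounding $\|\bD u\|_{\bH^{\gamma,q}_{p,d+p}}$ via the equation and Lemma~\ref{lem:collection}(iii)--(iv) is exactly the bookkeeping the paper leaves implicit.
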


For the case that the summability  parameters in time and space coincide, i.e., $q=p$,  a result similar to Theorem \ref{thm:mainHoelder} has been proven in \cite{Kim2011}, see Theorem 2.9 therein. The proof in \cite{Kim2011} is straightforward and relies on \cite[Corollary 4.12]{Kry2001}, which is a variant of Theorem \ref{thm:mainHoelder} on the whole space $\bR^d$. However, we are explicitly interested in the case $q>p$ since it allows  a wider range of parameters $\bar{\beta}$ and $\beta$, and therefore leads to better regularity results. Unfortunately, the proof technique used in \cite[Proposition 2.9]{Kim2011} does not work any more in this case. Therefore, we take a different path: We use \cite[Proposition 4.1]{Kry2001}, which covers the assertion of Theorem \ref{thm:mainHoelder} with $\bR^d_+$  instead of $\domain$, and the Lipschitz character of $\partial\domain$ to derive Theorem \ref{thm:mainHoelder} via a boundary flattening argument. To this end, we need the following two lemmas whose proofs are postponed to the appendix.

\begin{lemma}\label{lem:Transf_rule}
Let $\gdomain^{(1)}, \gdomain^{(2)}$ be domains in $\bR^d$ with non-empty boundaries, and let $\phi: \gdomain^{(1)} \to \gdomain^{(2)}$ be a bijective map, such that  $\phi$ and $\phi^{-1}$ are Lipschitz continuous. Furthermore, assume that there exists a constant $N\in(0,\infty)$, such that 
\begin{equation*}
\frac{1}{N} \rho_{\gdomain^{(1)}}(\phi^{-1}(y))\leq \rho_{\gdomain^{(2)}}(y)\leq N \rho_{\gdomain^{(1)}}(\phi^{-1}(y)) \text{ for all } y\in\gdomain^{(2)},
\end{equation*}
and that the (a.e.\ existing) Jacobians $J\phi$ and $J\phi^{-1}$ fulfil
\begin{equation*}
|\operatorname{Det} J\phi |=1 \text{ and } |\operatorname{Det} J\phi^{-1} |=1 \quad\text{ (a.e.)}.
\end{equation*}
Then, for any $\gamma \in [-1,1]$, there exists a constant $N=N(d,\gamma,p,\theta,\phi)\in(0,\infty)$, which does not depend on $u$, such that
$$
\frac{1}{N}\|u\|_{H^{\gamma}_{p,\theta}(\gdomain^{(1)})}\leq \|u\circ\phi^{-1}\|_{H^{\gamma}_{p,\theta}(\gdomain^{(2)})}\leq N \|u\|_{H^{\gamma}_{p,\theta}(\gdomain^{(1)})}
$$
in the sense that, if one of the norms exists, so does the other one and the above inequality holds.
\end{lemma}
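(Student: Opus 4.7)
My plan is to first handle the three endpoint cases $\gamma\in\{-1,0,1\}$ and then obtain the full range $\gamma\in[-1,1]$ by complex interpolation for $\gamma\in(0,1)$ and duality for $\gamma\in[-1,0)$. The case $\gamma=0$ reduces, via the identity $L_{p,\theta}(\gdomain)=L_p(\gdomain,\rho^{\theta-d}dx)$ from Section~\ref{WS}, to a change of variables $y=\phi(x)$; the measure-preservation $|\operatorname{Det}J\phi|=1$ combined with the two-sided distance comparison immediately gives
\begin{equation*}
\|u\circ\phi^{-1}\|_{L_{p,\theta}(\gdomain^{(2)})}^{p}
=\int_{\gdomain^{(2)}}|u(\phi^{-1}(y))|^{p}\rho_{\gdomain^{(2)}}(y)^{\theta-d}\,dy
\sim\|u\|_{L_{p,\theta}(\gdomain^{(1)})}^{p}.
\end{equation*}

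For $\gamma=1$, I would invoke the equivalent norm $\|u\|_{H^1_{p,\theta}(\gdomain)}^{p}\sim\|u\|_{L_{p,\theta}(\gdomain)}^{p}+\sum_i\|\rho\,u_{x^i}\|_{L_{p,\theta}(\gdomain)}^{p}$ from Lemma~\ref{lem:collection}. By Rademacher's theorem $\phi^{-1}$ is differentiable almost everywhere with $\|J\phi^{-1}\|_{L^\infty}$ bounded by the Lipschitz constant of $\phi^{-1}$. The distributional chain rule $D(u\circ\phi^{-1})(y)=(Du)(\phi^{-1}(y))\cdot J\phi^{-1}(y)$ is first verified for $u\in C^\infty_0(\gdomain^{(1)})$ and then extended to $H^1_{p,\theta}(\gdomain^{(1)})$ by the density asserted in Lemma~\ref{lem:collection}(i). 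A componentwise application of the $\gamma=0$ estimate, together with the essential boundedness of $J\phi^{-1}$ and the distance comparison, then bounds $\|\rho\,D(u\circ\phi^{-1})\|_{L_{p,\theta}(\gdomain^{(2)})}$ in terms of $\|\rho\,Du\|_{L_{p,\theta}(\gdomain^{(1)})}$.

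For $\gamma\in(0,1)$, the pullback $u\mapsto u\circ\phi^{-1}$ is now bounded at both interpolation endpoints, and the identification of the complex intermediate space as $[H^0_{p,\theta}(\gdomain),H^1_{p,\theta}(\gdomain)]_{\gamma}=H^\gamma_{p,\theta}(\gdomain)$ (see \cite[Proposition~2.4]{Lot2000}) closes this range. For $\gamma\in[-1,0)$ I would pass to duality: the dual of $H^\gamma_{p,\theta}(\gdomain)$ can be identified with a weighted Bessel potential space $H^{-\gamma}_{p',\theta^{*}}(\gdomain)$ of the same form for the conjugate exponent $p'=p/(p-1)$ and a matching conjugate weight $\theta^{*}$, and because $|\operatorname{Det}J\phi|=1$ the adjoint of pullback under $\phi^{-1}$ is precisely pullback under $\phi$. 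Since the hypotheses of the lemma are symmetric in $\phi$ and $\phi^{-1}$, the already-established positive-$\gamma$ case applied to $\phi$ delivers the required negative-$\gamma$ bound for $\phi^{-1}$, and both directions of the claimed norm equivalence follow simultaneously.

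The most delicate step will be the duality argument: one has to pin down the correct conjugate weight $\theta^{*}$ for which $H^{-\gamma}_{p',\theta^{*}}(\gdomain)$ is the topological dual of $H^\gamma_{p,\theta}(\gdomain)$ under the standard bilinear pairing, and verify that the adjoint of the pullback is literally the pullback by the inverse modulo the (trivial) Jacobian factor. The restriction $|\gamma|\leq 1$ is in fact the natural range for this strategy, since beyond it one loses either the distributional chain-rule argument (which would demand more than Lipschitz regularity of $\phi^{-1}$) or the matching regularity on the dual side. The chain-rule step itself is the only other care-point and is dispatched via the density of $C^\infty_0(\gdomain^{(1)})$ in $H^1_{p,\theta}(\gdomain^{(1)})$.
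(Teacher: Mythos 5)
Your proposal is correct and follows essentially the same route as the paper: change of variables for $\gamma=0$, a.e.\ chain rule plus density of $C^\infty_0$ for $\gamma=1$, duality with the conjugate weight $\theta/p+\theta'/p'=d$ for the negative range, and complex interpolation via \cite[Proposition~2.4]{Lot2000} for fractional orders. The only (immaterial) difference is that the paper applies duality just at the endpoint $\gamma=-1$ and interpolates to cover $(-1,0)$, whereas you propose duality on all of $[-1,0)$; both work given the duality statement in \cite{Lot2000}.
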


\begin{lemma}\label{pullbackEqn}
Let $\gdomain^{(1)}, \gdomain^{(2)}$ be bounded domains in $\bR^d$ and let 
$\phi: \gdomain^{(1)}\to\gdomain^{(2)}$ satisfy the assumptions of Lemma \ref{lem:Transf_rule}.
Furthermore, let $u\in\frH^{1,q}_{p,\theta}(\gdomain^{(1)},T)$ with $2\leq p\leq q <\infty$. 
Then $u\circ \phi^{-1} \in\frH^{1,q}_{p,\theta}(\gdomain^{(2)},T)$ with deterministic part $\,\bD(u\circ\phi^{-1}) = \bD u \circ \phi^{-1}$ and stochastic part $\,\bS (u\circ \phi^{-1})=\bS u \circ \phi^{-1}$. 
In particular, for any $\varphi\in
C^{\infty}_{0}(\gdomain^{(2)})$, with probability one, the equality
\begin{equation}
\big(u(t,\cdot)\circ\phi^{-1},\varphi\big)=
\big(u(0,\cdot)\circ\phi^{-1},\varphi\big) + \int^{t}_{0}\big((\bD u)(s,\cdot)\circ\phi^{-1},\varphi\big) \, ds
+\sum^{\infty}_{k=1} \int^{t}_{0}\big((\bS^ku)(s,\cdot)\circ\phi^{-1},\varphi\big)\, dw^k_s
\end{equation}
holds for all $t\in[0,T]$. 
\end{lemma}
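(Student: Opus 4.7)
The plan has two parts: (i) show that $u\circ\phi^{-1}$, $\bD u\circ\phi^{-1}$, $\bS u\circ\phi^{-1}$ and $u(0,\cdot)\circ\phi^{-1}$ lie in the correct spaces on $\gdomain^{(2)}$; (ii) verify that the distributional identity holds with these as the new $\bD$ and $\bS$ terms. Part (i) is essentially bookkeeping: Lemma~\ref{lem:Transf_rule} applied pointwise in $(\omega,t)$ with $\gamma\in\{-1,0,1\}$ gives the pointwise estimates
\begin{equation*}
\|u\circ\phi^{-1}\|_{H^{1}_{p,\theta-p}(\gdomain^{(2)})}\le N\|u\|_{H^{1}_{p,\theta-p}(\gdomain^{(1)})},\qquad \|\bD u\circ\phi^{-1}\|_{H^{-1}_{p,\theta+p}(\gdomain^{(2)})}\le N\|\bD u\|_{H^{-1}_{p,\theta+p}(\gdomain^{(1)})},
\end{equation*}
and the analogous bound for $\bS u\circ\phi^{-1}$ in $H^{0}_{p,\theta}(\gdomain^{(2)};\ell_2)$ after running the proof of Lemma~\ref{lem:Transf_rule} componentwise. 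Raising to the $q$-th power and integrating against $\wP\otimes dt$ yields membership in the respective $\bH$-spaces. For the initial value I would combine $\psi_{\gdomain^{(2)}}(y)\sim \psi_{\gdomain^{(1)}}(\phi^{-1}(y))$ (a consequence of the distance comparison) with Lemma~\ref{lem:collection}(iv) to strip off the $\psi^{1-2/q}$ factor, then apply Lemma~\ref{lem:Transf_rule} with $\gamma=1-2/q\in[-1,1]$.

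For part (ii), fix $\varphi\in C^{\infty}_{0}(\gdomain^{(2)})$ and set $\tilde\varphi:=\varphi\circ\phi$. Since $\phi^{-1}$ is continuous, $\operatorname{supp}\tilde\varphi=\phi^{-1}(\operatorname{supp}\varphi)$ is a compact subset of $\gdomain^{(1)}$; since $\phi$ is Lipschitz, $\tilde\varphi$ belongs to $W^{1}_{r}(\gdomain^{(1)})$ for every $r\in[1,\infty]$. I would mollify $\tilde\varphi$ to obtain $\tilde\varphi_{\varepsilon}\in C^\infty_0(\gdomain^{(1)})$ with supports contained in a fixed compact $K\Subset \gdomain^{(1)}$ for all sufficiently small $\varepsilon$, such that $\tilde\varphi_{\varepsilon}\to\tilde\varphi$ in $W^{1}_{p'}(\gdomain^{(1)})$, $p'=p/(p-1)$. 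Since $u\in\frH^{1,q}_{p,\theta}(\gdomain^{(1)},T)$, the defining identity of Definition~\ref{def:StWS} gives, with probability one,
\begin{equation*}
(u(t,\cdot),\tilde\varphi_{\varepsilon})=(u(0,\cdot),\tilde\varphi_{\varepsilon})+\int_0^t (\bD u(s,\cdot),\tilde\varphi_{\varepsilon})\,ds+\sum_{k=1}^\infty\int_0^t(\bS^k u(s,\cdot),\tilde\varphi_{\varepsilon})\,dw^k_s
\end{equation*}
for all $t\in[0,T]$. I would then pass to the limit $\varepsilon\to 0$ termwise: the deterministic pairings converge by the duality between $H^{1}_{p,\theta-p}(\gdomain^{(1)})$ (respectively $H^{-1}_{p,\theta+p}(\gdomain^{(1)})$) and compactly supported $W^{1}_{p'}$-functions, integrated in $s$; the series of stochastic integrals converges in $L_2(\Omega)$ by It\^o's isometry, using $\bS u\in\bH^{0,q}_{p,\theta}(\gdomain^{(1)},T;\ell_2)$ and the $L_{p'}$-convergence of $\tilde\varphi_{\varepsilon}$ to $\tilde\varphi$, after which an a.s.-convergent subsequence is selected. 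A change of variables based on $|\det J\phi|=1$ converts each pairing on $\gdomain^{(1)}$ with $\tilde\varphi$ into the corresponding pairing on $\gdomain^{(2)}$ with $\varphi$, producing the stated identity and, by the uniqueness of $\bD$ and $\bS$ in Definition~\ref{def:StWS}, identifying them as $\bD u\circ\phi^{-1}$ and $\bS u\circ\phi^{-1}$.

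The main obstacle is the mollification step: the pullback of a smooth compactly supported test function through a merely Lipschitz change of variables is only Lipschitz, so the distributional equation has to be extended from $C^{\infty}_{0}(\gdomain^{(1)})$-test functions to compactly supported Lipschitz test functions before the change of variables can be invoked. The choice of regularity $\gamma=1$ together with $2\le p\le q<\infty$ is exactly what makes $W^{1}_{p'}$-convergence of the mollifications strong enough to control all three terms uniformly, and after this the assertion is a combination of Lemma~\ref{lem:Transf_rule} and standard approximation.
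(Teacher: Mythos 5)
Your overall strategy coincides with the paper's: part (i) is the same bookkeeping via Lemma \ref{lem:Transf_rule}, and for part (ii) both arguments approximate the pulled-back test function $\varphi\circ\phi$ by genuine elements of $C^\infty_0(\gdomain^{(1)})$, pass to the limit in the defining identity of Definition \ref{def:StWS}, and then convert the pairings on $\gdomain^{(1)}$ into pairings on $\gdomain^{(2)}$ using $|\operatorname{Det}J\phi|=1$. The only real difference of route is the topology for the approximation: you mollify and use $W^1_{p'}$-convergence on a fixed compact set, whereas the paper invokes density of $C^\infty_0(\gdomain^{(1)})$ in a single weighted space $H^1_{\bar p,\bar\theta-\bar p}(\gdomain^{(1)})$ (with $\bar p=2p/(p-2)$ when $p>2$) chosen so that it embeds continuously into all three dual spaces $H^1_{p',\theta'-p'}(\gdomain^{(1)})$, $L_{p',\theta'}(\gdomain^{(1)})$ and $L_{\bar p,\bar\theta}(\gdomain^{(1)})$ that are needed. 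Since all the weights are bounded above and below on a fixed compact subset of $\gdomain^{(1)}$, your mollification controls the same pairings, so this variant is acceptable and arguably more elementary.

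There is, however, one step that is handled too loosely to count as a proof. You pass to the limit termwise ``in $L_2(\Omega)$'' and then ``select an a.s.-convergent subsequence''; this yields the identity only for each fixed $t$ outside a $t$-dependent null set, whereas the assertion is that a single null set works for all $t\in[0,T]$ simultaneously. To repair this one must upgrade to convergence in $L_2(\Omega;C([0,T]))$ of each side, which is exactly what the paper does: Doob's maximal inequality combined with It\^o's isometry for the stochastic term, the trivial sup-bound for the Lebesgue-integral term, and --- crucially --- the maximal estimate
\begin{equation*}
\E\sgeklam{\sup_{t\in[0,T]}\nnrm{u(t,\cdot)}{L_{p,\theta}(\gdomain^{(1)})}^p}\leq N\,\nnrm{u}{\frH^{1,p}_{p,\theta}(\gdomain^{(1)},T)}^p
\end{equation*}
from \cite[Proposition 2.9]{Kim2011} (applicable because $q\geq p$) for the left-hand side $\big(u(t,\cdot),\tilde\varphi_{\varepsilon}-\tilde\varphi\big)$. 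Your sketch does not address the uniform-in-$t$ control of this last term at all; without it you cannot identify the limit of the left-hand side as $\big(u(t,\cdot),\varphi\circ\phi\big)$ for \emph{every} $t$, since $\varphi\circ\phi$ is not an admissible test function in Definition \ref{def:StWS} and the continuity in $t$ of that pairing is not yet known. With this maximal inequality added, your argument closes.
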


Now we are able to prove our main result in this section.

\begin{proof}[Proof of Theorem \ref{thm:mainHoelder}]
Let us simplify notation and write $f:=\bD u$ and $g:=\bS u$ throughout the proof. We will show that \eqref{eq:mH01} is true by induction over $\gamma\in\bN$; estimate \eqref{eq:mH02} can be proved analogously. 

We start with the case $\gamma=1$. Fix $x_0\in\partial\domain$ and choose $r>0$ small enough, e.g., $r:=r_0(10 K_0)^{-1}$ with $r_0$ and $K_0>1$ from Definition \ref{domain}. Let us assume for a moment that the supports (in the sense of distributions) of $u$, $f$ and $g$ are contained in $B_r(x_0)$ for each $t$ and $\omega$. With $\mu_0$ from Definition~\ref{domain}, we introduce the function
\begin{align*}
\phi:\; \gdomain^{(1)}:=\domain \cap B_{r_0}(x_0) &\;\longrightarrow\; \gdomain^{(2)}:=\phi(\domain \cap B_{r_0}(x_0))\,\subseteq\,\bR^d_+\\
x=(x^1,x')&\;\longmapsto\;(x^1-\mu_0(x'),x'),
\end{align*}
which fulfils all the assumptions of Lemma \ref{lem:Transf_rule}.
Note that, since $r$ has been chosen sufficiently small, one has $\rho_\domain(x)=\rho_{\gdomain^{(1)}}(x)$ for all $x\in \domain\cap B_r(x_0)$, so that one can easily show that the equivalence
\begin{equation*}
\|v\|_{H^{\nu}_{\bar p,\bar\theta}(\domain)}
\sim 
\|v\|_{H^{\nu}_{\bar p,\bar\theta}(\gdomain^{(1)})},\qquad v\in\cD'(\domain),\;\supp v\subseteq B_r(x_0),
\end{equation*}
holds for all $\nu$, $\bar{\theta}\in\bR$ and $\bar{p}>1$.
Together with Lemma \ref{lem:Transf_rule} we obtain for any $\nu\in [-1,1]$,
\begin{equation*} 
\|v\|_{H^{\nu}_{\bar p,\bar\theta}(\domain)}
\sim 
\|v\circ\phi^{-1}\|_{H^{\nu}_{\bar p,\bar\theta}(\gdomain^{(2)})},
\qquad v\in\cD'(\domain),\;\supp v\subseteq B_r(x_0).
\end{equation*}
Thus, denoting $\bar{u}:=u\circ\phi^{-1}$, $\bar{f}:= f\circ\phi^{-1}$ and $\bar{g}:=g\circ\phi^{-1}$, by Lemma \ref{pullbackEqn} we know that on $G^{(2)}$ we have $d\bar{u}=\bar{f}dt+\bar{g}^kdw^k_t$  in the sense of distributions. Furthermore, since $\rho_{\gdomain^{(2)}}(y)=\rho_{\bR^d_+}(y)$ for all $y\in\phi( \domain \cap B_r(x_0))$, the equivalence
\begin{equation*}
\|v\circ\phi^{-1}\|_{H^{\nu}_{\bar p,\bar\theta}(\gdomain^{(2)})}
\sim 
\|v\circ\phi^{-1}\|_{H^{\nu}_{\bar p,\bar\theta}(\bR^d_+)},\qquad v\in\cD'(\domain),\;\supp v\subseteq B_r(x_0),
\end{equation*}
holds for any $\nu\in [-1,1]$,
where we identify $v\circ\phi^{-1}$ with its extension to $\bR^d_+$ by zero. Therefore, by making slight abuse of notation and writing $\bar{u}$, $\bar{f}$ and $\bar{g}$ for the extension by zero on $\bR^d_+$ of $\bar{u}$, $\bar{f}$ and $\bar{g}$ respectively, we have
\begin{equation*}
\bar u\in \bH^{1,q}_{p,\theta-p}(\bR^d_+,T),
\quad 
\bar u(0)\in U^{1,q}_{p,\theta}(\bR_+^d), 
\quad 
\bar{f}\in \bH^{-1,q}_{p,\theta+p}(\bR^d_+,T), 
\quad 
\bar{g}\in \bH^{0,q}_{p,\theta}(\bR^d_+,T;\ell_2),
\end{equation*}
and $d\bar{u}=\bar{f}dt+\bar{g}^kdw^k_t$ is fulfilled on $\bR^d_+$ in the sense of distributions. Thus, we can apply \cite[Theorem~4.1]{Kry2001} and use the equivalences above to get estimate \eqref{eq:mH01} in the following way:
\begin{align*}
\E [u]^q_{C^{\bar{\beta}/2-1/q}([0,T];H^{1-\beta}_{p,\theta+p(\beta-1)}(\domain))}
&
\leq N\,
\E [\bar u]^q_{C^{\bar{\beta}/2-1/q}([0,T];H^{1-\beta}_{p,\theta+p(\beta-1)}(\bR^d_+))}\\
&\leq N\, T^{(\beta-\bar{\beta})q/2}
\sgrklam{
\|\bar u\|^q_{\bH^{1,q}_{p,\theta-p}(\bR^d_+,T)}
+ 
\|\bar{f}\|^q_{\bH^{-1,q}_{p,\theta+p}(\bR^d_+,T)}
+
\|\bar{g}\|^q_{\bH^{0,q}_{p,\theta}(\bR^d_+,T;\ell_2)}
}\\
&\leq N\, T^{(\beta-\bar{\beta})q/2}
\sgrklam{
\|u\|^q_{\bH^{1,q}_{p,\theta-p}(\domain,T)}
+
\|f\|^q_{\bH^{-1,q}_{p,\theta+p}(\domain,T)}
+
\|g\|^q_{\bH^{0,q}_{p,\theta}(\domain,T;\ell_2)}
}.
\end{align*}
Now let us give up the assumption on the supports of $u$, $f$ and $g$. 
Let $\xi_0,\xi_1,\ldots, \xi_m$, be a partition of unity of $\domain$, such that $\xi_0\in C^{\infty}_0(\domain)$, and, for $i=1,\ldots,m$, $\xi_i\in  C^\infty_0(B_{r}(x_i))$ with $x_i \in \partial \domain$. Obviously, $d(\xi_i u)=\xi_i fdt+\xi_i g^k_t dw^k_t$ for $i=0,\ldots,m$.
Since
\begin{align*}
\E[\psi^{\beta-1}u]^q_{C^{\bar{\beta}/2-1/q}([0,T];H^{1-\beta}_{p,\theta}(\domain))}
&\leq N(m,q)
\sum_{i=0}^m \E [\psi^{\beta-1}(\xi_i u)]^q_{C^{\bar{\beta}/2-1/q}([0,T];H^{1-\beta}_{p,\theta}(\domain))},
\end{align*}
we just have to estimate 
$\E [\psi^{\beta-1} \xi_i u ]^q_{C^{\bar{\beta}/2-1/q}([0,T];H^{1-\beta}_{p,\theta}(\domain))}$ for each $i\in\{ 0,\ldots,m\}$. For $i\geq 1$ one gets the required estimate as before, using the fact that $C^\infty_0(\domain)$-functions are pointwise multipliers in all spaces $H^{\nu}_{\bar p,\bar\theta}(\domain)$, $\nu,\bar\theta\in\bR$, $\bar p>1$, see, e.g., \cite[Theorem 3.1]{Lot2000}.
The case $i=0$ can be treated as follows: Since $\xi_0$ has compact support in $\domain$,  for all $\nu,\bar\theta\in\bR$ and $\bar p>1$, we have
\begin{equation}\label{eq:xi0Equi}
\|v\xi_0\|_{H^{\nu}_{\bar p,\bar\theta}(\domain)} 
\sim 
\|v\xi_0\|_{H^{\nu}_{\bar p}(\bR^d)},\qquad v\in\cD'(\domain),
\end{equation}
and consequently 
\begin{equation*}
\E [\psi^{\beta-1}\xi_0 u]^q_{C^{\bar{\beta}/2-1/q}([0,T];H^{1-\beta}_{p,\theta}(\domain))}
\sim 
\E [\xi_0 u]^q_{C^{\bar{\beta}/2-1/q}([0,T]; H^{1-\beta}_{p}(\bR^d))}.
\end{equation*}
By \cite[Theorem 4.11]{Kry2001},  a further application of \eqref{eq:xi0Equi} and the fact that $C^\infty_0(\domain)$-functions are pointwise multipliers in all spaces $H^{\nu}_{\bar p,\bar\theta}(\domain)$, we obtain
\begin{align*}
\E [\xi_0 u&]^q_{C^{\bar{\beta}/2-1/q}([0,T];H^{1-\beta}_{p}(\bR^d))}\\
&\leq N T^{(\beta-\bar{\beta})q/2}
\sgrklam{
\|\xi_0 u\|^q_{\bH^{1,q}_p(\bR^d,T)}
+
\|\xi_0 f\|^q_{\bH^{-1,q}_p(\bR^d,T)}
+
\|\xi_0 g\|^q_{\bH^{0,q}_p(\bR^d,T;\ell_2)}
}\\
&\leq N T^{(\beta-\bar{\beta})q/2}
\sgrklam{
\|\psi^{-1} \xi_0 u\|^q_{\bH^{1,q}_{p,\theta}(\domain,T)}
+
\|\psi \xi_0 f\|^q_{\bH^{ -1,q}_{p,\theta}(\domain,T)}
+
\|\xi_0 g\|^q_{\bH^{0,q}_{p,\theta}(\domain,T;\ell_2)}
}\\
&\leq N T^{(\beta-\bar{\beta})q/2}
\sgrklam{
\|\psi^{-1} u\|^q_{\bH^{1,q}_{p,\theta}(\domain,T)}
+
\|\psi f\|^q_{\bH^{-1,q}_{p,\theta}(\domain,T)}
+
\|g\|^q_{\bH^{0,q}_{p,\theta}(\domain,T;\ell_2)}}.
\end{align*}
This finishes the proof of estimate \eqref{eq:mH01} for the case $\gamma=1$.

Next, let us move to the inductive step and assume that the assertion is true for some $\gamma=n\in\bN$. Fix $u\in\frH^{n+1,q}_{p,\theta}(\domain,T)$.
Then $v:=\psi u_x\in \frH^{n,q}_{p,\theta}(\domain,T)$ and $dv=\psi f_x dt +\psi g^k_x dw^k_t$ (component-wise). Also,  by Lemma \ref{lem:collection} (iii) and (iv),
\begin{align*}
\E [\psi^{\beta-1}u]^q_{C^{\bar{\beta}/2-1/q}([0,T];H^{n+1-\beta}_{p,\theta}(\domain))}
\leq  
N \sgrklam{ \E [\psi^{\beta-1}u&]^q_{C^{\bar{\beta}/2-1/q}([0,T];H^{n-\beta}_{p,\theta}(\domain))}
+	\\
 &\E [\psi^{\beta-1}v]^q_{C^{\bar{\beta}/2-1/q}([0,T];H^{n-\beta}_{p,\theta}(\domain))}}.
\end{align*}
Using the induction hypothesis and applying Lemma~\ref{lem:collection}(iii) and (iv) once more, we see that the induction goes through.
\end{proof}

\setcounter{equation}{0}
\section{Besov spaces and their relationship to weighted Sobolev spaces}
\label{Bes-WS}

We turn our attention to the scale of Besov spaces
\begin{equation}\tag{\ref{NAscale}}
B^{\alpha}_{\tau,\tau}(\domain),\quad\frac 1\tau=\frac\alpha d+\frac1 p,\quad\alpha>0,
\end{equation}
where $p\geq2$ is fixed and $\domain\subseteq\bR^d$ is a bounded Lipschitz domain. As pointed out in the introduction, our motivation for considering this scale is its close connection to nonlinear approximation theory. The main result of this section, Theorem \ref{thm:WSBes-ptau}, is a general embedding of the weighted Sobolev spaces
\begin{equation}\label{Hgammapd-nu}
H^\gamma_{p,d-\nu p}(\domain), \quad\gamma,\nu>0,
\end{equation}
into Besov spaces of the scale \eqref{NAscale}. 
In the previous sections we have seen that the stochastic parabolic weighted Sobolev spaces $\frH^{\gamma,q}_{p,\theta}(\domain,T)$ are suitable for the analysis of SPDEs of the type \eqref{eq:mainEqa}. 
The elements of these spaces are stochastic processes with values in the spaces $H^\gamma_{p,\theta-p}(\domain)=H^\gamma_{p,d-\nu p}(\domain)$ with $\nu=1+(d-\theta)/p$.
Thus, by proving the embedding mentioned above, we show that the regularity analysis for SPDEs of  type \eqref{eq:mainEqa} in terms of the scale \eqref{NAscale} can be traced back to the analysis of such equations in terms of the spaces $\frH^{\gamma,q}_{p,\theta}(\domain,T)$.

The outline of this section is as follows.
In Subsection \ref{Besov} we give a definition of Besov spaces and describe their characterization in terms of wavelets. Based on this characterization and some auxiliary results, we investigate the relation of the scales \eqref{Hgammapd-nu} and \eqref{NAscale} in Subsection \ref{WSBes}. Here we will also see that, for the relevant range of parameters $\gamma$ and $\nu$, the spaces $H^\gamma_{p,d-\nu p}(\domain)$ act like Besov spaces $B^{\gamma\wedge\nu}_{p,p}(\domain)$ with zero trace on the boundary (Remark \ref{rem:Dirichlet}).

\subsection{Besov spaces: Definition and wavelet decomposition.}\label{Besov}

Our standard reference concerning Besov spaces and wavelets is the monograph \cite{Coh2003}. Throughout this subsection, let $\gdomain\subseteq \bR^d$ be an arbitrary domain.

For a function $f:\gdomain \to \bR$ and a natural number $n\in\bN$ let
\begin{equation*}
\Delta_h^n f (x) :=
\prod_{i=0}^n\mathds{1}_\gdomain(x+ih ) \cdot \sum_{j=0}^n \genfrac(){0pt}{0}{n}{j}  (-1)^{n-j}\, f(x+jh)
\end{equation*}
be the $n$-th difference of $f$ with step $h\in\bR^d$.
For $p\in\left(0,\infty\right)$, the $n$-th order $L_p$-modulus of smoothness of $f$ is given by
\begin{equation*}
\omega^n (t,f,G)_p :=
\omega^n (t,f)_p := \sup_{|h|<t} \, \| \,
\Delta_h^n f \,  \|_{L_p (\gdomain)}\, , \qquad t>0 \, .
\end{equation*}
One definition of Besov spaces that fits in our purpose is the following:
\begin{defn}\label{def:Besov}
Let $s, p, q \in\left(0,\infty\right)$ and $n\in\bN$ with $n>s$.
Then $B^s_{p,q}(\gdomain)$ is the collection of all functions  $f \in L_p (\gdomain) $ such that
\begin{equation*}
| \, f \, |_{B^s_{p,q}(\gdomain)} := \sgrklam{ \int_0^\infty \Big[
t^{-s} \, \omega^n (t,f)_p\Big]^q \frac{dt}{t}}^{1/q} <\infty.
\end{equation*}
These classes are equipped with a  (quasi-)norm  by taking
\begin{equation*}
\| \, f \, \|_{B^s_{p,q}(\gdomain)}
:= \| \, f\, \|_{L_p(\gdomain)} +  | \, f \, |_{B^s_{p,q}(\gdomain)}\, .
\end{equation*}
\end{defn}

\begin{remark}
For a more general definition of Besov spaces, including the cases where $p,q=\infty$ and $s<0$ see, e.g., \cite{Tri2006}.
\end{remark}

We want to describe $B^s_{p,q}(\bR^d)$ by  means of wavelet expansions.  To this end let $\varphi$ be a scaling function of tensor product type on $\bR^d$ and let $\psi_i$, $i=1, \ldots, 2^d-1$, be corresponding multivariate  mother  wavelets such that, for a given $r\in\bN$ and some $\domainc>0$, the following locality, smoothness and vanishing moment conditions hold: for all $i=1, \ldots, 2^d-1$,
\begin{align}
&\supp \varphi,\,\supp\psi_i\subset [-\domainc,\domainc]^d,\label{wl1}\\
&\varphi,\,\psi_i \in C^r(\bR^d),\label{wl2}\\
&\int x^\alpha \, \psi_i (x)\, dx=0 \quad\text{ for all $\alpha\in\bN^d$ with $|\alpha|\le r$}.\label{wl3}
\end{align}
We assume that
\begin{align*}
 \big\{\varphi_k,\psi_{i,j,k}\,:\, (\ijk)\in\{1,\cdots,2^d-1\}\times\bN_0\times\bZ^d\big\}
\end{align*}
is a Riesz basis of $L_2(\bR^d)$,  where we used  the abbreviations for dyadic shifts and dilations of the scaling function and the corresponding wavelets
 \begin{align}
\varphi_k(x)    &:=\varphi(x - k),\;x\in\bR^d ,&&\text{for $k\in\bZ^d$, and} \label{wl4}\\
\psi_{i,j,k}(x) &:=2^{jd/2}\psi_i(2^jx-k),\;x\in\bR^d, &&\text{for $(\ijk)\in\{1,\cdots,2^d-1\}\times\bN\times\bZ^d$.}\label{wl5}
\end{align}
Further, we assume that there exists a dual Riesz basis satisfying the same requirements. More precisely, there exist functions
$\widetilde{\varphi} $ and
$\widetilde{\psi}_i$, $ i=1, \ldots , 2^d-1$,
 such that conditions \eqref{wl1}, \eqref{wl2} and \eqref{wl3} hold if $\varphi$ and $\psi$ are replaced by $\widetilde{\varphi} $ and
$\widetilde{\psi}_i$, and such that the biorthogonality relations
\[
\langle \widetilde{\varphi}_k, \psi_{i,j,k} \rangle = \langle \widetilde{\psi}_{i,j,k},
 \varphi_k \rangle  = 0\, , \quad
\langle \widetilde{\varphi}_k, \varphi_{l} \rangle  = \delta_{k,l}, \quad
\langle \widetilde{\psi}_{i,j,k}, \psi_{u,v,l} \rangle  = \delta_{i,u}\, \delta_{j,v}\, \delta_{k,l}\, ,
\]
are fulfilled.
Here we use  analogous  abbreviations to \eqref{wl4} and \eqref{wl5} for the dyadic shifts and dilations of $\widetilde{\varphi} $ and
$\widetilde{\psi}_i$ ,  and $\delta_{k,l}$ denotes the Kronecker symbol. We refer to \cite[Chapter 2]{Coh2003} for the construction of biorthogonal wavelet bases, see also \cite{Dau1992} and \cite{CohDauFea1992}. To  keep notation simple,  we will write
\begin{equation*}
 \psi_{i,j,k,p} := 2^{jd(1/p-1/2)}\psi_\ijk \qquad \text{ and } \qquad \widetilde{\psi}_{i,j,k,p'}:= 2^{jd(1/{p'}-1/2)}\widetilde{\psi}_\ijk,
\end{equation*}
for the $L_p$-normalized wavelets and the correspondingly modified duals, with $p':=p/(p-1)$ if $p\in(0,\infty),\;p\neq 1,$ and  $p':=\infty,\;1/p':=0$ if $p=1$.

The following theorem shows how Besov spaces can be described by decay properties of the wavelet coefficients, if the parameters fulfil certain conditions.

\begin{thm}\label{thm:BesovChar01}
Let $p,q\in\left(0,\infty\right)$ and $s>\max\left\{0,d\left(1/p-1\right)\right\}$. Choose $r\in\bN$ such that $r>s$ and construct a  biorthogonal  wavelet Riesz basis as described above.
Then a locally integrable function $f:\bR^d\to\bR$ is in the Besov space $B^s_{p,q}(\bR^d)$ if, and only if,
\begin{equation}\label{BesovZerlegung}
f=\sum_{k\in\bZ^d}\langle f,\widetilde{\varphi}_k\rangle\,\varphi_k
    + \sum_{i=1}^{2^{d}-1}\sum_{j\in\bN}\sum_{k\in\bZ^d}\langle f,\widetilde{\psi}_{\ijk,p'}\rangle\,\psi_{\ijk,p}
\end{equation}
(convergence in $\mathcal D'(\bR^d)$) with
\begin{equation}\label{BesovNormDiscrete}
\Big(\sum_{k\in\bZ^d}|\langle f,\widetilde{\varphi}_k\rangle|^p\Big)^{1/p}
    + \Big(
    \sum_{i=1}^{2^{d}-1}\sum_{j\in\bN_0}2^{j s q}
    \Big(\sum_{k\in\bZ^d}|\langle f,\widetilde{\psi}_{\ijk,p'}\rangle|^p\Big)^{q/p}\Big)^{1/q}
    <   \infty,
\end{equation}
and \eqref{BesovNormDiscrete}  is an equivalent (quasi-)norm  for $B^s_{p,q}(\bR^d)$.
\end{thm}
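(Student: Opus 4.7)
The approach is the classical Jackson--Bernstein programme of approximation theory, adapted to the multiresolution structure induced by the biorthogonal wavelet Riesz basis. Setting $\varphi_{j,k}(x):=2^{jd/2}\varphi(2^jx-k)$ and $V_j:=\overline{\operatorname{span}}\{\varphi_{j,k}:k\in\bZ^d\}$, I would introduce the biorthogonal projectors $P_jf:=\sum_{k\in\bZ^d}\langle f,\widetilde\varphi_{j,k}\rangle\,\varphi_{j,k}$ onto $V_j$ and their differences $Q_jf:=P_{j+1}f-P_jf$, which by biorthogonality coincide with $\sum_{i,k}\langle f,\widetilde\psi_{i,j,k}\rangle\,\psi_{i,j,k}$. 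The telescoping identity $f=P_0f+\sum_{j\geq 0}Q_jf$, interpreted in $\mathcal D'(\bR^d)$, is then precisely the decomposition \eqref{BesovZerlegung}; the restriction $s>d(1/p-1)_+$ combined with the standard embedding $B^s_{p,q}(\bR^d)\hookrightarrow L_1^{\mathrm{loc}}(\bR^d)$ ensures that the dual pairings $\langle f,\widetilde\varphi_k\rangle$ and $\langle f,\widetilde\psi_{i,j,k,p'}\rangle$ are well defined and that the series converges distributionally.

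The heart of the proof is to establish two equivalences. The first is the $L_p$-stability of the wavelet system for all $0<p<\infty$: with the $L_p$-normalisations chosen in the statement,
\[
\|P_0f\|_{L_p(\bR^d)}\sim\Big(\sum_{k\in\bZ^d}|\langle f,\widetilde\varphi_k\rangle|^p\Big)^{1/p},\qquad \|Q_jf\|_{L_p(\bR^d)}\sim\Big(\sum_{i,k}|\langle f,\widetilde\psi_{i,j,k,p'}\rangle|^p\Big)^{1/p}.
\]
For $p\geq 1$ this follows from the Riesz basis property via duality together with the local, compactly supported nature of the atoms; for $p<1$ it is derived from the atomic decomposition of $L_p$, for which the compact support \eqref{wl1}, smoothness \eqref{wl2} and vanishing moments \eqref{wl3} of both $\varphi,\psi_i$ and their duals constitute exactly the required atom conditions. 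The second equivalence is the Jackson--Bernstein identity
\[
|f|_{B^s_{p,q}(\bR^d)}^q\sim\sum_{j\geq 0}\bigl(2^{js}\,\|Q_jf\|_{L_p(\bR^d)}\bigr)^q,
\]
obtained by combining the Jackson inequality $\|f-P_jf\|_{L_p}\lesssim\omega^r(2^{-j},f)_p$ (a consequence of the fact that $P_j$ reproduces polynomials of degree strictly less than $r$, guaranteed by the vanishing moments of $\widetilde\psi_i$), the Bernstein inequality $\|g\|_{B^s_{p,\infty}}\lesssim 2^{js}\|g\|_{L_p}$ for $g\in V_{j+1}$ (which reduces via \eqref{wl2} to standard bounds on derivatives of $V_{j+1}$-functions), and the well-known discretisation of the Besov seminorm as an $\ell_q$-weighted sum over dyadic values of $\omega^r(\cdot,f)_p$.

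Chaining the two equivalences yields the characterisation \eqref{BesovNormDiscrete}. The main technical obstacle lies in the quasi-Banach regime $0<p<1$: the $p$-triangle inequality must be used throughout in place of the triangle inequality, the duality-based $L_p$-stability argument breaks down, and one has to rely on almost-orthogonality and atomic-type estimates that exploit the decay and regularity of \emph{both} the primal and dual wavelets. A secondary subtlety, already anticipated above, is the distributional convergence of \eqref{BesovZerlegung} when $f\notin L_p$; this is precisely where the threshold $s>d(1/p-1)_+$, which forces $B^s_{p,q}(\bR^d)\hookrightarrow L_1^{\mathrm{loc}}(\bR^d)$, becomes indispensable. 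Complete details in this generality are found in \cite[Chapter 3]{Coh2003}.
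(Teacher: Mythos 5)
The paper does not actually prove this theorem: the remark immediately following it refers the reader to \cite{Mey1992} for $p\geq 1$ and to \cite{Kyr1996} or \cite[Theorem 3.7.7]{Coh2003} for the general case, so the statement is quoted from the literature rather than established in the text. Your Jackson--Bernstein/multiresolution outline is precisely the standard argument carried out in those references (in particular \cite[Chapter 3]{Coh2003}), and it correctly identifies the two genuine difficulties --- the quasi-Banach regime $0<p<1$, where the projectors $P_j$ are no longer bounded on $L_p$ and duality must be replaced by local polynomial approximation and $p$-triangle-inequality arguments, and the role of the threshold $s>\max\{0,d(1/p-1)\}$ in making the dual pairings and the distributional convergence of \eqref{BesovZerlegung} meaningful --- so it is consistent with the proof the paper implicitly relies on.
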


\begin{remark}
A proof of this theorem for the case $p\geq1$ can be found in  \cite[\textsection10 of Chapter 6]{Mey1992}. For the general case see for example \cite{Kyr1996} or \cite[Theorem 3.7.7]{Coh2003}. Of course, if \eqref{BesovNormDiscrete} holds then the infinite sum in \eqref{BesovZerlegung} converges also in $B^s_{p,q}(\bR^d)$.  If $s>\max\left\{0,d\left(1/p-1\right)\right\}$ we have  the embedding $B_{p,q}^s(\bR^d)\subset L_{\bar{s}}(\bR^d)$ for some $\bar{s}>1$, see, e.g.\ \cite[Corollary 3.7.1]{Coh2003}.
\end{remark}

A simple computation gives us the following characterization of Besov spaces from the scale \eqref{NAscale} on $\bR^d$.

\begin{corollary}\label{cor:BesovChar02}
Let $p\in\left(1,\infty\right)$, $\alpha>0$ and $\tau\in\bR$ such that  $1/\tau=\alpha/d+1/p$. Choose $r\in\bN$ such that $r>\alpha$ and construct a  biorthogonal  wavelet Riesz basis as described above.
Then a locally integrable function $f:\bR^d\to\bR$ is in the Besov space $B^\alpha_{\tau,\tau}(\bR^d)$  if and only if
\begin{equation}\label{BesovZerlegung02}
f=\sum_{k\in\bZ^d}\langle f,\widetilde{\varphi}_k\rangle\,\varphi_k
    + \sum_{i=1}^{2^{d}-1}\sum_{j\in\bN_0}\sum_{k\in\bZ^d}\langle f,\widetilde{\psi}_{\ijk,p'}\rangle\,\psi_{\ijk,p}
\end{equation}
(convergence in $\mathcal D'(\bR^d)$) with
\begin{equation}\label{BesovNormDiscrete02}
\Big(\sum_{k\in\bZ^d}|\langle f,\widetilde{\varphi}_k\rangle|^\tau\Big)^{1/\tau}
    + \Big(
    \sum_{i=1}^{2^{d}-1}\sum_{j\in\bN_0}
    \sum_{k\in\bZ^d}|\langle f,\widetilde{\psi}_{\ijk,p'}\rangle|^\tau\Big)^{1/\tau}
    <   \infty  \, ,
\end{equation}
and \eqref{BesovNormDiscrete02}  is an equivalent (quasi-)norm  for $B^\alpha_{\tau,\tau}(\bR^d)$.
\end{corollary}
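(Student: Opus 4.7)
The corollary follows from Theorem~\ref{thm:BesovChar01} applied with the parameter choice $(s,p,q)=(\alpha,\tau,\tau)$, so the first step is to verify that this choice is admissible. Since $1/\tau = \alpha/d + 1/p$ with $p>1$ and $\alpha>0$, we have $d(1/\tau-1) = \alpha - d(1-1/p) < \alpha$, so the condition $s>\max\{0,d(1/\tau-1)\}$ required by the theorem is satisfied. The assumption $r>\alpha$ in the corollary is the same as the condition $r>s$ in the theorem.

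The next observation is that the expansion in \eqref{BesovZerlegung02} coincides with the expansion one would write using the $\tau$-normalization, i.e.\ $\sum \langle f,\widetilde{\psi}_{i,j,k,\tau'}\rangle\,\psi_{i,j,k,\tau}$. Indeed, by the definition of $\psi_{i,j,k,p}$ and $\widetilde{\psi}_{i,j,k,p'}$ we have
\begin{equation*}
\langle f,\widetilde{\psi}_{i,j,k,p'}\rangle\,\psi_{i,j,k,p}
= 2^{jd(1/p'-1/2)}\cdot 2^{jd(1/p-1/2)} \langle f,\widetilde{\psi}_{i,j,k}\rangle\,\psi_{i,j,k}
= \langle f,\widetilde{\psi}_{i,j,k}\rangle\,\psi_{i,j,k},
\end{equation*}
since $1/p+1/p'=1$, and the same identity holds with $\tau$ in place of $p$. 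Thus the representation \eqref{BesovZerlegung02} is identical to the one provided by Theorem~\ref{thm:BesovChar01}.

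Finally, I would rewrite the equivalent norm from Theorem~\ref{thm:BesovChar01} with $(s,p,q)=(\alpha,\tau,\tau)$, which reads (for the wavelet part)
\begin{equation*}
\sum_{i,j}2^{j\alpha\tau}\sum_{k}|\langle f,\widetilde{\psi}_{i,j,k,\tau'}\rangle|^\tau
= \sum_{i,j}2^{j\alpha\tau}\cdot 2^{jd\tau(1/\tau'-1/2)}\sum_{k}|\langle f,\widetilde{\psi}_{i,j,k}\rangle|^\tau,
\end{equation*}
and compare it with the expression appearing in \eqref{BesovNormDiscrete02}, which equals
\begin{equation*}
\sum_{i,j,k}|\langle f,\widetilde{\psi}_{i,j,k,p'}\rangle|^\tau
= \sum_{i,j}2^{jd\tau(1/p'-1/2)}\sum_{k}|\langle f,\widetilde{\psi}_{i,j,k}\rangle|^\tau.
\end{equation*}
Equating the two $j$-dependent prefactors reduces to the identity $\alpha + d(1/\tau'-1/2) = d(1/p'-1/2)$, i.e.\ $\alpha/d = 1/\tau - 1/p$, which is precisely the hypothesis linking $\alpha$, $\tau$ and $p$. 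The scaling factor and the coefficient part for $\varphi_k$ are treated identically (there is no $j$-sum there, so no rescaling is needed). This gives the equivalence of the two (quasi-)norms and completes the argument.

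The step I expect to require the most care is the bookkeeping of the dyadic normalization factors between the $\tau$- and $p$-conventions; everything else is a transparent application of Theorem~\ref{thm:BesovChar01}.
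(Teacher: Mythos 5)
Your proposal is correct and matches the paper's intent exactly: the paper gives no written proof, stating only that the corollary follows from Theorem~\ref{thm:BesovChar01} by ``a simple computation,'' and your computation (checking $\alpha>d(1/\tau-1)$, noting the expansion is normalization-independent, and matching the dyadic prefactors via $\alpha/d=1/\tau-1/p$) is precisely that computation, carried out correctly.
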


\subsection{From weighted Sobolev spaces to Besov spaces}
\label{WSBes}

In this subsection we will prove two embeddings of weighted Sobolev spaces into Besov spaces. We first focus on the case where the integrability parameter $p\in[2,\infty)$ of the weighted Sobolev spaces and the Besov spaces under consideration coincide, see Lemma \ref{lem:WSBes-pp}. This  will pave the way for proving a general embedding of weighted Sobolev spaces into the Besov spaces from the scale \eqref{NAscale} in Theorem \ref{thm:WSBes-ptau}. Remember that in this article $\domain\subseteq\bR^d$ always denotes a bounded Lipschitz domain.

\begin{lemma}\label{lem:WSBes-pp}
Let $\domain$ be a bounded Lipschitz domain in $\bR^d$. Let $\gamma,\nu\in\rklam{0,\infty}$ and $p\in\left[2,\infty\right)$. Then the following embedding holds:
\begin{equation}\label{eq:WSBes-pp}
H^\gamma_{p,d-\nu p}(\domain) \hookrightarrow B^{\gamma\land\nu}_{p,p}(\domain).
\end{equation}
\end{lemma}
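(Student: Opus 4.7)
My plan is a two-step argument: first reduce the embedding to the diagonal case $\gamma=\nu$, then prove that case via an extension-by-zero argument followed by a classical Sobolev--Besov embedding on $\bR^d$.

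\textbf{Step 1 (Reduction).} Set $s:=\gamma\wedge\nu$. If $\gamma\le\nu$, then because $\rho\le\operatorname{diam}\domain$ on $\domain$, the weight in the defining integral of $H^\gamma_{p,d-\nu p}(\domain)$ dominates the one in $H^\gamma_{p,d-\gamma p}(\domain)$ up to a constant (for any multi-index $\alpha$ one has $\rho^{p(|\alpha|-\gamma)}\le(\operatorname{diam}\domain)^{p(\nu-\gamma)}\rho^{p(|\alpha|-\nu)}$); this gives $H^\gamma_{p,d-\nu p}(\domain)\hookrightarrow H^\gamma_{p,d-\gamma p}(\domain)=H^s_{p,d-sp}(\domain)$ for integer $\gamma$, the general case following by complex interpolation. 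If $\gamma>\nu$, monotonicity of the smoothness parameter in the Bessel scale, transferred to the weighted setting via the equivalent dyadic norm (Lemma~\ref{lem:EquivWS}), yields $H^\gamma_{p,d-\nu p}(\domain)\hookrightarrow H^\nu_{p,d-\nu p}(\domain)=H^s_{p,d-sp}(\domain)$. Hence it suffices to prove
\begin{equation*}
H^s_{p,d-sp}(\domain)\hookrightarrow B^s_{p,p}(\domain).
\end{equation*}

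\textbf{Step 2 (Extension by zero).} For $u\in H^s_{p,d-sp}(\domain)$, denote $\tilde u:=u\mathds{1}_\domain$. I claim that $E\colon u\mapsto\tilde u$ is bounded from $H^s_{p,d-sp}(\domain)$ into $H^s_p(\bR^d)$. For integer $s$, density of $C^\infty_c(\domain)$ in $H^s_{p,d-sp}(\domain)$ (Lemma~\ref{lem:collection}(i)) reduces the claim to $u\in C^\infty_c(\domain)$, in which case $\tilde u\in C^\infty_c(\bR^d)$ and one has for each $|\alpha|\le s$
\begin{equation*}
\|D^\alpha u\|_{L_p(\domain)}^p\le(\operatorname{diam}\domain)^{p(s-|\alpha|)}\int_\domain|D^\alpha u|^p\rho^{p(|\alpha|-s)}\,dx\lesssim\|u\|_{H^s_{p,d-sp}(\domain)}^p.
\end{equation*}
Thus $\|\tilde u\|_{W^s_p(\bR^d)}=\|u\|_{W^s_p(\domain)}\lesssim\|u\|_{H^s_{p,d-sp}(\domain)}$, and $W^s_p(\bR^d)=H^s_p(\bR^d)$. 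For non-integer $s$, I interpolate between the two neighbouring integer exponents $\lfloor s\rfloor$ and $\lceil s\rceil$, using that both the weighted scale (via the dyadic representation of Lemma~\ref{lem:EquivWS}; see also \cite[Proposition~2.4]{Lot2000}) and the unweighted Bessel scale admit the expected complex interpolation formulas.

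\textbf{Step 3 (Conclusion).} For $p\in[2,\infty)$, the identification $H^s_p(\bR^d)=F^s_{p,2}(\bR^d)$ combined with the standard embedding $F^s_{p,2}(\bR^d)\hookrightarrow B^s_{p,\max(p,2)}(\bR^d)=B^s_{p,p}(\bR^d)$ (see, e.g., \cite{Tri1995}) gives $H^s_p(\bR^d)\hookrightarrow B^s_{p,p}(\bR^d)$. Since the differences $\Delta^n_h u$ defined on $\domain$ in Definition~\ref{def:Besov} vanish whenever some argument leaves $\domain$, whereas $\Delta^n_h\tilde u$ only picks up zero contributions on $\bR^d\setminus\domain$, one has $\|\Delta^n_h u\|_{L_p(\domain)}\le\|\Delta^n_h\tilde u\|_{L_p(\bR^d)}$, so $\|u\|_{B^s_{p,p}(\domain)}\le\|\tilde u\|_{B^s_{p,p}(\bR^d)}$. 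Chaining these estimates yields
\begin{equation*}
\|u\|_{B^s_{p,p}(\domain)}\le\|\tilde u\|_{B^s_{p,p}(\bR^d)}\lesssim\|\tilde u\|_{H^s_p(\bR^d)}\lesssim\|u\|_{H^s_{p,d-sp}(\domain)},
\end{equation*}
which is the required embedding. The main obstacle is the continuity of $E$ for non-integer $s$: the integer case is a one-line Hardy-type computation, but the non-integer case depends on a compatibility of complex interpolation between the weighted and unweighted Bessel potential scales. A more self-contained alternative, closer in spirit to the wavelet-based approach developed later for Theorem~\ref{thm:WSBes-ptau}, would be to prove the bound on $E$ directly via a dyadic-shell decomposition of $\domain$ and local rescalings on each shell.
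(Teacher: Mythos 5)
Your proof is correct, and while its skeleton (reduce to $s=\gamma\wedge\nu$, handle integer smoothness first, interpolate to reach fractional $s$) matches the paper's, the core embedding $H^s_{p,d-sp}(\domain)\hookrightarrow B^s_{p,p}(\domain)$ is established by a genuinely different route. The paper never leaves the domain: it invokes Kufner's identity $H^k_{p,d-kp}(\domain)=\mathring{W}^k_p(\domain)$, the embedding $W^k_p(\domain)\hookrightarrow B^k_{p,p}(\domain)$ for $p\ge 2$, and, for fractional $s$, complex interpolation of Triebel--Lizorkin spaces \emph{on the Lipschitz domain} together with Dispa's intrinsic characterization. You instead push everything to $\bR^d$ via the zero-extension operator $E$, use only the classical facts $H^s_p(\bR^d)=F^s_{p,2}(\bR^d)\hookrightarrow B^s_{p,p}(\bR^d)$, and return with the elementary pointwise domination $|\Delta^n_h u|\le|\Delta^n_h\tilde u|$ on $\domain$, which follows directly from Definition~\ref{def:Besov} and spares you the intrinsic-characterization citation. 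The price is the boundedness of $E$ on the fractional scale; your interpolation argument for it is sound (both endpoints are the integer Hardy-type computation, and the identification of the interpolation space is exactly \cite[Proposition~2.4]{Lot2000}, the same ingredient the paper uses), and it has the conceptual benefit of exposing why the diagonal weight $d-sp$ is the right one: it is precisely the weight for which extension by zero lands in $H^s_p(\bR^d)$. Two points worth tightening: in Step 1, for fractional $\gamma\le\nu$ the interpolation must be run with the \emph{constant} target weight $d-\gamma p$ at both endpoints (taking $d-kp$ and $d-(k+1)p$ as targets would require $\nu\ge k+1$, which can fail when $\gamma\le\nu<\lceil\gamma\rceil$); alternatively this weight monotonicity is \cite[Corollary~4.2]{Lot2000}, which the paper cites directly. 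And after extending $E$ by density from $C^\infty_0(\domain)$ one should note that the continuous extension still agrees with extension by zero almost everywhere, which follows from the accompanying $L_p$-convergence on $\domain$ and on $\bR^d$.
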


\begin{proof}
We start the proof by considering the case where $\gamma=\nu$, i.e., we prove that for $\gamma>0$ and $p\geq2$ we have
\begin{equation}\label{eq:WSBes-pp01}
H^\gamma_{p,d-\gamma p}(\domain) \hookrightarrow B^{\gamma}_{p,p}(\domain).
\end{equation}
It is well-known, see \cite[Theorem 9.7]{Kuf1980}, that for $k\in\bN_0$,
\begin{equation}\label{KufN}
H^k_{p,d-k p}(\domain) = \mathring{W}^k_{p}(\domain),
\end{equation}
where $\mathring{W}^k_{p}(\domain)$ denotes the completion of  $C_0^\infty(\domain)$ in the classical $L_p(\domain)$-Sobolev space $W^k_p(\domain)$. We use the convention $W^0_p(\domain):=L_p(\domain)$. Since $p\geq 2$ we have $W^k_p(\domain)\hookrightarrow B^k_{p,p}(\domain)$, see, e.g., \cite[Remark~2.3.3/4 and Theorem~4.6.1(b)]{Tri1995} together with \cite{Dis2003}. We therefore obtain \eqref{eq:WSBes-pp01} for $k\in\bN_0$. In the case of fractional $\gamma\in(0,\infty)\backslash \bN$ we argue as follows. Let $\gamma=k+\eta$ with $k\in\bN_0$ and $\eta\in\rklam{0,1}$. By \cite[Proposition~2.4]{Lot2000},
\begin{align*}
H^{k+\eta}_{p,d-(k+\eta)p}(\domain)
=
\geklam{H^k_{p,d-kp}(\domain), H^{k+1}_{p,d-(k+1)p}(\domain)}_\eta.
\end{align*}
Using \eqref{KufN} we have
\begin{align*}
H^{k+\eta}_{p,d-(k+\eta)p}(\domain)
=
\geklam{\mathring{W}^k_{p}(\domain), \mathring{W}^{k+1}_{p}(\domain)}_{\eta}
\hookrightarrow
\geklam{W^k_p(\domain), W^{k+1}_p(\domain)}_\eta.
\end{align*}
For any $k\in\bN$, it is well-known that the Sobolev space $W^k_p(\domain)$ coincides with the Triebel-Lizorkin spaces $F^k_{p,2}(\domain)$, see, e.g., \cite[Theorem 1.122]{Tri2006}. Moreover, we have $L_p(\domain)\hookrightarrow F^0_{p,2}(\domain)$, see  \cite[(1.2) together with Definition~1.95]{Tri2006}. Thus,
\begin{align*}
H^{k+\eta}_{p,d-(k+\eta)p}(\domain)
\hookrightarrow
\geklam{F^k_{p,2}(\domain), F^{k+1}_{p,2}(\domain)}_\eta.
\end{align*}
The fact that Triebel-Lizorkin spaces constitute a scale of complex interpolation spaces, see, e.g., \cite[Corollary 1.111]{Tri2006}, leads to
\begin{align*}
H^{k+\eta}_{p,d-(k+\eta)p}(\domain)
\hookrightarrow
F^{k+\eta}_{p,2}(\domain).
\end{align*}
For $p\geq 2$, it is well known that $F^s_{p,2}(\domain)\hookrightarrow B^s_{p,p}(\domain)$ for any $s\in\bR$, see, e.g.,  (1.299) in \cite{Tri2006} together with  \cite{Dis2003}. Therefore,
\begin{align*}
H^{k+\eta}_{p,d-(k+\eta)p}(\domain)
\hookrightarrow
B^{k+\eta}_{p,p}(\domain),
\end{align*}
and \eqref{eq:WSBes-pp01} is proved for general $\gamma>0$.
The embedding \eqref{eq:WSBes-pp} for $\gamma\neq\nu$ follows now by using standard arguments. Indeed, since $\gamma\geq\gamma\land\nu$ we have
\begin{align*}
H^\gamma_{p,d-\nu p}(\domain) \hookrightarrow H^{\gamma\land\nu}_{p,d-\nu p}(\domain),
\end{align*}
see \cite{Lot2000}, the line after Definition 2.1. Furthermore, $d-\nu p\leq d-(\gamma\land\nu)p$ implies
\begin{equation*}
H^{\gamma\land\nu}_{p,d-\nu p}(\domain) 
\hookrightarrow 
H^{\gamma\land\nu}_{p,d-(\gamma\land\nu) p}(\domain),
\end{equation*}
see \cite[Corollary 4.2]{Lot2000}. A combination of these two embeddings with \eqref{eq:WSBes-pp01} finally gives  \eqref{eq:WSBes-pp}.
\end{proof}

\begin{remark}\label{rem:Dirichlet}
Since $\domain\subseteq \bR^d$ is assumed to be a bounded Lipschitz domain, we know by \cite[Chapter~VIII, Theorem~2]{JonWal1984} that for $1/p<s$ the operator $\mathrm{Tr}$, initially defined to $C^\infty(\overline{\domain})$ as the restriction on $\partial \domain$, extends to a bounded linear operator from $B^s_{p,p}(\domain)$ to $B^{s-1/p}_{p,p}(\partial\domain)$. In this case we denote by $B^s_{p,p,0}(\domain)$ the subspace of $B^s_{p,p}(\domain)$ with zero boundary trace, i.e., 
\begin{align*}
B^s_{p,p,0}(\domain) := \ggklam{u\in B^s_{p,p}(\domain) \,:\, \mathrm{Tr}\,u = 0},
\quad \frac{1}{p}<s<1+\frac{1}{p}.
\end{align*}
By \cite[Theorem 3.12]{JerKen1995} these spaces coincide with the closure of $C_0^\infty(\domain)$ in $B^s_{p,p}(\domain)$, i.e., 
\begin{align*}
\mathring{B}^s_{p,p}(\domain) := \overline{C_{0}^\infty(\domain)}^{\nnrm{\cdot}{B^s_{p,p}(\domain)}}= B^s_{p,p,0}(\domain) \quad \text{ for } \quad\frac{1}{p}<s<1+\frac{1}{p}.
\end{align*}
Thus, if $1/p<\gamma\land\nu<1+1/p$, Lemma \ref{lem:WSBes-pp} together with Lemma  \ref{lem:collection}$(i)$, lead to
\begin{align*}
H^\gamma_{p,d-\nu p}(\domain)\hookrightarrow \mathring{B}^{\gamma\land\nu}_{p,p}(\domain)= B^{\gamma\land\nu}_{p,p,0}(\domain)
= \ggklam{u\in B^{\gamma\land\nu}_{p,p}(\domain)\,:\, \mathrm{Tr}\,u=0}.
\end{align*}
In Section \ref{SPDEs} we considered SPDEs in the setting
of \cite{Kim2011}. The solutions to
these equations are stochastic processes taking values in
$H^\gamma_{p,d-\nu p}(\domain)$ with $\nu:=1+\frac{d-\theta}{p}$,
where the value of $\theta$ never leaves the range
$d-1<\theta<d+p-1$, compare also \cite{KryLot1999b}. This condition is equivalent to $1/p<\nu<1+1/p$
with $\nu$ as introduced before. Hence, if $\gamma>1/p$ we deal with solutions with zero boundary condition, in the
sense that the well defined linear and continuous boundary trace
$\mathrm{Tr}$ equals zero.
\end{remark}

In the second part of this subsection we investigate the relationship between weighted Sobolev spaces and the Besov spaces from the scale \eqref{NAscale}.
In \cite{DahDeV1997}, the scale \eqref{NAscale} is used to analyse the regularity of harmonic functions on a bounded Lipschitz domain $\domain\subseteq\bR^d$. Denoting by $\Theta(\domain)$ the set of harmonic functions on $\domain$ we can formulate the main result therein, \cite[Theorem 3.2]{DahDeV1997}, as follows:
\begin{align*}
\Theta(\domain) \cap B^\nu_{p,p}(\domain) \hookrightarrow B^\alpha_{\tau,\tau}(\domain), \quad \frac{1}{\tau}=\frac{\alpha}{d}+\frac{1}{p}, \quad \text{for all } 0<\alpha<\nu \frac{d}{d-1}=\sup_{m\in\bN}\min\sggklam{{m,\nu \frac{d}{d-1}}}.
\end{align*}
One of the main ingredients for the proof of this statement is the fact that harmonic functions contained in $B^\nu_{p,p}(\domain)$ have finite weighted Sobolev half-norm
\begin{align*}
|\, u\,|_{H^m_{p,d-\nu p}(\domain)}:= \sgrklam{\sum_{\substack{\alpha\in\bN_0^d \\  |\alpha | = m}}\int_{\domain} \big| \rho(x)^{|\alpha|} D^\alpha u(x)\big|^p \rho(x)^{-\nu p} dx}^{1/p}
\end{align*}
for any $m\in\bN$, see \cite[Theorem 3.1]{DahDeV1997} for details. 
It turns out that arguing along the lines of \cite[Theorem 3.2]{DahDeV1997} one can even show that for $\nu>0$ and $m\in\bN$,
\begin{align*}
H^m_{p,d-\nu p}(\domain)\cap B^\nu_{p,p}(\domain)\hookrightarrow B^\alpha_{\tau,\tau}(\domain), \quad \frac{1}{\tau}=\frac{\alpha}{d}+\frac{1}{p},\quad\!\! \text{for all} \quad\!\! 0 < \alpha < \min\sggklam{m ,\nu\frac{d}{d-1}}.
\end{align*}
Combining this with Lemma \ref{lem:WSBes-pp}, we obtain
\begin{equation}\label{eq:WSBes-ptau-bN}
H^m_{p,d-\nu p}(\domain)\hookrightarrow B^\alpha_{\tau,\tau}(\domain), \quad \frac{1}{\tau}=\frac{\alpha}{d}+\frac{1}{p},\quad\!\! \text{for all} \quad\!\! 0 < \alpha < \min\sggklam{m ,\nu\frac{d}{d-1}}.
\end{equation}

In what follows we give a detailed proof of the extension of \eqref{eq:WSBes-ptau-bN} to arbitrary smoothness parameters $\gamma>0$ instead of $m\in\bN$.
To this end, let us fix some notations. We will use a wavelet Riesz-basis
\begin{equation*}
\ggklam{\varphi_k,\;\psi_\ijk\,:\,(\ijk)\in\{1,\cdots,2^d-1\}\times\bN_0\times\bZ^d}
\end{equation*}
of $L_2(\bR^d)$ which satisfies the assumptions   from Section \ref{Besov} with $\domain\subseteq [-\domainc,\domainc]^d$ and  $r$ large enough---we will always clarify what we mean by that in the particular theorems.
Given $(j,k)\in\bN_0\times\bZ^d$, let
\begin{equation*}
Q_\jk:=2^{-j}k+2^{-j}\,[-\domainc,\domainc]^{d},
\end{equation*}
so that $ \supp\psi_\ijk\subset Q_\jk $ for all $i\in\{1,\ldots,2^d-1\}$ and $\supp\varphi_k\subset Q_{0,k}$ for all $k\in\bZ^d$. Remember that the supports of the corresponding dual basis   fulfil   the same requirements.
For our purpose the set of all indices associated with those wavelets that may have common support with the domain $\domain$ will play an important role and we denote  it  by
\begin{equation*}
 \Lambda:=\big\{(\ijk)\in\{1,\ldots,2^d-1\}\times\bN_0\times\bZ^d\,\big|\,Q_{\jk}\cap\domain\neq\emptyset\big\}.
\end{equation*}
Furthermore, we want to distinguish the indices corresponding to wavelets with support in the interior of the domain from the ones corresponding to wavelets which might have support on the boundary of $\domain$. To this end we write
\begin{align*}
\rho_\jk&:=\text{dist}(Q_{j,k},\partial\domain)=\inf_{x\in Q_\jk}\rho(x),\\
\Lambda_j &:= \big\{ (i,l,k)\in\Lambda\,:\,l=j \big\} ,\\
\Lambda_{j, m}&:=\left\{(\ijk)\in\Lambda_j\,:\,\; m2^{-j}\leq\rho_\jk<( m+1)2^{-j}\right\}   ,\\
\Lambda_j^0&:=\Lambda_j\setminus\Lambda_{j,0},\\
\Lambda^0&:=\bigcup_{j\in\bN_0}\Lambda_j^0,
\end{align*}
where $j, m\in\bN_0$ and $k\in\bZ^d$.
Later we will also use the notation
\begin{equation*}
 \Gamma:=\{k\in\bZ^d:Q_{0,k}\cap\domain\neq\emptyset\}.
\end{equation*}

The following lemma paves the way for proving \eqref{eq:WSBes-ptau-bN} for arbitrary $\gamma>0$ instead of just $\gamma=m\in\bN$. It establishes an estimate for, roughly speaking, a discretization of a weighted Sobolev norm in terms of the supports of the wavelets in the interior of $\domain$ at a fixed scaling level $j\in\bN_0$. For better readability, we place the quite technical proof in the appendix. Remember that we write $A^\circ$ for the interior of an arbitrary subset $A$ of $\bR^d$.

\begin{lemma}\label{lem:fancy}
Let $\domain$ be a bounded Lipschitz domain in $\bR^d$. 
Let $p\in[2,\infty)$, $\gamma\in(0,\infty)$ and $\nu\in\bR$ with $\gamma\geq \nu$. 
Furthermore, assume $u\in H^\gamma_{p,d-\nu p}(\domain)$.
Then, for all $j\in\bN_0$, the inequality
\begin{equation*}
\sum_{(\ijk)\in\Lambda^0_j}\big(\rho_{\jk}^{\gamma-\nu}|u|_{B_{p,p}^\gamma(Q_\jk^\circ)}\big)^p\leq N\,\nnrm{u}{H^\gamma_{p,d-\nu p}(\domain)}^p
\end{equation*}
holds, with a constant $N\in(0,\infty)$ which does not depend on $j$ and $u$.
\end{lemma}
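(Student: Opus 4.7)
The plan is to reduce the claimed discrete sum to a weighted continuous integral that is dominated by the intrinsic characterization of $\|u\|_{H^\gamma_{p,d-\nu p}(\domain)}$. A first observation, to be used throughout, is that for every $(\ijk) \in \Lambda^0_j$ one has $\rho(x) \sim \rho_\jk$ uniformly on $Q_\jk$: by definition of $\Lambda^0_j$ we have $\rho_\jk \geq 2^{-j}$, whereas $\operatorname{diam}(Q_\jk) \leq 2\domainc\,2^{-j} \leq 2\domainc\,\rho_\jk$, so that $\rho_\jk \leq \rho(x) \leq (1+2\domainc)\,\rho_\jk$. In particular $\rho_\jk^{p(\gamma-\nu)} \sim \rho(x)^{p(\gamma-\nu)}$ on $Q_\jk$, and since each $Q_\jk$ has width $\sim 2^{-j}$ with fixed wavelet locality constant $\domainc$, the family $\{Q_\jk : (\ijk) \in \Lambda^0_j\}$ has uniformly bounded overlap.

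For integer $\gamma \in \bN$ I would use the embedding $W^\gamma_p(Q_\jk) \hookrightarrow B^\gamma_{p,p}(Q_\jk)$, valid for $p \geq 2$ with constants independent of $\jk$ by rescaling to a reference cube (recall $W^\gamma_p = F^\gamma_{p,2} \hookrightarrow B^\gamma_{p,p}$ for $p\geq 2$, as already exploited in the proof of Lemma~\ref{lem:WSBes-pp}). By bounded overlap the sum in question is then dominated by
\[
C \sum_{|\alpha|=\gamma} \int_\domain \rho(x)^{p(\gamma-\nu)}|D^\alpha u|^p \, dx \leq C \|u\|^p_{H^\gamma_{p,d-\nu p}(\domain)},
\]
the last step being the intrinsic integral characterization of integer-order weighted Sobolev spaces recalled in Section~\ref{WS}.

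For non-integer $\gamma = m + \sigma$ with $m \in \bN_0$ and $\sigma \in (0,1)$ I would invoke the Slobodeckij characterization of $B^\gamma_{p,p}$ on the Lipschitz cube $Q_\jk$,
\[
|u|^p_{B^\gamma_{p,p}(Q^\circ_\jk)} \sim \sum_{|\alpha|=m}\int_{Q_\jk}\!\!\int_{Q_\jk}\frac{|D^\alpha u(x)-D^\alpha u(y)|^p}{|x-y|^{d+\sigma p}}\, dx\,dy,
\]
with constants uniform in $\jk$ after rescaling. Bounded overlap, combined with $\rho(x)\sim\rho(y)\sim\rho_\jk$ on $Q_\jk$, then reduces the task to controlling
\[
I := \sum_{|\alpha|=m} \int\!\!\int_{\Sigma_j} \rho(x)^{p(\gamma-\nu)}\frac{|D^\alpha u(x)-D^\alpha u(y)|^p}{|x-y|^{d+\sigma p}}\, dx\,dy,
\]
where $\Sigma_j := \{(x,y)\in\domain^2 : |x-y|\leq C 2^{-j},\ \rho(x)\sim\rho(y)\gtrsim 2^{-j}\}$. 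Splitting $\Sigma_j$ along dyadic levels $n\geq 0$ defined by $\rho(x)\sim 2^{-n}$ (which forces $n \leq j + O(1)$), and choosing $k_1$ in Remark~\ref{rem:EquivWS}(ii) large enough that the cutoff $\xi_n$ equals $1$ on the annulus hosting the $n$-th piece, the change of variables $(x,y) = (2^{-n}\tilde x, 2^{-n}\tilde y)$ together with the scaling identity $\|v(c\cdot)\|^p_{H^\gamma_p}=c^{p\gamma-d}\|(c^{-2}-\Delta)^{\gamma/2}v\|^p_{L_p}$ already used in the proof of Theorem~\ref{thm:liftReg} leaves, after a direct exponent count, exactly the prefactor $2^{-n(d-\nu p)}$ times the Slobodeckij seminorm of $\xi_n(2^{-n}\cdot)\,u(2^{-n}\cdot)$ on $\bR^d$. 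The embedding $H^\gamma_p(\bR^d)\hookrightarrow B^\gamma_{p,p}(\bR^d)$ (valid for $p\geq 2$) converts the latter into $\|\xi_n(2^{-n}\cdot)\,u(2^{-n}\cdot)\|^p_{H^\gamma_p(\bR^d)}$, so summation over $n$ and Lemma~\ref{lem:EquivWS} deliver the bound by $\|u\|^p_{H^\gamma_{p,d-\nu p}(\domain)}$.

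The main obstacle is the bookkeeping in the rescaling step: one must verify that the contributions from the weight $\rho^{p(\gamma-\nu)} \sim 2^{-np(\gamma-\nu)}$, the Jacobian $2^{-2nd}$, the denominator $|x-y|^{d+\sigma p}$, and the chain-rule factor $2^{nmp}$ arising from $D^\alpha u(2^{-n}\tilde x) = 2^{nm}D^\alpha[u(2^{-n}\cdot)](\tilde x)$ combine to give exactly $2^{-n(d-\nu p)}$, which uses the identity $\gamma = m+\sigma$ in an essential way. Once this cancellation is checked and one confirms that the level sum collapses via Lemma~\ref{lem:EquivWS}, the constant is seen to be independent of $j$ and $u$, as required.
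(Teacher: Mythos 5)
Your argument is correct in substance, but it recombines the local Besov seminorms by a genuinely different mechanism than the paper. The paper's proof shares your overall skeleton (localize to the cubes $Q_\jk$, sort them into dyadic boundary strips on which the cut-offs $\xi_n$ of Remark~\ref{rem:EquivWS}(ii) equal one, rescale, apply $H^\gamma_p(\bR^d)\hookrightarrow B^\gamma_{p,p}(\bR^d)$ and Lemma~\ref{lem:EquivWS}), but at the recombination step it bounds each $|u|_{B^\gamma_{p,p}(Q_\jk^\circ)}$ by a Peetre $K$-functional, exploits the infimum structure to sum over a disjointified family of $(2\domainc)^d$ subcollections with a single global competitor $g\in W^r_p(\domain)$, and then invokes the nontrivial direction of the Johnen--Scherer equivalence $K_r(t^r,\cdot,\domain)_p\sim\omega^r(t,\cdot,\domain)_p$ on the Lipschitz domain to return to $|\xi_{j-n}u|_{B^\gamma_{p,p}}$; this works uniformly for all $\gamma>0$ with no case distinction. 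You instead pass to pointwise characterizations (derivatives for integer $\gamma$, the Slobodeckij double integral for $\gamma=m+\sigma$) so that the sum over cubes collapses by bounded overlap of $Q_\jk\times Q_\jk$ together with $\rho\sim\rho_\jk$ on $Q_\jk$; this avoids the deep direction of Johnen--Scherer entirely and, in the integer case, gives a particularly clean bound directly via the integral characterization of $H^\gamma_{p,d-\nu p}(\domain)$. The price is that you need the two-sided equivalence $|u|_{B^\gamma_{p,p}(Q_\jk^\circ)}\lesssim{}$(Slobodeckij resp.\ Sobolev seminorm) with constants uniform in $(j,k)$: the scaling check you flag is fine, but the reference-cube inequality itself requires a (fractional) Deny--Lions/Whitney estimate to pass from the full-norm embedding $W^{m+\sigma}_p\hookrightarrow B^\gamma_{p,p}$ ($p\ge 2$) to the seminorm inequality, i.e.\ essentially the same ingredient the paper cites as \cite[Theorem~3.5]{DeVSha1984} resurfaces in your route and should be cited explicitly. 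With that reference added and the exponent count carried out (which does yield $2^{-n(d-\nu p)}$ exactly as you predict), your proof is complete.
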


Now we can prove the main result of this section. We use the convention \textquoteleft$1/0:=\infty$\textquoteright.

\begin{thm}\label{thm:WSBes-ptau}
Let $\domain$ be a bounded Lipschitz domain in $\bR^d$. Let $p\in\left[2,\infty\right)$, and $\gamma,\nu\in(0,\infty)$. Then
\begin{equation*}
H^\gamma_{p,d-\nu p}(\domain) \hookrightarrow B^\alpha_{\tau,\tau}(\domain), \quad \frac{1}{\tau}=\frac{\alpha}{d}+\frac{1}{p},\quad\!\! \text{for all} \quad\!\! 0 < \alpha < \min\sggklam{\gamma ,\nu\frac{d}{d-1}}.
\end{equation*}
\end{thm}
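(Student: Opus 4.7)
The plan is to deploy the wavelet characterization of $B^\alpha_{\tau,\tau}$ given by Corollary~\ref{cor:BesovChar02}, following the strategy of \cite{DahDeV1997} but upgraded from integer to fractional $\gamma$. First I would reduce to the case $\gamma\geq\nu$: when $\gamma<\nu$, the monotonicity embedding $H^\gamma_{p,d-\nu p}(\domain)\hookrightarrow H^\gamma_{p,d-\gamma p}(\domain)$ reduces the situation to the diagonal case with $\nu$ replaced by $\gamma$, and $\min\{\gamma,\gamma d/(d-1)\}=\gamma=\min\{\gamma,\nu d/(d-1)\}$ in this regime, so nothing is lost. Under the assumption $\gamma\geq\nu$, I would fix a biorthogonal wavelet Riesz basis on $\bR^d$ with smoothness and $r>\gamma$ vanishing moments, and apply a bounded extension operator $E:B^\nu_{p,p}(\domain)\to B^\nu_{p,p}(\bR^d)$ (available for bounded Lipschitz domains) to $u\in H^\gamma_{p,d-\nu p}(\domain)\hookrightarrow B^\nu_{p,p}(\domain)$ (Lemma~\ref{lem:WSBes-pp}), producing $v:=Eu\in B^\nu_{p,p}(\bR^d)$ with $\|v\|_{B^\nu_{p,p}(\bR^d)}\leq C\|u\|_{H^\gamma_{p,d-\nu p}(\domain)}$. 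Since the restriction $B^\alpha_{\tau,\tau}(\bR^d)\to B^\alpha_{\tau,\tau}(\domain)$ is bounded, it suffices to bound $\|v\|_{B^\alpha_{\tau,\tau}(\bR^d)}$ by the wavelet sum \eqref{BesovNormDiscrete02}, which I would split into boundary indices $\Lambda\setminus\Lambda^0$ and interior indices $\Lambda^0$.

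For the boundary contribution, the Lipschitz regularity of $\partial\domain$ gives $|\Lambda_{j,0}|\leq C\,2^{j(d-1)}$ at each scale $j$, while Theorem~\ref{thm:BesovChar01} applied to $v$ yields
\begin{equation*}
\sum_{k\in\bZ^d}|\langle v,\widetilde\psi_{i,j,k,p'}\rangle|^p\leq C\,2^{-j\nu p}\,\|v\|_{B^\nu_{p,p}(\bR^d)}^p.
\end{equation*}
H\"older's inequality with exponents $p/\tau$ and $p/(p-\tau)$, together with the identity $(p-\tau)/p=\tau\alpha/d$ forced by $1/\tau=\alpha/d+1/p$, then gives
\begin{equation*}
\sum_{(i,j,k)\in\Lambda_{j,0}}|\langle v,\widetilde\psi_{i,j,k,p'}\rangle|^\tau\leq C\,\|u\|^\tau_{H^\gamma_{p,d-\nu p}(\domain)}\,2^{j\tau(\alpha(d-1)/d-\nu)},
\end{equation*}
which is summable in $j\in\bN_0$ precisely when $\alpha<\nu\,d/(d-1)$.

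For the interior contribution, $(i,j,k)\in\Lambda^0$ implies $Q_{j,k}\subset\domain$, so $\langle v,\widetilde\psi_{i,j,k,p'}\rangle=\langle u,\widetilde\psi_{i,j,k,p'}\rangle$. Standard local approximation theory (Whitney's inequality combined with the vanishing moments of $\widetilde\psi_i$) yields
\begin{equation*}
|\langle u,\widetilde\psi_{i,j,k,p'}\rangle|\leq C\,2^{-j\gamma}\,|u|_{B^\gamma_{p,p}(Q_{j,k}^\circ)}.
\end{equation*}
Applying H\"older once more with the weight $\rho_{j,k}^{(\gamma-\nu)\tau}$ (legitimate since $\gamma\geq\nu$), I would then obtain
\begin{equation*}
\sum_{(i,j,k)\in\Lambda^0_j}|\langle u,\widetilde\psi_{i,j,k,p'}\rangle|^\tau\leq C\,2^{-j\gamma\tau}\Bigl(\sum_{(i,j,k)\in\Lambda^0_j}\rho_{j,k}^{(\gamma-\nu)p}|u|_{B^\gamma_{p,p}(Q_{j,k}^\circ)}^p\Bigr)^{\tau/p}\Bigl(\sum_{(i,j,k)\in\Lambda^0_j}\rho_{j,k}^{-(\gamma-\nu)d/\alpha}\Bigr)^{(p-\tau)/p},
\end{equation*}
where the first factor on the right is controlled directly by Lemma~\ref{lem:fancy}, producing $\|u\|^\tau_{H^\gamma_{p,d-\nu p}(\domain)}$. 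For the second factor I would dyadically decompose $\Lambda^0_j$ into the strips $\{\rho_{j,k}\in[2^{-l},2^{-l+1})\}$ for $l=1,\dots,j$, each containing at most $C\,2^{jd-l}$ indices (again by Lipschitz regularity), and perform a short case distinction according to whether the exponent $(\gamma-\nu)d/\alpha$ exceeds $1$ or not. In each case an elementary geometric sum shows that the $j$-series converges for all $\alpha<\min\{\gamma,\nu\,d/(d-1)\}$.

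The main obstacle I anticipate is the delicate H\"older balancing in the interior step: the weight exponent $(\gamma-\nu)\tau$ must be chosen so as to match \emph{exactly} the weight in Lemma~\ref{lem:fancy} after raising to the power $p/\tau$, while the resulting $\rho^{-(\gamma-\nu)d/\alpha}$-sum has to yield \emph{both} the threshold $\alpha<\gamma$ (dominant when $\gamma\leq\nu d/(d-1)$) and the threshold $\alpha<\nu d/(d-1)$ (dominant when $\gamma>\nu d/(d-1)$). The fractional smoothness makes this step substantially more technical than the integer case treated in \cite[Theorem~3.1]{CioDahKin+2011}, and the key new ingredients that make everything work are the local Whitney/vanishing-moment bound expressed in the intrinsic seminorm $|u|_{B^\gamma_{p,p}(Q_{j,k}^\circ)}$ and the matching formulation of Lemma~\ref{lem:fancy}.
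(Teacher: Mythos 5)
Your proposal is correct and follows essentially the same route as the paper's proof: extension to $B^\nu_{p,p}(\bR^d)$, the wavelet characterization, the boundary/interior split with the Lipschitz count $|\Lambda_{j,0}|\lesssim 2^{j(d-1)}$, the Whitney/vanishing-moment bound $|\langle u,\widetilde\psi_{i,j,k,p'}\rangle|\lesssim 2^{-j\gamma}|u|_{B^\gamma_{p,p}(Q_{j,k}^\circ)}$, the H\"older balancing against Lemma~\ref{lem:fancy}, and the same two geometric series yielding the thresholds $\alpha<\gamma$ and $\alpha<\nu d/(d-1)$ (your dyadic-strip count is equivalent to the paper's $\Lambda_{j,m}$ decomposition, and your reduction of the case $\gamma<\nu$ via the monotone weight embedding is equivalent to the paper's direct use of Lemma~\ref{lem:WSBes-pp}). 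The only item you leave unaddressed is the scaling-function term $\sum_{k\in\Gamma}|\langle u,\widetilde\varphi_k\rangle|^\tau$, which is trivial since $\Gamma$ is finite.
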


\begin{proof}
Let us start with the case $\nu>\gamma$. Then, for any $0<\alpha<\gamma$, we have
\begin{equation*}
H^\gamma_{p,d-\nu p}(\domain) \hookrightarrow B^\gamma_{p,p}(\domain)\hookrightarrow  B^\alpha_{\tau,\tau}(\domain),\quad \frac{1}{\tau}=\frac{\alpha}{d}+\frac{1}{p},
\end{equation*}
where we used Lemma \ref{lem:WSBes-pp} and standard embeddings for Besov spaces. Therefore, in this case the assertion of the theorem follows immediately. From now on, let us assume that $0<\nu\leq\gamma$.
We fix $\alpha$ and $\tau$ as stated in the theorem and choose the wavelet Riesz-basis of $L_2(\bR)$  from above with $r>\gamma$.
We also fix $u\in H^\gamma_{p,d-\nu p}(\domain)$. Due to Lemma \ref{lem:WSBes-pp} we have $u\in B^\nu_{p,p}(\domain)$. As $\mathcal O$ is a Lipschitz domain there exists a linear and bounded extension operator $\mathcal E:B^\nu_{p,p}(\domain)\to B^\nu_{p,p}(\bR^d)$, i.e., there exists a constant $N>0$ such that:
\begin{equation*}
\mathcal E u\big|_{\domain} = u
\qquad \text{and} \qquad
\|\mathcal E u\|_{B^\nu_{p,p}(\bR^d)} \leq N  \|u\|_{B^\nu_{p,p}(\domain)},
\end{equation*}
see, e.g., \cite{Ryc1999}.
In the   sequel   we will  omit  the $\mathcal E$ in our notation and write $u$ instead of $\mathcal E u$.
Theorem \ref{thm:BesovChar01} tells us that the following equality holds on the domain $\domain$:
\begin{align*}
u=\sum_{k\in\Gamma}\langle u,\widetilde\varphi_k\rangle\varphi_k + \sum_{(\ijk)\in\Lambda}\langle u,\widetilde\psi_{\ijk,p'}\rangle\psi_{\ijk,p},
\end{align*}
where the sums converge unconditionally in $B^\nu_{p,p}(\bR^d)$. Furthermore,  cf.\  Corollary \ref{cor:BesovChar02}, we have
\begin{equation*}
\|u\|_{B^\alpha_{\tau,\tau}(\domain)}^\tau
\leq N \Big(\sum_{k\in\Gamma}|\langle u,\widetilde\varphi_k\rangle|^\tau+ \sum_{(\ijk)\in\Lambda}|\langle u,\widetilde\psi_{\ijk,p'}\rangle|^\tau\Big),
\end{equation*}
see also \cite{Dis2003}.
Hence, by Lemma \ref{lem:WSBes-pp}, it is enough to prove  that
\begin{equation}\label{eq:WSBes-ptau-B}
\sum_{k\in\Gamma}|\langle u,\widetilde\varphi_k\rangle|^\tau
    \,\leq\, N\, \|u\|_{B^\nu_{p,p}(\domain))}^\tau
\end{equation}
and
\begin{equation}\label{eq:WSBes-ptau-C}
\sum_{(\ijk)\in\Lambda}|\langle u,\widetilde\psi_{i,j,k,p'}\rangle|^\tau
\leq N\left(\|u\|_{H^\gamma_{p,d-\nu p}(\domain)}+\|u\|_{B^\nu_{p,p}(\domain)}
\right)^\tau.
\end{equation}

We start with \eqref{eq:WSBes-ptau-B}. The index set $\Gamma$ introduced above is finite because of the boundedness of $\domain$, so that we can use Jensen's inequality to obtain
\begin{align*}
\sum_{k\in\Gamma} |\langle u,\widetilde{\varphi}_k\rangle|^\tau
\leq N \bigg(\Big( \sum_{k\in\Gamma} |\langle u,\widetilde{\varphi}_k\rangle|^p\Big)^{1/p} \bigg)^\tau
\leq N\,\|u\|_{B^\nu_{p,p}(\domain)}^\tau.
\end{align*}
In the last step we have used Theorem \ref{thm:BesovChar01} and the boundedness of the extension operator.

Now let us focus on  inequality \eqref{eq:WSBes-ptau-C}. To this end, we use the notations from above and split the expression on the left hand side of \eqref{eq:WSBes-ptau-C} into
\begin{align}\label{eq:WSBes-ptau-split}
\sum_{(\ijk)\in\Lambda^0}
\big| \langle u,\widetilde\psi_{i,j,k,p'} \rangle \big|^\tau
+
\sum_{(\ijk)\in\Lambda\setminus\Lambda^0} \big| \langle u,\widetilde\psi_{i,j,k,p'} \rangle \big|^\tau
=: I + I\!\!I
\end{align}
and estimate each term separately.

Let us begin with $I$. Fix $(\ijk)\in\Lambda^0$. As a consequence of Lemma \ref{lem:fancy}, we know that $u\big|_{Q_\jk^\circ} \in B^\gamma_{p,p}(Q_\jk^\circ)$.
By a Whitney-type inequality,   also   known as the Deny-Lions lemma, see, e.g., \cite[Theorem 3.5]{DeVSha1984},  there exists a polynomial $P_{j,k}$ of total degree less than $\gamma$, and a constant $N$, which does not depend on $j$ or $k$, such that
\begin{equation*}
\| u-P_{j,k}\|_{L_p(Q_\jk)}\leq N \, 2^{-j\gamma}|u|_{B^\gamma_{p,p}(Q_\jk^\circ)}.\end{equation*}
Since $\widetilde\psi_{\ijk,p'}$ is orthogonal to every polynomial of total degree less than $\gamma$, we have
\begin{align*}
\big|\langle u,\widetilde\psi_{\ijk,p'}\rangle\big|
&=\big|\langle u-P_{\jk},\widetilde\psi_{\ijk,p'}\rangle\big|\\
&\leq\|u-P_{\jk}\|_{L_p(Q_\jk)} \, \|\widetilde\psi_{\ijk,p'}\|_{L_{p'}(Q_\jk)}\\
&\leq N \, 2^{-j\gamma}\,\big|u\big|_{B^\gamma_{p,p}(Q_\jk^\circ)}\\
&\leq N \, 2^{-j\gamma}\, \rho_{\jk}^{\nu -\gamma}\,\rho_{\jk}^{\gamma-\nu}\,\big|u\big|_{B^\gamma_{p,p}(Q_\jk^\circ)}.
\end{align*}
Fix  $j\in\bN_0$. Summing over all indices $(\ijk)\in\Lambda_j^0$ and applying H{\"o}lder's inequality with exponents  $p/\tau>1$ and $p/(p-\tau)$ one finds
\begin{align*}
\sum_{(\ijk)\in\Lambda_j^0}
\big| \langle u,\widetilde\psi_{\ijk,p'} \rangle \big|^\tau
&\leq N
\sum_{(\ijk)\in\Lambda_j^0} 2^{-j\gamma\tau} \rho_\jk^{(\nu-\gamma)\tau}
\rho_\jk^{(\gamma-\nu)\tau} \big|u\big|^\tau_{B^\gamma_{p,p}(Q_\jk^\circ)} \nonumber\\
&\leq N
\sgrklam{\sum_{(\ijk)\in\Lambda_j^0} \sgrklam{\rho_{\jk}^{\gamma-\nu}\,\big|u\big|_{B^\gamma_{p,p}(Q_\jk^\circ)}}^p}^{\frac{\tau}{p}}
\sgrklam{
\sum_{(\ijk)\in\Lambda_j^0} 2^{-j\frac{\gamma \tau p}{p-\tau}}
\rho_\jk^{\frac{(\nu-\gamma)\tau p}{p-\tau}}
}^{\frac{p-\tau}{p}}.
\end{align*}
Now we use Lemma \ref{lem:fancy} to obtain
\begin{align}\label{eq:WSBes-ptau-inside}
\sum_{(\ijk)\in\Lambda_j^0}
\big| \langle u,\widetilde\psi_{\ijk,p'} \rangle \big|^\tau
&\leq N\,
\nnrm{u}{H^\gamma_{p,d-\nu p}(\domain)}^\tau
\sgrklam{
\sum_{(\ijk)\in\Lambda_j^0} 2^{-j\frac{\gamma \tau p}{p-\tau}}
\rho_\jk^{\frac{(\nu-\gamma)\tau p}{p-\tau}}
}^{\frac{p-\tau}{p}},
\end{align}
with a constant $N$, which does not depend on the level $j$. In order to estimate the sum on the right hand side we use the Lipschitz character of the domain $\domain$ which implies that
\begin{align}\label{eq:LipFin}
 |\Lambda_{j, m}|\leq N\, 2^{j(d-1)}\qquad\text{ for all $j, m\in\bN_0$.}
\end{align}
Moreover, the  boundedness  of $\domain$ yields  $\Lambda_{j, m}=\emptyset$ for all $j, m\in\bN_0$ with $ m\geq N 2^j$, where the constant $N$ does not depend on $j$ or $m$. Consequently,
\begin{equation}\label{eq:uglySums}
\begin{aligned}
\bigg(
\sum_{(\ijk)\in\Lambda_j^0}
2^{-j\frac{\gamma p\tau}{p-\tau}}
\rho_{j,k}^{\frac{(\nu -\gamma)p\tau}{p-\tau}}
\bigg)^{\frac{p-\tau}{p}}
&\leq
\bigg(
\sum_{m=1}^{N 2^j}
\sum_{(\ijk)\in\Lambda_{j, m}}
2^{-j\frac{\gamma p\tau}{p-\tau}}
\rho_{j,k}^{\frac{(\nu-\gamma)p\tau}{p-\tau}}
\bigg)^{\frac{p-\tau}p} \\
&\leq
N \bigg(
\sum_{ m=1}^{N 2^j} 2^{j(d-1)} \, 2^{-j\frac{\gamma p\tau}{p-\tau}} ( m \, 2^{-j})^{\frac{(\nu -\gamma)p\tau}{p-\tau}}
\bigg)^{\frac{p-\tau}p} \\
&\leq
N \bigg(
2^{j\left(d-1-\frac{\nu p \tau}{p-\tau}\right)}+ 2^{j\left(d-\frac{\gamma p \tau}{p-\tau}\right)} \bigg)^{\frac{p-\tau}p}.
\end{aligned}
\end{equation}
Now, let us sum over all $j\in\bN_0$. Inequalities \eqref{eq:uglySums} and \eqref{eq:WSBes-ptau-inside} imply
\begin{align*}
\sum_{(\ijk)\in\Lambda^0}
\big| \langle u,\widetilde\psi_{i,j,k,p'} \rangle \big|^\tau
\leq N
\sum_{j\in\bN_0}
\bigg(
2^{j\left(d-1-\frac{\nu p \tau}{p-\tau}\right)} + 2^{j\left(d-\frac{\gamma p \tau}{p-\tau}\right)} \bigg)^\frac{p-\tau}{p}
\nnrm{u}{H^\gamma_{p,d-\nu p}(\domain)}^\tau.
\end{align*}
Obviously, the sums on the right hand side converge  if, and only if, $\alpha\in\left(0,\gamma\wedge \nu \frac{d}{d-1}\right)$.   Finally,
\begin{align*}
\sum_{(\ijk)\in\Lambda^0}
\big| \langle u,\widetilde\psi_{i,j,k,p'} \rangle \big|^\tau
\leq N\, \nnrm{u}{H^\gamma_{p,d-\nu p}(\domain)}^\tau.
\end{align*}

Now we estimate the second term $I\!\!I$ in \eqref{eq:WSBes-ptau-split}. First we fix $j\in\bN_0$ and use H{\"o}lder's inequality and
\eqref{eq:LipFin} to obtain
\begin{align*}
\sum_{(\ijk)\in\Lambda_{j,0}}
\big| \langle u,\widetilde{\psi}_{\ijk,p'} \rangle \big|^\tau
\leq N
\,2^{j(d-1)\frac{p-\tau}{p}}
\Big( \sum_{(\ijk)\in\Lambda_{j,0}}
\big| \langle u,\widetilde{\psi}_{\ijk,p'} \rangle
\big|^p
\Big)^\frac{\tau}{p}.
\end{align*}
Summing over all $j\in\bN_0$ and using H{\"o}lder's inequality again yields
\begin{align*}
\sum_{(\ijk)\in\Lambda\backslash\Lambda^0}
& \big| \langle u,\widetilde{\psi}_{\ijk,p'} \rangle \big|^\tau
= \sum_{j\in\bN_0}
\Big[
\sum_{(\ijk)\in\Lambda_{j,0}} \big| \langle u,\widetilde{\psi}_{\ijk,p'} \rangle \big|^\tau
\Big] \\
&\leq N
\sum_{j\in\bN_0}
\Big[ 2^{j(d-1)\frac{p-\tau}{p}}
\Big( \sum_{(\ijk)\in\Lambda_{j,0}} \big| \langle u,\widetilde{\psi}_{\ijk,p'} \rangle \big|^p \Big)^\frac{\tau}{p}
\Big] \\
&\leq N\,
\bigg(
\sum_{j\in\bN_0}
2^{j\left( \frac{(d-1)(p-\tau)}{p}-\nu \tau \right)\frac{p}{p-\tau}}
\bigg)^{\frac{p-\tau}{p}}
\bigg(
\sum_{j\in\bN_0}
\sum_{(\ijk)\in\Lambda_{j,0}}
2^{j \nu p}\big| \langle u,\widetilde{\psi}_{\ijk,p'} \rangle \big|^p \bigg)^\frac{\tau}{p}.
\end{align*}
Using Theorem \ref{thm:BesovChar01} and the  boundedness  of the extension operator, we obtain
\begin{align*}
\sum_{(\ijk)\in\Lambda\backslash\Lambda^0}
\big| \langle u,\widetilde{\psi}_{\ijk,p'} \rangle \big|^\tau
\leq N\,
\| u \|_{B^\nu_{p,p}(\domain)}^{\tau}
\bigg(
\sum_{j\in\bN_0}
2^{j\left( \frac{(d-1)(p-\tau)}{p}-\nu\tau \right)\frac{p}{p-\tau}}
\bigg)^{\frac{p-\tau}{p}}.
\end{align*}
The series on the right hand side converges if, and only if, $\alpha\in\left( 0,\nu\frac{d}{d-1} \right)$. We thus have
\begin{equation*}
\sum_{(\ijk)\in\Lambda\backslash\Lambda^0}
\big| \langle u,\widetilde{\psi}_{\ijk,p'} \rangle \big|^\tau
\leq N\, 
\| u \|_{B^\nu_{p,p}(\domain)}^{\tau}
\leq N\,
\nnrm{u}{H^\gamma_{p,d-\nu p}(\domain)}^\tau. 
\qedhere\end{equation*}
\end{proof}

\setcounter{equation}{0}
\section{H{\"o}lder--Besov regularity for elements of $\frH^{\gamma,q}_{p,\theta}(\domain,T)$ and implications for SPDEs}\label{RegHB}

In this section, we state and prove our second main result concerning the time-space regularity of the solutions to SPDEs of the form \eqref{eq:mainEq} on bounded Lipschitz domains. We use the scale \eqref{NAscale} to measure the regularity in space, whereas the time-regularity will be measured in terms of H{\"o}lder norms. Since the stochastic parabolic weighted Sobolev spaces $\frH^{\gamma,q}_{p,\theta}(\domain,T)$ are the right spaces to construct a solvability theory for SPDEs, we will first formulate our results in terms of these spaces. As a consequence, each result about existence of solutions to SPDEs in these spaces 
automatically encodes a statement about the H{\"o}lder-Besov regularity of the solution. 
The corresponding results for the solutions in the different settings from Section~\ref{SPDEs} will be presented here in detail.
We will use the following short notation
\begin{align*}
L_q(\Omega_T;B^\alpha_{\tau,\tau}(\domain))
:=
L_q(\Omega\times[0,T],\mathcal{P},\mathbb{P}\otimes dt; B^\alpha_{\tau,\tau}(\domain))
\end{align*}
Let us first clarify  for which range of $\alpha>0$ a stochastic process $u\in\mathfrak{H}^{\gamma,q}_{p,\theta}(\domain,T)$ takes values in $B^\alpha_{\tau,\tau}(\domain)$, $1/\tau=\alpha/d+1/p$.

\begin{thm}\label{thm:BesSpatial}
Let $\domain$ be a bounded Lipschitz domain in $\bR^d$. Let $\gamma+2\in(0,\infty)$, $p,q\in[2,\infty)$, $\theta\in\bR$, and  $u\in\bH^{\gamma+2,q}_{p,\theta-p}(\domain,T)$. Then,
\begin{equation}\label{eq:alpha}
u \in L_q(\Omega_T; B^\alpha_{\tau,\tau}(\domain)),\quad \frac{1}{\tau}=\frac{\alpha}{d}+\frac{1}{p},
\,\,\text{ for all }\,\,
0<\alpha<\min\sggklam{\gamma+2, \sgrklam{1+\frac{d-\theta}{p}}\frac{d}{d-1}}.
\end{equation}
Moreover, for $\alpha$ fulfilling \eqref{eq:alpha}, there exists a constant $N$ which does not depend on $u$, such that
\begin{equation*}
\E\sgeklam{\int_0^T\nnrm{u(t,\cdot)}{B^\alpha_{\tau,\tau}(\domain)}^q \,dt}
\leq N\,
\nnrm{u}{\bH^{\gamma+2,q}_{p,\theta-p}(\domain,T)}^q
.
\end{equation*}
\end{thm}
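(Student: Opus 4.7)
The plan is to recognize that Theorem~\ref{thm:BesSpatial} is essentially a pointwise (in $(\omega,t)$) application of the deterministic embedding established in Theorem~\ref{thm:WSBes-ptau}, combined with an $L_q$-integration argument. The key observation is the bookkeeping of the weight parameter: setting $\nu := 1 + (d-\theta)/p$, one has
\begin{equation*}
\theta - p = d - \nu p,
\end{equation*}
so the state space $H^{\gamma+2}_{p,\theta-p}(\domain)$ for the process $u$ equals $H^{\gamma+2}_{p, d-\nu p}(\domain)$, which is exactly the shape of weighted Sobolev space treated in Theorem~\ref{thm:WSBes-ptau}.

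First I would check that the hypotheses of Theorem~\ref{thm:WSBes-ptau} are met. By assumption $\gamma + 2 \in (0,\infty)$, and whenever the admissible interval for $\alpha$ in \eqref{eq:alpha} is non-empty we automatically have $\nu \cdot d/(d-1) > 0$, i.e.\ $\nu > 0$. Moreover $p \in [2,\infty)$ is already assumed. Fixing any $\alpha$ in the admissible range and the associated $\tau$ with $1/\tau = \alpha/d + 1/p$, Theorem~\ref{thm:WSBes-ptau} supplies a constant $N = N(\domain,p,\alpha,\gamma,\theta)$ such that
\begin{equation*}
\nnrm{v}{B^\alpha_{\tau,\tau}(\domain)} \leq N\, \nnrm{v}{H^{\gamma+2}_{p,d-\nu p}(\domain)} \qquad \text{for all } v \in H^{\gamma+2}_{p,d-\nu p}(\domain).
\end{equation*}

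Next I would apply this inequality for almost every $(\omega,t)\in\Omega\times[0,T]$ to $v:=u(\omega,t,\cdot)$, raise to the $q$-th power, and integrate with respect to $\bP\otimes dt$. This directly produces
\begin{equation*}
\E\sgeklam{\int_0^T \nnrm{u(t,\cdot)}{B^\alpha_{\tau,\tau}(\domain)}^q \, dt}
\leq N^q\, \E\sgeklam{\int_0^T \nnrm{u(t,\cdot)}{H^{\gamma+2}_{p,\theta-p}(\domain)}^q \, dt}
= N^q\, \nnrm{u}{\bH^{\gamma+2,q}_{p,\theta-p}(\domain,T)}^q,
\end{equation*}
which is the desired a priori estimate and in particular shows that $u \in L_q(\Omega_T; B^\alpha_{\tau,\tau}(\domain))$.

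The only delicate point to address is (strong) measurability of $u$ viewed as a map into $B^\alpha_{\tau,\tau}(\domain)$, which is necessary in order to interpret the $L_q(\Omega_T;B^\alpha_{\tau,\tau}(\domain))$-norm. This follows because $u$ is by definition strongly $\cP\otimes\cB([0,T])$-measurable as a map into $H^{\gamma+2}_{p,\theta-p}(\domain)$, and the embedding from Theorem~\ref{thm:WSBes-ptau} is a bounded (quasi-)linear operator into $B^\alpha_{\tau,\tau}(\domain)$; composing preserves strong measurability. This remains valid in the quasi-Banach regime $\tau < 1$, since bounded linearity still implies continuity in the quasi-norm topology. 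I do not expect any genuine obstacle: the substantive work has already been done in Theorem~\ref{thm:WSBes-ptau}, and this theorem is the probabilistic packaging of that deterministic embedding.
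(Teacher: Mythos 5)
Your proposal is correct and coincides with the paper's argument: the paper proves this theorem in one line as a direct consequence of Theorem~\ref{thm:WSBes-ptau}, which is exactly the pointwise application (with the substitution $\nu=1+(d-\theta)/p$, so $\theta-p=d-\nu p$) followed by $L_q$-integration that you carry out. Your additional remark on strong measurability is a sound piece of diligence that the paper leaves implicit.
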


\begin{proof}
This is a direct consequence of Theorem \ref{thm:WSBes-ptau}.
\end{proof}

Combining this assertion with the results from Section~\ref{SPDEs} we obtain the following spatial regularity results for SPDEs.

\begin{thm}\label{thm:Spatial_Besov_SPDEs}
Let $\domain$ be a bounded Lipschitz domain in $\bR^d$. 

\noindent\textup{\textbf{(i)}}
Given the setting of Theorem~\ref{thm:LpLp}, the solution $u\in\frH^{\gamma+2,p}_{p,\theta}(\domain,T)$ of Eq.\ \eqref{eq:mainEqa} fulfils \eqref{eq:alpha}. Moreover, for any $\alpha$ in \eqref{eq:alpha},
there exists a constant $N$, which does not depend on $u$, $f$, $g$ and $u_0$, such that
\begin{equation*}
\E\sgeklam{\int_0^T\nnrm{u(t,\cdot)}{B^\alpha_{\tau,\tau}(\domain)}^p \,dt}
\leq N\,
\sgrklam{
\nnrm{f}{\bH^{\gamma,p}_{p,\theta+p}(\domain,T)}^p
+
\nnrm{g}{\bH^{\gamma+1,p}_{\theta,p}(\domain,T;\ell_2)}^p
+
\nnrm{u_0}{U^{\gamma,p}_{p,\theta}(\domain)}^p
}.
\end{equation*}

\noindent\textup{\textbf{(ii)}} Given the setting of Theorem~\ref{thm:heat_LqLp}, the solution $u\in \frH^{\gamma+2,q}_{p,d}(\domain,T)$ of Eq.~\eqref{eq:heat} fulfils \eqref{eq:alpha} with $\theta=d$. Moreover, for any $\alpha$ in \eqref{eq:alpha},
there exists a constant $N$, which does not depend on $u$, $f$, and $g$, such that
\begin{equation*}
\E\sgeklam{\int_0^T\nnrm{u(t,\cdot)}{B^\alpha_{\tau,\tau}(\domain)}^q \,dt}
\leq N\,
\sgrklam{
\nnrm{f}{\bH^{0,q}_{p,d}(\domain,T)}^q
+
\nnrm{f}{\bH^{\gamma,q}_{p,d+p}(\domain,T)}^q
+
\nnrm{g}{\bH^{\gamma+1,p}_{d,p}(\domain,T;\ell_2)}^q
}.
\end{equation*}

\noindent\textup{\textbf{(iii)}} Given the setting of Theorem~\ref{thm:LpLq}, the solution $u\in \frH^{\gamma+2,q}_{p,d}(\domain,T)$ of Eq.~\eqref{eq:mainEqa2} fulfils \eqref{eq:alpha} with $\theta=d$. Moreover, for any $\alpha$ in \eqref{eq:alpha}, 
there exists a constant $N$, which does not depend on $u$, $f$, and $g$, such that
\begin{equation*}
\begin{aligned}
\E\sgeklam{\int_0^T\nnrm{u(t,\cdot)}{B^\alpha_{\tau,\tau}(\domain)}^q \,dt}
\leq N\, \sgrklam{
\|f&\|_{\bH^{\gamma,q}_{p,d+p}(\domain,T)}^q
+
\|f\|_{\bH^{0,q}_{p,d}(\domain,T)}^q	 \\
&+
\|g\|_{\bH^{\gamma+1,q}_{p,d}(\domain,T;\ell_2)}^q
+
\|g\|_{\bH^{1,q}_{p,d-p}(\domain,T;\ell_2)}^q
}.
\end{aligned}
\end{equation*}
\end{thm}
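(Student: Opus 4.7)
The proof is a direct combination of the spatial Besov embedding encoded in Theorem~\ref{thm:BesSpatial} with the a-priori estimates coming from the solvability theorems of Section~\ref{SPDEs}. There is essentially no genuine obstacle here; the whole argument is a chaining of already-established results, and the main point is just to verify that the parameters match in each of the three cases.

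First, I would observe that in each of the three settings the solution $u$ belongs to $\frH^{\gamma+2,q}_{p,\theta}(\domain,T)$ for the appropriate choice of $(p,q,\theta)$, namely $(p,p,\theta)$ in part (i) with $\theta$ as specified by Theorem~\ref{thm:LpLp}, and $(p,q,d)$ in parts (ii) and (iii) by Theorems~\ref{thm:heat_LqLp} and~\ref{thm:LpLq}, respectively. By the very definition of the stochastic parabolic spaces, this yields the embedding $u\in\bH^{\gamma+2,q}_{p,\theta-p}(\domain,T)$ with the corresponding norm control
\[
\nnrm{u}{\bH^{\gamma+2,q}_{p,\theta-p}(\domain,T)}\leq \nnrm{u}{\frH^{\gamma+2,q}_{p,\theta}(\domain,T)}.
\]
Hence all hypotheses of Theorem~\ref{thm:BesSpatial} are met, and for every $\alpha$ in the range \eqref{eq:alpha} (where $\theta$ is taken to be $d$ in parts (ii) and (iii)), one obtains
\[
\E\sgeklam{\int_0^T\nnrm{u(t,\cdot)}{B^\alpha_{\tau,\tau}(\domain)}^q\, dt}\leq N\,\nnrm{u}{\bH^{\gamma+2,q}_{p,\theta-p}(\domain,T)}^q\leq N\,\nnrm{u}{\frH^{\gamma+2,q}_{p,\theta}(\domain,T)}^q,
\]
with constants depending only on the admissible parameters but not on $u$.

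Finally, to reach the right-hand sides announced in each of the three statements, I would invoke the corresponding a-priori estimates for the $\frH^{\gamma+2,q}_{p,\theta}(\domain,T)$-norm of the solution: estimate \eqref{eq:LpLpEst} from Theorem~\ref{thm:LpLp} for part (i), estimate \eqref{eq:heat_ap} from Theorem~\ref{thm:heat_LqLp} for part (ii), and estimate \eqref{eq:LpLqEst} from Theorem~\ref{thm:LpLq} for part (iii). Plugging these into the previous display yields the three claimed inequalities. The only step that requires a bit of care is the bookkeeping of the weight parameter, specifically the identification $\theta-p=d-\nu p$ with $\nu=1+(d-\theta)/p$ which makes the range of $\alpha$ in Theorem~\ref{thm:BesSpatial} coincide with the range produced by applying Theorem~\ref{thm:WSBes-ptau} to $H^{\gamma+2}_{p,\theta-p}(\domain)$; this is however immediate from the definitions. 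Hence no nontrivial step remains.
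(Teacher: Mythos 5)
Your proposal is correct and follows exactly the paper's own (one-line) argument: apply Theorem~\ref{thm:BesSpatial} to $u\in\bH^{\gamma+2,q}_{p,\theta-p}(\domain,T)$ and then insert the a-priori estimates \eqref{eq:LpLpEst}, \eqref{eq:heat_ap} and \eqref{eq:LpLqEst} from the corresponding solvability theorems of Section~\ref{SPDEs}. The parameter bookkeeping you carry out is the only content of the proof, and it checks out.
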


\begin{proof}
The assertions are immediate consequences of Theorem~\ref{thm:BesSpatial} and the corresponding existence results from Section~\ref{SPDEs}.
\end{proof}

\begin{remark}\label{rem:rel_LpLp}
A result similar to Theorem~\ref{thm:Spatial_Besov_SPDEs}(i) has been proved in \cite[Theorem 3.1, see also Theorem~B.3]{CioDahKin+2011}. 
There are three major improvements in Theorem~\ref{thm:Spatial_Besov_SPDEs}(i) compared to \cite[Theorem~3.1]{CioDahKin+2011}. 
Firstly, we have no restriction on $\gamma\in (0,\infty)$, whereas in \cite{CioDahKin+2011} only integer $\gamma\in\bN_0$ are considered. Secondly, we obtain $L_p$-integrability in time of the $B^\alpha_{\tau,\tau}(\domain)$-valued process for arbitrary $p\in[2,\infty)$. With the techniques used in \cite{CioDahKin+2011} just $L_\tau$-integrability in time can be established.  Thirdly, we do not need the extra assumption $u\in L_p([0,T]\times\Omega; B^s_{p,p}(\domain))$ for some $s>0$. It suffices that $u\in\bH^{\gamma+2,p}_{p,\theta-p}(\domain,T)$.
Note that, obviously, this improvements also hold for the solutions of more general equations as considered in \cite[Theorem~B.3]{CioDahKin+2011}.
\end{remark}

Here is the main result of this section. It concerns the H\"older-Besov regularity of processes in $\frH^{\gamma,q}_{p,\theta}(\domain,T)$.

\begin{thm}\label{thm:HoeBes}
Let $\domain$ be a bounded Lipschitz domain in $\bR^d$. Let $2\leq p\leq q <\infty$, $\gamma+2\in\bN$, $\theta\in\bR$, and $u\in\frH^{\gamma+2,q}_{p,\theta}(\domain,T)$. Moreover, let
\begin{align*}
\frac{2}{q}< \bar{\beta} <\min\sggklam{1,1+\frac{d-\theta}{p}}.
\end{align*}
Then, for all $\alpha$ and $\tau$ with
\begin{equation}\label{eq:alphatau}
\frac{1}{\tau}=\frac{\alpha}{d}+\frac{1}{p} \quad\text{and}\quad 0<\alpha<\min\sggklam{\gamma+2-\bar{\beta}, \sgrklam{1+\frac{d-\theta}{p}-\bar{\beta}}\frac{d}{d-1}},
\end{equation}
we have
\begin{align*}
\E\geklam{u}^q_{\mathcal{C}^{\bar{\beta}/2-1/q}([0,T];B^\alpha_{\tau,\tau}(\domain))}
\leq
N(T)
\sgrklam{\|u\|^q_{\bH^{\gamma+2,q}_{p,\theta-p}(\cO,T)}+\|\bD u\|^q_{\bH^{\gamma,q}_{p,\theta+p}(\cO,T)}+\|\bS u\|^q_{\bH^{\gamma+1,q}_{p,\theta}(\cO,T;\ell_2)}},
\end{align*}
and
\begin{align*}
\E\|u\|^q_{\mathcal{C}^{\bar{\beta}/2-1/q}([0,T];B^\alpha_{\tau,\tau}(\domain))}
\leq N(T)\,
\nnrm{u}{\frH^{\gamma+2,q}_{p,\theta}(\domain,T)}^q.
\end{align*}
The constants $N(T)$ are given by $N(T)=N \sup_{\beta\in[\bar{\beta},1]}\ggklam{T^{(\beta-\bar{\beta})q/2}}$, with $N$ from \eqref{eq:mH01} and \eqref{eq:mH02} respectively.
\end{thm}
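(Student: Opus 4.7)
The strategy is to combine the temporal H\"older regularity valued in weighted Sobolev spaces furnished by Theorem~\ref{thm:mainHoelder} with the spatial embedding from the weighted Sobolev scale into the Besov scale \eqref{NAscale} provided by Theorem~\ref{thm:WSBes-ptau}, bridged by Lemma~\ref{lem:collection}(iv), which absorbs the factor $\psi^{\beta-1}$ into a shift of the weight parameter. Given $\alpha$ and $\tau$ as in \eqref{eq:alphatau}, I would first exploit the continuity in $\beta$ of the two quantities that bound $\alpha$, together with the standing hypothesis $\bar\beta<\min\{1,1+(d-\theta)/p\}$, to pick $\beta\in(\bar\beta,\min\{1,1+(d-\theta)/p\})$ so close to $\bar\beta$ that
\begin{equation*}
0<\alpha<\min\sggklam{(\gamma+2)-\beta,\;\nu\tfrac{d}{d-1}},\qquad \nu:=1+\tfrac{d-\theta}{p}-\beta>0,
\end{equation*}
with the crucial by-product $\theta+p(\beta-1)=d-\nu p$.

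Next, I would apply Theorem~\ref{thm:mainHoelder} with its $\gamma$ replaced by $\gamma+2\in\bN$: this bounds $\E[\psi^{\beta-1}u]^q_{C^{\bar\beta/2-1/q}([0,T];H^{\gamma+2-\beta}_{p,\theta}(\domain))}$ (and, for the full norm, its analogue \eqref{eq:mH02}) by $NT^{(\beta-\bar\beta)q/2}$ times the three Sobolev norms appearing on the right-hand side of \eqref{eq:mH01}. Lemma~\ref{lem:collection}(iv), applied pointwise to each increment $u(t)-u(s)$, translates this into
\begin{equation*}
\nnrm{\psi^{\beta-1}(u(t)-u(s))}{H^{\gamma+2-\beta}_{p,\theta}(\domain)}\sim\nnrm{u(t)-u(s)}{H^{\gamma+2-\beta}_{p,d-\nu p}(\domain)},
\end{equation*}
and Theorem~\ref{thm:WSBes-ptau} then supplies the continuous embedding $H^{\gamma+2-\beta}_{p,d-\nu p}(\domain)\hookrightarrow B^\alpha_{\tau,\tau}(\domain)$, since $\nu>0$ and $\alpha$ lies strictly below the embedding threshold associated with this $\beta$. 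Composition yields the first displayed H\"older--Besov seminorm estimate, with the constant $N(T)$ of the claimed form.

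The full-norm bound follows in the same way from \eqref{eq:mH02}, provided one dominates the extra initial-value term $\E\nnrm{\psi^{\beta-1}u(0)}{H^{\gamma+2-\beta}_{p,\theta}(\domain)}^q$ by $\nnrm{u(0)}{U^{\gamma+2,q}_{p,\theta}(\domain)}^q$. Using Lemma~\ref{lem:collection}(iv) to rewrite both norms, this reduces to the embedding $H^{\gamma+2-2/q}_{p,\theta-p(1-2/q)}(\domain)\hookrightarrow H^{\gamma+2-\beta}_{p,\theta+p(\beta-1)}(\domain)$, which holds on the bounded domain $\domain$ because the assumption $\beta>2/q$ makes the target space both less smooth ($\gamma+2-\beta<\gamma+2-2/q$) and more heavily weighted ($\theta+p(\beta-1)>\theta-p(1-2/q)$).

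The main obstacle is purely parametric bookkeeping: verifying that the interval of admissible $\beta$ is nonempty, that the chosen $\beta$ keeps $\nu>0$ so Theorem~\ref{thm:WSBes-ptau} is applicable, and that the original upper bounds on $\alpha$ are preserved up to an arbitrarily small loss as $\beta\downarrow\bar\beta$. Once these constraints are simultaneously arranged, the two main ingredients of the paper combine transparently and no further estimates are required.
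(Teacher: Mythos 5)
Your proposal is correct and follows exactly the route the paper takes: its proof consists of the single sentence that the result is an immediate consequence of Theorem~\ref{thm:mainHoelder} and Theorem~\ref{thm:WSBes-ptau}, and your argument simply makes explicit the parameter bookkeeping (choice of $\beta$ close to $\bar\beta$, the identity $\theta+p(\beta-1)=d-\nu p$ via Lemma~\ref{lem:collection}(iv), and the absorption of the initial-value term) that the authors leave implicit. Nothing in your write-up deviates from or adds to the paper's intended argument beyond this welcome level of detail.
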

\begin{proof}
The assertion is an immediate consequence of Theorem~\ref{thm:mainHoelder}
and Theorem~\ref{thm:WSBes-ptau}.
\end{proof} 

We obtain the following implications concerning the path regularity of the solutions of SPDEs from Section~\ref{SPDEs}.

\begin{thm}\label{thm:HoelderBesov_SPDEs}
Let $\domain$ be a bounded Lipschitz domain in $\bR^d$.

\noindent\textup{\textbf{(i)}} Given the setting of Theorem~\ref{thm:heat_LqLp} with $\gamma\in\bN_0$, let $u\in\frH^{\gamma+2,q}_{p,d}(\domain,T)$ be the solution of Eq.~\eqref{eq:heat}. Assume furthermore that
$
2/q < \bar{\beta}<1
$, and that $\alpha$ and $\tau$ fulfil \eqref{eq:alphatau}. Then,
\begin{align*}
\E\|u\|^q_{\mathcal{C}^{\bar{\beta}/2-1/q}([0,T];B^\alpha_{\tau,\tau}(\domain))}
\leq
N\,
\sgrklam{
\|f\|^q_{\bH^{\gamma,q}_{p,d+p}(\cO,T)}
+
\|f\|^q_{\bH^{0,q}_{p,d}(\cO,T)}
+
\|g\|^q_{\bH^{\gamma+1,q}_{p,d}(\cO,T;\ell_2)}},
\end{align*}
where the constant $N$ does not depend on $u$, $f$ and $g$.

\noindent\textup{\textbf{(ii)}} Given the setting of Theorem~\ref{thm:LpLq} with $\gamma\in\bN_0$, let 
$u\in\frH^{\gamma+2,q}_{p,d}(\domain,T)$ be the solution of Eq.~\eqref{eq:mainEqa2}. Assume furthermore that
$
2/q < \bar{\beta}<1
$, and that $\alpha$ and $\tau$ fulfil \eqref{eq:alphatau}. Then,
\begin{equation*}
\begin{aligned}
\E\|u\|^q_{\mathcal{C}^{\bar{\beta}/2-1/q}([0,T];B^\alpha_{\tau,\tau}(\domain))}
\leq N\, \sgrklam{
\|f&\|_{\bH^{\gamma,q}_{p,d+p}(\domain,T)}^q
+
\|f\|_{\bH^{0,q}_{p,d}(\domain,T)}^q	 \\
&+
\|g\|_{\bH^{\gamma+1,q}_{p,d}(\domain,T;\ell_2)}^q
+
\|g\|_{\bH^{1,q}_{p,d-p}(\domain,T;\ell_2)}^q
},
\end{aligned}
\end{equation*}
where the constant $N$ does not depend on $u$, $f$ and $g$.
\end{thm}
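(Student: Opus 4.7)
The plan is to combine Theorem~\ref{thm:HoeBes} (the H\"older--Besov embedding for elements of the stochastic parabolic spaces) with the a~priori estimates from Section~\ref{SPDEs}. Since $\gamma \in \bN_0$ and $\theta = d$ in both settings, the hypothesis of Theorem~\ref{thm:HoeBes} specializes correctly: the condition $2/q < \bar{\beta} < \min\{1, 1+(d-\theta)/p\}$ becomes $2/q < \bar{\beta} < 1$, and the admissible range of $(\alpha,\tau)$ in Theorem~\ref{thm:HoeBes} with $\theta = d$ is precisely \eqref{eq:alphatau}. Therefore Theorem~\ref{thm:HoeBes} yields
\begin{equation*}
\E\|u\|^q_{C^{\bar{\beta}/2-1/q}([0,T];B^\alpha_{\tau,\tau}(\domain))}
\leq N(T)\, \|u\|^q_{\frH^{\gamma+2,q}_{p,d}(\domain,T)},
\end{equation*}
and the remaining task is to bound the full $\frH$-norm by the norms of $f$ and $g$.

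For part (ii) this is immediate: Theorem~\ref{thm:LpLq} provides the a~priori estimate \eqref{eq:LpLqEst} for $\|u\|_{\frH^{\gamma+2,q}_{p,d}(\domain,T)}$ with exactly the right-hand side claimed in (ii). For part (i), I would instead invoke Theorem~\ref{thm:heat_LqLp}, which only provides a bound on $\|u\|_{\bH^{\gamma+2,q}_{p,d-p}(\domain,T)}$, and then recover the remaining two summands of the $\frH$-norm using the equation. Concretely, since $u$ solves Eq.~\eqref{eq:heat} with vanishing initial condition, I have $\bD u = \Delta u + f$ and $\bS u = g$, while $u(0,\cdot)=0$, so that $\|\bS u\|_{\bH^{\gamma+1,q}_{p,d}(\domain,T;\ell_2)}=\|g\|_{\bH^{\gamma+1,q}_{p,d}(\domain,T;\ell_2)}$ and, by Lemma~\ref{lem:collection}(iii)--(iv) (equivalently by the observation in Remark~\ref{rem:NoMiddle}),
\begin{equation*}
\|\bD u\|_{\bH^{\gamma,q}_{p,d+p}(\domain,T)}
\leq N\bigl(\|u\|_{\bH^{\gamma+2,q}_{p,d-p}(\domain,T)} + \|f\|_{\bH^{\gamma,q}_{p,d+p}(\domain,T)}\bigr).
\end{equation*}
Combining this with \eqref{eq:heat_ap} completes (i).

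No genuine obstacle is anticipated, since all the essential analytic work has already been done: the temporal H\"older estimate on bounded Lipschitz domains (Theorem~\ref{thm:mainHoelder}) is folded into Theorem~\ref{thm:HoeBes} via the wavelet-based embedding of weighted Sobolev spaces into the Besov scale \eqref{NAscale} (Theorem~\ref{thm:WSBes-ptau}), and the solvability plus a~priori estimates in $\frH^{\gamma+2,q}_{p,d}(\domain,T)$ are furnished by Section~\ref{SPDEs}. The only non-mechanical point is the verification, in case (i), that one can convert the bound on the $\bH^{\gamma+2,q}_{p,d-p}(\domain,T)$-norm provided by Theorem~\ref{thm:heat_LqLp} into a bound on the full $\frH^{\gamma+2,q}_{p,d}(\domain,T)$-norm; this follows cleanly from the preceding display since both extra summands are already controlled by the right-hand side of \eqref{eq:heat_ap}.
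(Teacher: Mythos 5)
Your proposal is correct and follows essentially the same route as the paper, which simply declares the assertions to be immediate consequences of Theorem~\ref{thm:HoeBes} and the existence results of Section~\ref{SPDEs}. Your extra step for part (i) — converting the bound on $\|u\|_{\bH^{\gamma+2,q}_{p,d-p}(\domain,T)}$ from \eqref{eq:heat_ap} into a bound on the full $\frH^{\gamma+2,q}_{p,d}(\domain,T)$-norm via $\bD u=\Delta u+f$, $\bS u=g$, $u(0,\cdot)=0$ and Lemma~\ref{lem:collection}(iii)--(iv) — is exactly the detail the paper leaves implicit, and it is carried out correctly.
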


\begin{proof}
The assertions are immediate consequences of Theorem~\ref{thm:HoeBes} and the corresponding existence results from Section~\ref{SPDEs}.
\end{proof}

\setcounter{equation}{0}
\begin{appendix}

\section{Appendix}

\begin{proof}[Proof of Lemma \ref{lem:Transf_rule}]
We consider consecutively the cases $\gamma=0,1,-1$. For fractional $\gamma\in(-1,1)$, the statement follows then by using interpolation arguments, see \cite[Proposition 2.4]{Lot2000}.  Furthermore, we resume ourselves to the proof of the right inequality in the assertion of the Lemma, i.e., that there exists a constant $N=N(d,\gamma,p,\theta,\phi)\in (0,\infty)$, such that for any $u\in H^{\gamma}_{p,\theta}(\gdomain^{(1)})$ the following inequality holds:
$$\nnrm{u\circ\phi^{-1}}{H^\gamma_{p,\theta}(\gdomain^{(2)})}\leq N\nnrm{u}{H^\gamma_{p,\theta}(\gdomain^{(1)})}.$$
The left inequality can be proven analogously.
For $\gamma=0$, the assertion follows immediately by using the assumptions of the Lemma and the change of variables formula for bi-Lipschitz transformations, see, e.g., \cite[Theorem 3]{Haj1993}. Let us go on and look at the case $\gamma=1$. Because of the density of the test functions $C^\infty_0(\gdomain^{(1)})$ in $H^{1}_{p,\theta}(\gdomain^{(1)})$, it suffices to prove the asserted inequality for $u\in C^\infty_0(\gdomain^{(1)})$. In this case, because of the assumed Lipschitz-continuity of $\phi^{-1}$, the classical partial derivatives of $u\circ \phi^{-1}$ exist a.e.\ and
\begin{align*}
\Big|\frac{\partial}{\partial x^j}\rklam{u\circ\phi^{-1}}\Big|
=
\Big|\sum_{i=1}^{d} \rklam{\frac{\partial}{\partial x^i}u}\circ \phi^{-1}
\frac{\partial}{\partial x^j}\phi^{-1}_i\Big|
\leq N \sum_{i=1}^d \Big|\rklam{\frac{\partial}{\partial x^i}u}\circ \phi^{-1}\Big|\qquad \text{(a.e.)}.
\end{align*}
Thus, using e.g.\ \cite[Section 1.1.3, Theorem 2]{Maz2011} we can conclude that these a.e.\ existing classical derivatives coincide with the weak derivatives, so that
\begin{align*}
\nnrm{u\circ &\phi^{-1}}{H^1_{p,\theta}(\gdomain^{(2)})}^p\\
&\leq N \sgrklam{ \int_{\gdomain^{(2)}} | \rklam{u\circ\phi^{-1}}(y) |^p \rho_{\gdomain^{(2)}}(y)^{\theta-d} \, dy
+ \sum_{j=1}^{d} \int_{\gdomain^{(2)}}\Big| \frac{\partial}{\partial x^j}\rklam{u\circ\phi^{-1}}(y)\Big|^p \rho_{\gdomain^{(2)}}(y)^{p+\theta-d} dy}\\
&\leq N \sgrklam{\int_{\gdomain^{(2)}} | \rklam{u\circ\phi^{-1}}(y) |^p \rho_{\gdomain^{(2)}}(y)^{\theta-d} \, dy
+ \int_{\gdomain^{(2)}}\sum_{i=1}^{d} \Big| \rklam{\frac{\partial}{\partial x^i}u}(\phi^{-1}(y))
\Big|^p \rho_{\gdomain^{(2)}}(y)^{p+\theta-d} dy}\\
&\leq N \sgrklam{\int_{\gdomain^{(1)}} | u(x) |^p \rho_{\gdomain^{(1)}}(x)^{\theta-d} \, dx
+  \sum_{i=1}^{d} \int_{\gdomain^{(1)}} \Big|\frac{\partial}{\partial x^i}u(x) \Big|^p \rho_{\gdomain^{(1)}}(x)^{p+\theta-d} dx}\\
&\leq N \nnrm{u}{H^1_{p,\theta}(\gdomain^{(1)})}^p.
\end{align*}
Finally, for $\gamma=-1$, we can use the fact that $H^1_{p',\theta'}(\gdomain)$ is the dual space of $H^{-1}_{p,\theta}(\gdomain)$, if $1/p+1/p'=1$ and $\theta/p+\theta'/p'=d$, see \cite[Proposition 2.4]{Lot2000}, and fall back to the cases we have already proven.
\end{proof}

\begin{proof}[Proof of Lemma \ref{pullbackEqn}]
We set $f:=\bD u$ and $\;g:=\bS u$. Since $u\in\frH^{1,q}_{p,\theta}(\gdomain^{(1)},T)$,  Lemma \ref{lem:Transf_rule} guarantees that 
$f\circ\phi^{-1}\in \bH^{-1,q}_{p,\theta+p}(\gdomain^{(2)},T)$, 
$g\circ\phi^{-1}\in \bH^{0,q}_{p,\theta}(\gdomain^{(2)},T;\ell_2)$ 
and 
$u(0,\cdot)\circ\phi^{-1}\in U^{1,q}_{p,\theta}(\gdomain^{(2)})$.
Therefore, we only have to show that for all $\varphi\in C^\infty_0(\gdomain^{(2)})$, 
with probability one, the equality
\begin{equation}\label{eq:pullbackEqn}
\big(u(t,\cdot),\varphi\circ\phi\big)=\big(u(0,\cdot),\varphi\circ\phi\big) + \int^{t}_{0}\big(f(s,\cdot),\varphi\circ\phi\big) \, ds
+\sum^{\infty}_{k=1} \int^{t}_{0}\big(g^k(s,\cdot),\varphi\circ\phi\big)\, dw^k_s
\end{equation}
holds for all $t\in[0,T]$. 
Thus, let us fix $\varphi\in C^\infty_0(\gdomain^{(2)})$. We consider first the case $p>2$.
By Lemma \ref{lem:Transf_rule}, $\varphi\circ\phi\in H^1_{\bar{p},\bar{\theta}-\bar{p}}(\gdomain^{(1)})$ for any $\bar{p}\in(1,\infty)$ and $\bar{\theta}\in\bR$, hence also for 
$\bar{p}:=2p/(p-2)$ and $\bar{\theta}:=2\theta'(p-1)/(p-2)-dp/(p-2)$,
where $\theta/p+\theta'/p'=d$ with $1/p+1/p'=1$. Moreover, by Lemma \ref{lem:collection}(i) we can choose a sequence $\bar{\varphi}_n\subseteq C^\infty_0(\gdomain^{(1)})$ approximating $\varphi\circ\phi$ in $H^1_{\bar{p},\bar{\theta}-\bar{p}}(\gdomain^{(1)})$. Furthermore, a consequence of our assumptions is, that for all $n\in\bN$, with probability one, the equality
\begin{equation}\label{eq:forwardEqn}
\big(u(t,\cdot),\bar{\varphi}_n\big)=\big(u(0,\cdot),\bar{\varphi}_n\big) + \int^{t}_{0}\big(f(s,\cdot),\bar{\varphi}_n\big) \, ds
+\sum^{\infty}_{k=1} \int^{t}_{0}\big(g^k(s,\cdot),\bar{\varphi}_n\big)\, dw^k_s
\end{equation}
holds for all $t\in[0,T]$. Thus, if we can show that each side of \eqref{eq:forwardEqn} converges in $L_2(\Omega;C([0,T]))$ to the respective side of \eqref{eq:pullbackEqn}, the assertion follows. To this end, let us fix an arbitrary $\bar{v}\in H^1_{\bar{p},\bar{\theta}-\bar{p}}(\gdomain^{(1)})$. A standard estimate yields
\begin{equation*}
\E\sgeklam{\sup_{t\in[0,T]}\Big|\int_0^t\big(f(s,\cdot),\bar{v}\big)\,ds\Big|^2}
\leq
N 
\nnrm{f}{\bH^{-1,q}_{p,\theta+p}(\gdomain^{(1)},T)}^2
\nnrm{\bar{v}}{H^1_{p',\theta'-p'}(\gdomain^{(1)})}^2.
\end{equation*}
Also, using Doob's inequality, It{\^o}'s isometry and H{\"o}lder's inequality we get
\begin{equation}\label{eq:pbEqnSt}
\E\sgeklam{\sup_{t\in[0,T]}\Big|\sum_{k\in\bN}\int_0^t\big(g^k(s,\cdot),\bar{v}\big)\,dw^k_s\Big|^2}
\leq 
N
\nnrm{g}{\bH^{0,q}_{p,\theta}(\gdomain^{(1)},T;\ell_2)}^2
\nnrm{\bar{v}}{L_{\bar{p},\bar{\theta}}(\gdomain^{(1)})}^2.
\end{equation}
Furthermore, an application of \cite[Proposition 2.9]{Kim2011} and the fact that $q\geq p$ lead to
\begin{align*}
\E\sgeklam{\sup_{t\in[0,T]}\big|\big(u(t,\cdot),\bar{v}\big)\big|^2}
&\leq
\sgrklam{\E\sgeklam{\sup_{t\in[0,T]}\nnrm{u(t,\cdot)}{L_{p,\theta}(\gdomain^{(1)})}^p}}^{2/p}
\nnrm{\bar{v}}{L_{p',\theta'}(\gdomain^{(1)})}^2\\
&\leq
N
\nnrm{u}{\frH^{1,p}_{p,\theta}(\gdomain^{(1)},T)}^2
\nnrm{\bar{v}}{L_{p',\theta'}(\gdomain^{(1)})}^2\\
&\leq 
N
\nnrm{u}{\frH^{1,q}_{p,\theta}(\gdomain^{(1)},T)}^2
\nnrm{\bar{v}}{L_{p',\theta'}(\gdomain^{(1)})}^2.
\end{align*}
Hence, since $H^1_{\bar{p},\bar{\theta}-\bar{p}}(\gdomain^{(1)})$ is continuously embedded in 
$H^1_{p',\theta'-p'}(\gdomain^{(1)}) \cap L_{p',\theta'}(\gdomain^{(1)}) \cap L_{\bar{p},\bar{\theta}}(\gdomain^{(1)})$, the assertion follows for the case $p>2$. The same arguments can be used to prove the case $p=2$: Just replace $\bar{p}$ by $2$, and $\bar{\theta}$ by $\theta'=2d-\theta$ and use the estimate
\begin{equation*}
\E\sgeklam{\sup_{t\in[0,T]}\Big|\sum_{k\in\bN}\int_0^t\big(g^k(s,\cdot),\bar{v}\big)\,dw^k_s\Big|^2}
\leq 
N
\nnrm{g}{\bH^{0,q}_{2,\theta}(\gdomain^{(1)},T;\ell_2)}^2
\nnrm{\bar{v}}{L_{2,\theta'}(\gdomain^{(1)})}^2
\end{equation*}
instead of \eqref{eq:pbEqnSt}.
\end{proof}

\begin{proof}[Proof of Lemma \ref{lem:fancy}]
Let us fix $j\in\bN_0$. 
We use the notations introduced before Lemma \ref{lem:fancy}. Remember that $\domainc>0$ has been chosen in such a way that $\domain\subseteq[-\domainc,\domainc]^d$. Let us fix $k_1\geq 1$ such that
\begin{equation}\label{eq:fancy-k_1}
2+2\domainc \sqrt{d} < 2^{k_1},
\end{equation} 
and construct a sequence $\{\xi_n:n\in\bZ\}\subseteq C^\infty_0(\domain)$ as in Remark~\ref{rem:EquivWS}(ii).
In order to prove the assertion we are going to show the  estimates
\begin{equation}\label{eq:fancy-A}
\sum_{(\ijk)\in\Lambda^0_j}\sgrklam{\rho_{\jk}^{\gamma-\nu}|u|_{B_{p,p}^\gamma(Q_\jk^\circ)}}^p
\leq N
\sum_{n\in\bN_0}2^{-(j-n)(\gamma-\nu)p}|\xi_{j-n}u|_{B^\gamma_{p,p}(\bR^d)}^p,
\end{equation}
and
\begin{equation}\label{eq:fancy-B}
|\xi_{j-n}u|_{B^\gamma_{p,p}(\bR^d)}^p
\leq N\,
2^{-(j-n)(d-\gamma p)}\gnnrm{\xi_{j-n}\big(2^{-(j-n)}\cdot\big)u\big(2^{-(j-n)}\cdot\big)}{H^\gamma_p(\bR^d)}^p,
\end{equation}
where the constant $N$ does not depend on $j$ and $n$.
This will prove the assertion since, assuming that \eqref{eq:fancy-A} and \eqref{eq:fancy-B} are true, their combination gives
\begin{align*}
\sum_{(\ijk)\in\Lambda^0_j}\big(\rho_{\jk}^{\gamma-\nu} |u|_{B_{p,p}^\gamma(Q_\jk^\circ)}\big)^p
&\leq N
\sum_{n\in\bN_0}2^{-(j-n)(d-\nu p)}\gnnrm{\xi_{j-n}\big(2^{-(j-n)}\cdot\big)u\big(2^{-(j-n)}\cdot\big)}{H^\gamma_p(\bR^d)}^p\\
&\leq N 
\sum_{n\in\bZ}2^{n(d-\nu p)}\gnnrm{\xi_{-n}\big(2^n\cdot\big)u\big(2^n\cdot\big)}{H^\gamma_p(\bR^d)}^p\\
&\leq N\,
\nnrm{u}{H^\gamma_{p,d-\nu p}(\domain)}^p.
\end{align*}
In the last step we used Remark \ref{rem:EquivWS}(ii) and Lemma \ref{lem:EquivWS}.

Let us first verify inequality \eqref{eq:fancy-B}. To this end, let $r$ be the smallest integer strictly greater than $\gamma$. For the sake of clarity we use here the notation $\Delta_h^r[f]$ for the $r$-th difference of a function $f:\bR^d\to\bR$ with step $h\in\bR^d$, whereas $\Delta_h^r[f](x)$ denotes the value of this $r$-th difference at a point $x\in\bR^d$, compare Subsection~\ref{Besov}. Writing out the Besov semi-norm and applying the transformation formula for integrals we see that
\begin{align*}
|\xi_{j-n}u|_{B^\gamma_{p,p}(\bR^d)}^p
&= 
\int_0^\infty t^{-\gamma p}
\sup_{|h|<t}\gnnrm{\Delta^r_h[\xi_{j-n}u]}{L_p(\bR^d)}^p\,\frac{dt}{t}\\
&= 
2^{-(j-n)d}
\int_0^\infty t^{-\gamma p}
\sup_{|h|<t}\sggklam{\int_{\bR^d}\big|\Delta^r_h[\xi_{j-n}u]\big(2^{-(j-n)}x\big)\big|^p\,dx}\,\frac{dt}t\\
&= 
2^{-(j-n)d}
\int_0^\infty t^{-\gamma p}
\sup_{|h|<2^{j-n}t}\sggklam{\int_{\bR^d}\Big|\Delta^r_h\Big[\xi_{j-n}\big(2^{-(j-n)}\cdot\big)u\big(2^{-(j-n)}\cdot\big)\Big](x)\Big|^p\,dx}\,\frac{dt}t.
\end{align*}
A further application of the transformation formula for integrals yields
\begin{align*}
|\xi_{j-n}u|_{B^\gamma_{p,p}(\bR^d)}^p
&= 
2^{-(j-n)d}2^{(j-n)\gamma p}
\int_0^\infty t^{-\gamma p}\sup_{|h|<t}\sgnnrm{\Delta^r_h\Big[\xi_{j-n}\big(2^{-(j-n)}\,\cdot\,\big)u\big(2^{-(j-n)}\,\cdot\,\big)\Big]}{L_p(\bR^d)}^p\,\frac{dt}t\\
&= 
2^{-(j-n)(d-\gamma p)}
\big|\xi_{j-n}\big(2^{-(j-n)}\,\cdot\,\big)u\big(2^{-(j-n)}\,\cdot\,\big)\big|_{B^\gamma_{p,p}(\bR^d)}^p,
\end{align*}
which implies \eqref{eq:fancy-B} since the space $H^\gamma_p(\bR^d)$ of Bessel potentials is continuously embedded in the Besov space $B^\gamma_{p,p}(\bR^d)$, see \cite[Theorem 2.3.2(d) combined with Theorem 2.3.3(a)]{Tri1995}.

It remains to  prove inequality \eqref{eq:fancy-A}. Recall that the index $i$ referring to the different types of wavelets on a cube $Q_\jk$ ranges from $1$ to $2^d-1$. Since $\Lambda_j^0$ consists of those indices $(\ijk)\in\Lambda_j$ with $2^{-j}\leq\rho_{\jk}$, we have
\begin{equation}\label{eq:fancy-A1}
\sum_{(\ijk)\in\Lambda^0_j}\big(\rho_{\jk}^{\gamma-s}|u|_{B_{p,p}^\gamma(Q_\jk)}\big)^p=(2^d-1)\sum_{k\in \Lambda_j^\star}\big(\rho_{\jk}^{\gamma-s}|u|_{B_{p,p}^\gamma(Q_\jk)}\big)^p,
\end{equation}
where we used the notation
\begin{equation*}
\Lambda^\star_j:=\ggklam{ k\in\bZ^d \, : \, (\ijk)\in\Lambda^0_j}.
\end{equation*}
Now we get the required estimate in four steps.

\noindent\textbf{Step 1.} We first show that the cubes supporting the wavelets fit into the stripes where the cut-off functions $(\xi_n)$ are identical to one. More precisely, we claim that the proper choice of $k_1$, see \eqref{eq:fancy-k_1}, leads to the fact that, for any $k\in\Lambda_j^\star$, there exists a non-negative integer $n\in\bN_0$ such that
\begin{equation*}
Q_\jk\subseteq S_{j-n}:=\rho^{-1}\grklam{2^{-(j-n)}\geklam{2^{-k_1},2^{k_1}}}.
\end{equation*}
We denote
\begin{equation*}
S_{j,n}^\star:=\ggklam{ k\in\Lambda_j^\star \, : \, Q_\jk\in S_{j-n} },
\quad n\in\bN_0.
\end{equation*}
To prove this, we first note that, since $k_1\geq 1$ fulfils \eqref{eq:fancy-k_1}, 
\begin{equation*}
\bigcup_{k\in \Lambda_j^\star} Q_\jk \subseteq \bigcup_{n\in\bN_0} S_{j-n} = \bigcup_{n=0}^{j} S_{j-n}.
\end{equation*}
Fix $k\in\Lambda_j^\star$ and let $n^*$ be the smallest non-negative integer such that $Q_\jk\cap S_{j-n^*}\neq\emptyset$, i.e.,
\begin{equation*}
n^*:=\inf \ggklam{n\in \bN \,:\, Q_\jk\cap S_{j-n}\neq\emptyset}\leq j.
\end{equation*}
Then, there are two possibilities: On the one hand, $Q_\jk$ might be contained completely in $S_{j-n^*}$, i.e., $Q_\jk\subseteq S_{j-n^*}$. Then we are done. On the other hand, it might happen that $Q_\jk$ is not completely contained in the stripe $S_{j-n^*}$. In this case, we claim that $Q_\jk\subseteq S_{j-(n^*+1)}$, i.e.,
\begin{equation*}
\rho (x) \in \geklam{2^{-j+n^*+1} 2^{-k_1},2^{-j+n^*+1} 2^{k_1}} \text{ for all } x\in Q_\jk.
\end{equation*}
Let us therefore fix $x\in Q_\jk$. Then, since the length of the diagonal of $Q_\jk$ is $2^{-j} 2 \domainc \sqrt{d}$, we have
\begin{align*}
\rho(x)
\leq
\rho_\jk + 2^{-j} 2 \domainc \sqrt{d}.
\end{align*}
Also, $\rho_\jk \leq 2^{-j+n^*}2^{k_1}$ since $Q_\jk\cap S_{j-n^*}\neq\emptyset$. Hence,
\begin{align*}
\rho(x)
\leq
2^{-j+n^*} 2^{k_1}+ 2^{-j} 2 \domainc \sqrt{d}.
\end{align*}
Since $2 \domainc\sqrt{d} \leq 2^{k_1}$, we conclude that
\begin{align*}
\rho(x)
\leq
2^{-j+n^*+1} 2^{k_1} \rrklam{ \frac{1}{2} +\frac{2 \domainc \sqrt{d}}{2^{n^*+1} 2^{k_1} } }
\leq
2^{-j+n^*+1} 2^{k_1} \rrklam{ \frac{1}{2} + \frac{1}{2^{n^*+1} }}
\leq
2^{-j+n^*+1} 2^{k_1}.
\end{align*}
It remains to show that $\rho(x)\geq 2^{-j+n^*+1}2^{-k_1}$. We argue as follows: Since $Q_\jk$ is not completely contained in $S_{j-n^*}$, there exists a point $x_0\in Q_\jk$ such that $\rho(x_0) > 2^{-j+n^*} 2^{k_1}$. Therefore, since the length of the diagonal of $Q_\jk$ is $2^{-j}2\domainc\sqrt{d}$, we have
\begin{align*}
\rho(x) 
>
2^{-j+n^*} 2^{k_1} - 2 \domainc \sqrt{d}\, 2^{-j}
\geq
2^{-j+n^*+1} 2^{-k_1} \rrklam{\frac{2^{2 k_1}}{2} - \frac{2 \domainc \sqrt{d}\, 2^{k_1}}{2^{n^*+1}}}
\geq
2^{-j+n^*+1} 2^{-k_1}.
\end{align*}
In the last step we used \eqref{eq:fancy-k_1}.

\noindent\textbf{Step 2.} We rearrange the cubes supporting the wavelets in classes containing only cubes with disjoint interiors. 
More precisely, let $e_1,\ldots,e_d$ be the canonical orthonormal basis in $\bR^d$. Since $Q_\jk=2^{-j}\big(k+[-\domainc,\domainc]^d\big)$ it is clear that, for all  $k\in\bZ$ and $l\in\{1,\ldots,d\}$,
\begin{align*}
Q_\jk^\circ\cap\big(2^{-j}(2\domainc)e_l+Q_\jk^\circ\big)=\emptyset.
\end{align*}
Consequently, setting $ \{a_m : m=1,\ldots,(2\domainc)^d\}:=\{0,\ldots,2\domainc -1\}^d $ and denoting
\begin{align*}
R_{j,m}:=\ggklam{Q_\jk \,:\, k\in a_m+2\domainc \bZ^d}\quad\text{for}\quad m=1,\ldots,(2\domainc)^d,
\end{align*}
we cover the whole range of cubes, i.e.,
\begin{align*}
\bigcup_{k\in\bZ^d} Q_{j,k} = \bigcup_{m=1}^{(2\domainc)^d} R_{j,m},
\end{align*}
and for any fixed $m\in\{1,\ldots,(2\domainc)^d\}$, if $Q_{j,k}, Q_{j,\ell} \in R_{j,m}$ with $k\neq\ell$, then $Q_\jk^\circ \cap Q_{j,\ell}^\circ = \emptyset$.
In the sequel, we write 
\begin{equation*}
R_{j,m}^\star:=\ggklam{k\in\Lambda_j^\star \, : \, Q_\jk\in R_{j,m}},\qquad m\in\ggklam{ 0,\ldots, (2\domainc)^d-1}.
\end{equation*}

\noindent\textbf{Step 3.} Let us fix $k\in\Lambda_j^\star$ and concentrate on the Besov semi-norm of the restriction of $u$ to the corresponding cube $Q_\jk^\circ$. 
Using the Peetre $K$-functional
\begin{equation*}
K_r(t,u,Q_\jk^\circ)_p
:=
\inf_{g\in W^r_p(Q_\jk^\circ)} \sggklam{\nnrm{u-g}{L_p(Q_\jk)}+t\, |g|_{W^r_p(Q_\jk^\circ)}}
\end{equation*} 
and applying \cite[Lemma 1]{JohSch1977} leads to
\begin{align*}
|u|_{B^\gamma_{p,p}(Q_\jk^\circ)}^p
&=
\int_0^\infty t^{-\gamma p} \omega^r(t,u,Q_\jk^\circ)_p^p\,\frac{dt}t\\
& \leq N
\int_0^\infty t^{-\gamma p} K_r(t^r,u,Q_\jk^\circ)_p^p\,\frac{dt}t\\
& = N
\int_0^\infty t^{-\gamma p} \inf_{g\in W^r_p(Q_\jk^\circ)}
\!\sggklam{\nnrm{u-g}{L_p(Q_\jk)}+t^r|g|_{W^r_p(Q_\jk^\circ)}}^p\,\frac{dt}t\\
& \leq N
\int_0^\infty t^{-\gamma p} \inf_{g\in W^r_p(Q_\jk^\circ)}
\!\sggklam{\nnrm{u-g}{L_p(Q_\jk)}^p+t^{rp}|g|_{W^r_p(Q_\jk^\circ)}^p}\,\frac{dt}t,
\end{align*}
where the constant $N$ depends only on $r$, $d$ and $p$. (Recall that $r$ is the smallest integer strictly greater than $\gamma$.)

\noindent\textbf{Step 4.} Now we collect the fruits of our work and approximate the right hand side of \eqref{eq:fancy-A1}. Because of the first step and since $k_1\geq 1$ it is easy to see that
\begin{equation*}
\Lambda_j^\star = \bigcup_{n=0}^{j} \bigcup_{m=1}^{(2\domainc)^d} S_{j,n}^\star \cap R_{j,m}^\star,
\end{equation*}
where
\begin{equation*}
S_{j,n}^\star:=\ggklam{ k\in\Lambda_j^\star \, : \, Q_\jk\in S_{j-n} },
\quad n\in\bN_0.
\end{equation*}
Thus,
\begin{equation}\label{eq:fancy-A2}
\sum_{k\in\Lambda_j^\star} 
\sgrklam{\rho_\jk^{\gamma-\nu} | u |_{B^\gamma_{p,p}(Q_\jk^\circ)}}^p
\leq
\sum_{n=0}^{j}
\sum_{m=1}^{(2\domainc)^d} 
\sum_{k\in S_{j,n}^\star\cap R_{j,m}^\star}
\sgrklam{\rho_\jk^{(\gamma-\nu) p} \,| u |_{B^\gamma_{p,p}(Q_\jk^\circ)}^p}.
\end{equation}
Let us fix $n\in\bN_0$ such that $S_{j,n}^\star\neq \emptyset$ as well as $m\in\{1,\ldots,(2\domainc)^d\}$. Then, $\rho_\jk\leq 2^{k_1}2^{-(j-n)}$ for $k\in S^\star_{j,n}$, and using the third step we obtain
\begin{align*}
&\sum_{k\in S_{j,n}^\star\cap R_{j,m}^\star} 
\sgrklam{\rho_\jk^{(\gamma-\nu) p} \,| u |_{B^\gamma_{p,p}(Q_\jk^\circ)}^p}	\\
&\quad \leq N
\sum_{k\in S_{j,n}^\star\cap R_{j,m}^\star}
\sgrklam{2^{-(j-n)(\gamma-\nu)p} 
\int_0^\infty t^{-\gamma p} \inf_{g\in W^r_p(Q_\jk^\circ)}
\sggklam{\nnrm{u-g}{L_p(Q_\jk)}^p+t^{rp}|g|_{W^r_p(Q_\jk^\circ)}^p}\,\frac{dt}t}	\\
&\quad \leq N
2^{-(j-n)(\gamma-\nu)p}
\sum_{k\in S_{j,n}^\star\cap R_{j,m}^\star} 
\int_0^\infty 
t^{-\gamma p}
\inf_{g\in W^r_p(\domain)}
\sggklam{\nnrm{u-g}{L_p(Q_\jk)}^p+t^{rp}|g|_{W^r_p(Q_\jk^\circ)}^p}
\,\frac{dt}{t}.	
\end{align*}
Furthermore, since $\xi_{j-n}=1$ on $Q_\jk$ for any $k \in S_{j,n}^\star$ and since $Q_\jk^\circ\cap Q_{j,\ell}^\circ=\emptyset$ for $k,\ell\in R_{j,m}^\star$ if $k\neq\ell$, we can continue our estimate as follows:
\begin{align*}
&\sum_{k\in S_{j,n}^\star\cap R_{j,m}^\star}
\sgrklam{\rho_\jk^{(\gamma-\nu) p} \,| u |_{B^\gamma_{p,p}(Q_\jk^\circ)}^p}	\\
&\quad \leq N
2^{-(j-n)(\gamma-\nu)p}
\int_0^\infty
t^{-\gamma p}
\inf_{g\in W^r_p(\domain)}
\sggklam{
\sum_{k\in S_{j,n}^\star\cap R_{j,m}^\star} 
\!\!\sgrklam{\nnrm{\xi_{j-n}u-g}{L_p(Q_\jk)}^p+t^{rp}|g|_{W^r_p(Q_\jk^\circ)}^p}}
\,\frac{dt}{t}	\\
&\quad \leq N
2^{-(j-n)(\gamma-\nu)p}
\int_0^\infty
t^{-\gamma p}
\inf_{g\in W^r_p(\domain)}
\sggklam{
\nnrm{\xi_{j-n}u-g}{L_p(\domain)}^p+t^{rp}|g|_{W^r_p(\domain)}^p}
\,\frac{dt}{t}	\\
&\quad \leq N
2^{-(j-n)(\gamma-\nu)p}
\int_0^\infty
t^{-\gamma p} 
K_r(t^r,\xi_{j-n} u,\domain)_p^p
\,\frac{dt}{t}.	
\end{align*}
By \cite[Theorem 1]{JohSch1977}, we know that there exists a constant $N$, depending only on $r$, $p$ and $\domain$, such that
\begin{equation*}
K_r(t^r,\xi_{j-n} u,\domain)_p 
\leq N\,
\omega^r(t,\xi_{j-n} u,\domain)_p.
\end{equation*}
Putting everything together, we have shown that there exists a constant $N$ which does not depend on $j$, $n$ or $m$ such that
\begin{align*}
\sum_{k\in S_{j,n}^\star\cap R_{j,m}^\star}
\sgrklam{\rho_\jk^{(\gamma-\nu) p} \,| u |_{B^\gamma_{p,p}(Q_\jk^\circ)}^p}	
& \leq N
2^{-(j-n)(\gamma-\nu)p}
\int_0^\infty
t^{-\gamma p} 
\omega^r(t,\xi_{j-n} u,\domain)_p^p
\,\frac{dt}{t}	\\
&= N 
2^{-(j-n)(\gamma-\nu)p}
| u |_{B^\gamma_{p,p}(\domain)}^p.
\end{align*}
Hence, \eqref{eq:fancy-A} follows after inserting this estimate into \eqref{eq:fancy-A2} and using \eqref{eq:fancy-A1}.
\end{proof}

\end{appendix}

\noindent\textbf{Acknowledgements.} The second and the third author are
thankful for the hospitality of the other authors during their stay at TU Dresden. The first and last author are deeply grateful for the hospitality of the other authors during their stay at the 2nd NIMS Summer School in Probability Theory in Daejeon and during their stay at Korea University.

\providecommand{\bysame}{\leavevmode\hbox to3em{\hrulefill}\thinspace}
\providecommand{\MR}{\relax\ifhmode\unskip\space\fi MR }
\providecommand{\MRhref}[2]{%
  \href{http://www.ams.org/mathscinet-getitem?mr=#1}{#2}
}
\providecommand{\href}[2]{#2}

\newpage
\section*{Affiliations \& Postal Addresses}
\noindent Petru~A.~Cioica \\
Philipps-Universit{\"a}t Marburg \\
FB Mathematik und Informatik, AG Numerik/Optimierung \\
Hans-Meerwein-Strasse \\
35032 Marburg, Germany \\
Phone: 00\,49\,6421\,28\,25\,481\\
E-mail: cioica@mathematik.uni-marburg.de \\[1ex]

\noindent Kyeong-Hun~Kim\\
Korea University\\
Department of Mathematics\\
Seoul, South Korea, 136--701\\
Phone: 00\,82\,2\,3290\,3071 \\
E-mail: kyeonghun@korea.ac.kr\\[1ex]

\noindent Kijung~Lee\\
Ajou University\\ 
Department of Mathematics\\ 
Suwon, South Korea, 443--749\\
Phone: 00\,82\,31\,219\,1936\\
E-mail: kijung@ajou.ac.kr\\[1ex]

\noindent Felix~Lindner\\
TU Dresden \\
FR Mathematik, Institut f{\"u}r Mathematische Stochastik \\
01062 Dresden, Germany \\
Phone: 00\,49\,351\,463\,32\,437\\
E-mail: felix.lindner@tu-dresden.de \\[1ex]

\end{document}